\newcommand{\R}{\mathbb{R}}
\newcommand{\N}{\mathbb{N}}
\newcommand{\Z}{\mathbb{Z}}
\newcommand{\Go}{\Gamma _0}
\renewcommand{\preceq}{\lesssim}
\newcommand{\set}[2]{\left\{ #1 \middle| #2 \right\}}
\newcommand{\EE}[1]{\mathbb{E}\left(#1\right)}
\newcommand{\E}{\mathbb{E}}
\newcommand{\Ec}[2]{\mathbb{E}\left[#1 \middle| #2 \right]}
\renewcommand{\P}{\mathbb{P}}
\newcommand{\PP}[1]{\mathbb{P}\left\{#1\right\}}
\newcommand{\m}{\varphi(m)}
\newcommand{\tip}[1]{\text{tip}\left(#1\right)}
\newcommand{\cov}{\mathcal{COV}}
\newcommand{\1}{\mathds{1}}
\newcommand{\pasha}[1]{\color{blue} #1 \color{black}}
\newcommand{\Viktor}[1]{\color{magenta} #1 \color{black}}
\definecolor{indigo}{rgb}{0.29, 0.0, 0.51}
\newcommand{\Viktortwo}[1]{\color{indigo} #1 \color{black}}
\newcommand{\EEp}{L_+^\infty(\R)}
\newcommand{\RR}{\R}
\newcommand{\normom}[1]{\lVert #1\rVert_\varpi}
\newcommand{\eps}{\varepsilon}
\newcommand{\la}{\lambda}
\newcommand{\Tau}{\Upsilon}
\newcommand{\inte}[2]{\int\limits_{#1}^{#2}}
\theoremstyle{plain}
\newtheorem{thm}{Theorem}[section]
\newtheorem{prop}[thm]{Proposition}
\newtheorem{lem}[thm]{Lemma}
\newtheorem{cor}[thm]{Corollary}
\theoremstyle{definition}
\newtheorem{defi}[thm]{Definition}
\newtheorem{rmk}[thm]{Remark}
\theoremstyle{definition}
\newtheorem{defn}[thm]{Definition}
\theoremstyle{remark}
\newtheorem{rem}[thm]{Remark}
\newcounter{assum}
\author[1]{Viktor Bezborodov \thanks{Email: \texttt{integral2012minus1@gmail.com}}} 
\author[1]{
Luca Di Persio \thanks{Email: \texttt{luca.dipersio@univr.it}}}
\author[2]{
 Tyll Krueger \thanks{Email: \texttt{tyll.krueger@pwr.wroc.pl}}}
\author[3]{
Pasha Tkachov \thanks{Email: \texttt{pasha.tkachov@gssi.it}}}
\affil[1]{
{The University of Verona}}
\affil[2]{
{Wroclaw University of Technology}}
\affil[3]
{Gran Sasso Science Institute, L'Aquila}
\title{Spatial  growth processes with long range dispersion:
	microscopics, mesoscopics, and discrepancy in spread rate
 }
\begin{document}

\maketitle

\begin{abstract}
 
 We consider the speed of propagation of a {continuous-time continuous-space}  branching random walk
 with the additional restriction that the birth rate at any spatial point cannot exceed $1$.
The dispersion kernel is taken to have density that decays polynomially as 
$|x|^{- 2\alpha}$, $x \to \infty$. We show that if $\alpha > 2$, then 
the system spreads at a linear speed, {while for $\alpha \in (\frac 12 ,2]$
the spread is faster than linear}. We also consider the mesoscopic equation
corresponding to the microscopic stochastic system. 
We show that in contrast to the microscopic process,
the solution to the mesoscopic equation spreads exponentially fast {for every $\alpha > \frac 12$}.

\end{abstract}

\textit{Mathematics subject classification}: 60K35, 60J80.

\textit{Keywords}: shape theorem, stochastic growth model, branching random walk,
mesoscopic equation, speed of propagation

\section{Introduction}

We analyze the truncated pure birth model introduced in  \cite{shapenodeath} on the subject of the speed of space propagation.
Our aim is to approach the question from the microscopic probabilistic as well as the mesoscopic point of views.
It turns out that the scaling significantly changes the behavior of the system: while the microscopic model grows linearly in time
provided the exponent is larger than four, the mesoscopic model spreads exponentially fast.

The limiting behavior of the branching random walk has been extensively studied. 
For an overview of  branching 
random walks and related topics, see e.g. 
\cite{Shi15}.
The asymptotic behavior of the
 position of the rightmost particle
of the branching random walk
under different assumptions
are given in \cite{Dur83}
and \cite{Dur79}, see also references therein.
A shape theorem for a one-dimensional discrete-space supercritical branching random walk with an exponential moment 
can be found in \cite{Big95}; \cite{Big97} contains further comments and extensions,
in particular for a multidimensional branching random walk. 
Further results and references on the branching random walk
with the focus on the position of rightmost  particle
 can be found in \cite{Big_branching_out}.
 More refined limiting properties
have been obtained recently, such as
the limiting law of the minimum or the limiting 
process seen from its tip or the asymptotics of the position of the minima of a branching random walk, 
see  \cite{aidekon2013convergence, aidekon2013branching, arguin2013extremal, Min_of_BRWs}.
For
maximal displacement of branching random walks in an environment 
see e.g. \cite{FZ12, Mal15} and references therein.
A branching random walk with a fixed number of particles 
is treated in \cite{BM14}, where asymptotic properties 
are obtained both in time and in the number of particles.
In \cite{Eth04}, conditions for the survival and extinction
of different versions of the Bolker--Pacala model are given.

Among asymptotic results for other stochastic models,
Blondel \cite{Blo13} proves a shape result and an ergodic theorem for the process viewed from the tip
for the East model.
A continuous-space set-valued stochastic growth model with the related shape theorem was given in   \cite{Dei03}.
The results have been extended in \cite{GM08}. 
The agent based model we treat in the present manuscript shares some features with this set based models.

The transition from the microscopic probabilistic models to macroscopic deterministic evolutions is a subject of several works, see e.g.  \cite{fournier2004microscopic, champagnat2008individual}.
Equations similar to those considered in the present paper appear in
\cite{brunet2007effect} during the analysis of the  rightmost particle of the Branching random walk.
Convolution with a probability density is often considered in biological and ecological models to describe a non-local interaction \cite{CDL2008,LMNC2003}.
Evolution equations involving convolution terms naturally appear as a limiting behavior of rescaled stochastic processes 
\cite{meleard2015some,durrett1988crabgrass,lebowitz1966rigorous,presutti2008scaling,finkelshtein2010vlasov}.
We do not give a formal derivation of the macroscopic model here,
however we show that the microscopic and macroscopic models
may 
have qualitatively different asymptotic growth rate when
the underlying geographic space
is not compact.
 This phenomenon can also be deduced for other models (see Remark \ref{obtuse}).

The main results
 are Theorems \ref{finite speed of propagation}, \ref{thm superlinear growth alpha < 2}, and \ref{thm:est_front}.
Theorem \ref{finite speed of propagation} states that the birth process with the birth rate given by \eqref{motile c} and \eqref{stampede} below propagates not faster than linearly if $\alpha >2$.
We give a proof for  the negative direction only  as the proof for the opposite  direction is identical due to symmetricity.
	Of course, Theorem \ref{finite speed of propagation} also applies to
any stochastic process dominated by the birth process defined in Section \ref{models, assu and res}, see Remark \ref{libel} for more detail.
Theorem \ref{thm superlinear growth alpha < 2} shows that when $\alpha <2$ the birth process does in fact spread faster than linearly.
In combination with Theorem \ref{finite speed of propagation} it allows us to conclude that $\alpha  = 2$
is a critical value for the birth proces defined by  \eqref{motile c} and \eqref{stampede}.  
 On page \pageref{Heuristics on why alpha = 2} 
two heuristic arguments are given on why one could expect the critical value to be two.
In contrast to the linear speed in the stochastic microscopic model for $\alpha >2$, Theorem \ref{thm:est_front} shows that the solution to the respective mesoscopic equation propagates exponentially fast.
Let us note that the effect is different 
for the models
 without restriction: a dispersion kernel with polynomially decaying tails gives
  exponentially fast propagation for both the rightmost particle of the branching random walk 
  (as shown in \cite{Dur83})
   and the unique solution to the corresponding mesoscopic equation (see \cite{FT2017, together_with_FT17, Gar11}).

The paper is organized as follows.
The models we consider, assumptions
and results are collected in 
Section \ref{models, assu and res}.
Proofs of the main results,
Theorems \ref{finite speed of propagation}, \ref{thm superlinear growth alpha < 2},
and \ref{thm:est_front},
are contained in Sections \ref{section micro discr} and \ref{section micro cont},
\ref{section micro alpha less than 2} and \ref{section micro cont},
 and \ref{section meso}, respectively.
 Sections \ref{section micro discr} and \ref{section micro alpha less than 2}
 are devoted to the discrete-space version of the birth process.
Section \ref{section meso} also contains
a remark on heuristic 
connection between the microscopic
and mesoscopic  models.

\section{The model, assumptions and results}\label{models, assu and res}

Let $\Go$ be the collection of
 subsets of finite number of points in $\R^1$,
\[
\Gamma _0(\R ^1)=\{ \eta \subset \R ^1 : |\eta| < \infty \},
\]
where $|\eta|$ is the number of elements in $\eta$.
Let also $b: \R ^1 \times \Go \to \R _+$ be the birth rate
\begin{equation}\label{motile c}
b(x,\eta) = 1 \wedge \left(  \sum\limits _{y \in \eta } a (x-y) \right),
 \ \ \ x \in \R, \eta \in \Gamma _0(\R ^1)
\end{equation}
with
\begin{equation}\label{stampede}
a(z) = \frac{c_\alpha}{(1+|z|^2)^{ \alpha}},\quad z \in \R,
\end{equation}
where $\alpha > \frac 12$ and $c_ \alpha >0$ is such that $\int_{\R} a(z) dz = 1$.
The time evolution can be imagined as follows.
We denote the state of the process at time $t$ by $\eta_t \in \Go$.
If the state of the system is $\eta \in \Go$,
then the rate at which a birth occurs in a bounded Borel set $B$ is $\int_B b(x, \eta)dx$,
that is, the probability 
that a new particle appears (a ``birth'') in 
a bounded set $B\in \mathscr{B}(\R ^1)$
over time interval $[t;t+ \Delta t]$ is 
\[
\Delta t \int\limits _{ B}b(x, \eta)dx + o(\Delta t),
\]
More details can be found in \cite{shapenodeath}. Note that 
the birth rate without restriction 
$$\bar b(x,\eta) = \sum\limits _{y \in \eta } a (x-y)$$
corresponds to a continuous-space branching random walk. 

\begin{thm}\label{finite speed of propagation}
	Assume that $\alpha >2$.
	For the continuous-space birth process $(\eta _t)_{t\geq 0}$ with birth rate 
	\eqref{motile c}
	and initial condition $\eta _0 = \{0\}$
	 there exists  a constant
	$\mathrm{C} _{\alpha} >0$ such that a.s.
	for sufficiently large $t$,
	\begin{equation}
	\eta _t \subset [- \mathrm{C}_{\alpha} t, \mathrm{C}_{\alpha } t].
	\end{equation}
\end{thm}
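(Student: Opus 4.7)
The plan is to bootstrap a linear confinement of the process, leveraging the essential fact that the per-point birth rate is capped at $1$. Fix a large constant $M > 0$ and consider, for each integer $n$, the event
\[
A_n = \{\eta_s \subset [-Ms, Ms] \text{ for all } s \leq n+1\}.
\]
The goal would be to show that, for a suitable constant $C' > 0$ and a summable sequence $(p_n)_{n \geq 1}$,
\[
\P\bigl(A_n \cap \{\text{some birth in } (-\infty, -(M+C')n] \text{ occurs during } [n, n+1]\}\bigr) \leq p_n.
\]
Since only the left endpoint needs to be controlled (the right is symmetric), a Borel--Cantelli argument would then yield $\eta_t \subset [-\mathrm{C}_\alpha t, \mathrm{C}_\alpha t]$ almost surely for large $t$ with, e.g., $\mathrm{C}_\alpha = M + C' + 1$.

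Underpinning the main estimate is an a priori polynomial bound on the population. On $A_n$ the bulk is confined to a window of width $O(n)$ and, since $b(\cdot,\eta)\leq 1$ pointwise, the total birth intensity at any time $s \leq n+1$ is at most $O(n)$; a standard moment/Poisson concentration estimate then gives $|\eta_s| \leq N_n := C_1 n^{2+\epsilon}$ with probability $1-o(n^{-2})$ for any fixed $\epsilon > 0$.

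The key computation is the bound on the intensity of long-range births. On $A_n \cap \{|\eta_s| \leq N_n\}$ and for $z < -(M+C')n$, the distance from $z$ to the nearest particle of $\eta_s$ is at least $C'n - O(1)$, so using the trivial pointwise inequality $b(z, \eta_s) \leq 1 \wedge |\eta_s|\, a(\mathrm{dist}(z, \eta_s))$ together with the polynomial tail $a(u) \lesssim u^{-2\alpha}$,
\begin{equation*}
\int_{-\infty}^{-(M+C')n} b(z, \eta_s)\, dz \;\lesssim\; N_n \int_{C'n}^{\infty} u^{-2\alpha}\, du \;\lesssim\; n^{2+\epsilon}\cdot n^{1-2\alpha} \;=\; n^{3+\epsilon-2\alpha}.
\end{equation*}
The hypothesis $\alpha > 2$ is precisely what makes $3-2\alpha < -1$, so choosing $\epsilon$ small enough keeps the exponent below $-1$ and the corresponding $p_n$ summable. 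Integrating this intensity over $s \in [n, n+1]$ and invoking the Poissonian structure of the birth mechanism upgrades the expected-intensity bound into the probability bound required for Borel--Cantelli.

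The main obstacle is the circularity of the scheme: the long-range birth estimate is valid only under the confinement hypothesis $A_n$, which is itself what the argument aims to establish. I would resolve this via a stopping-time / graphical-construction argument, running the process until the first violation of the confinement and observing that, up to that stopping time, the intensity bound applies tautologically; Borel--Cantelli then propagates the confinement from $n$ to $n+1$. A secondary technical point is promoting the expectation-level bound $\mathbb{E}|\eta_s| \lesssim s^2$ to a high-probability bound strong enough to feed into Borel--Cantelli, which is handled by the enlargement to $n^{2+\epsilon}$ above together with concentration for the underlying Poisson birth measure. The symmetric bound for the right tip follows verbatim from the symmetry of $a$, as the authors remark in the statement.
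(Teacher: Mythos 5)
You take a genuinely different route from the paper. The paper passes to a discrete-space model, views the process from its left tip, and decomposes the tip position as $X_t = Q_t + M_t$, a drift plus a local martingale; the drift is tamed by an \emph{unconditional} coupling that dominates the view-from-the-tip configuration $(\xi_t)$ by a simple shifted array of independent Poisson fields $(\zeta_t)$, and the martingale is handled via Novikov's (Bichteler--Jacod's) inequality plus a strong law of large numbers for martingales, after which a separate coupling transfers the discrete result to the continuum. Your far-tail estimate $\int_{|z|>(M+C')n} b(z,\eta_s)\,dz \lesssim N_n\,n^{1-2\alpha}$ is correct and does single out $\alpha=2$ as the threshold.

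However the scheme has a gap, and I think it is more structural than the bootstrapping circularity you already flag. Controlling only the far births is not enough: the crude pointwise bound $b(z,\eta)\le 1\wedge|\eta|\,a(\mathrm{dist}(z,\eta))$ leaves the drift of the left endpoint uncontrolled. Indeed, for a configuration with $N$ particles concentrated near the left boundary $L_t$, this bound gives an expected tip drift of
\[
\int_0^\infty u\bigl(1\wedge N\,a(u)\bigr)\,du \;\asymp\; N^{1/\alpha},
\]
and plugging in the polynomial population bound $N\asymp t^2$ would give drift $\asymp t^{2/\alpha}$, hence a boundary position of order $t^{1+2/\alpha}$ --- superlinear for every finite $\alpha$. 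The far-tail Borel--Cantelli estimate does not exclude this scenario. What actually makes the speed linear for $\alpha>2$ is that the per-site cap $b\le 1$ forces the occupation numbers relative to the tip to grow like $\xi(k)\lesssim k$, not like $|\eta_t|$ concentrated at the tip; substituting $\xi(k)\asymp k$ into the drift gives $\sum_m m\bigl(1\wedge\sum_k k\,(m+k)^{-2\alpha}\bigr)\asymp\sum_m m^{3-2\alpha}$, which is finite precisely when $\alpha>2$. This spatial profile near the tip is exactly what the paper's domination by $(\zeta_t)$ encodes, and it cannot be recovered from $|\eta_s|$ alone. A secondary issue: as written, the events $A_n$ require confinement up to time $n+1$, which makes $A_n\cap\{\text{far birth in }[n,n+1]\}$ vacuous for $n>M/C'$; you presumably mean confinement up to time $n$. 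But even with that fixed, Borel--Cantelli applied to ``first escape during $[n,n+1]$'' only gives $\P(\text{no escape ever})>0$, and upgrading this to almost-sure eventual confinement requires restarting from the random post-escape configuration --- again the same circularity. Both problems are sidestepped in the paper by making the comparison to $(\zeta_t)$ unconditional.
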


\begin{rmk}
	As is the case for many shape theorems for  growth models, 
	Theorem \ref{finite speed of propagation} holds true 
	for any initial condition  $\eta _0 \in \Gamma _0(\R ^1) $.
	Also, the upper bound in \eqref{motile c}
	does not have to be $1$, it can be any positive constant.
\end{rmk}

\begin{rmk}\label{shape thm}
	In fact, 
	analyzing the proof 
	of the shape theorem in
	 \cite{shapenodeath},
	 we can obtain a stronger result 
	 for 
	 the one dimensional continuous-space birth process with birth rate 
	 satisfying
	\begin{equation*}
	b(x,\eta) \leq C _{b}  \wedge \left( C _{b} \sum\limits _{y \in \eta } a (x-y) \right),
	\end{equation*}
	for some constant $C _{b}> 0$,
	provided that certain additional conditions
	are satisfied (monotonicity, translation and rotation invariance, 
	and non-degeneracy as defined in \cite{shapenodeath}).
	Specifically, 
	 there exists  a constant
	$\lambda >0$ such that for every $\varepsilon >0$ a.s.
	for sufficiently large $t$ both
	\begin{equation}\label{shape up}
	\eta _t \subset [- \lambda (1+ \varepsilon) t, \lambda (1+ \varepsilon)  t]
	\end{equation}
	and 
	\begin{equation} \label{shape down}
	 \bigcup\limits _{x \in \eta _t} [x-1,x+1] \supset [- \lambda (1- \varepsilon) t, \lambda (1- \varepsilon)  t]
	\end{equation}
hold true.
In particular, \eqref{shape up} 
and \eqref{shape down} hold 
for $b$ defined in \eqref{motile c}
and \eqref{stampede}.
Note that such $b$
does not satisfy
Condition 2.1 from \cite{shapenodeath},
however Condition 2.1 from that paper is only used to
establish that the growth is at most linear,
which we do in a different way in Theorem \ref{finite speed of propagation}. 

\end{rmk}

\begin{rmk}
	Theorem \ref{finite speed of propagation} can be compared 
	with the  result of Durrett \cite{Dur83},
	which shows that we observe  
	an exponential growth for the maximal displacement of a branching random walk with polynomially decreasing 
	dispersion kernel.
	A related result for a branching random walk with
	dispersion kernel satisfying certain semiexponential conditions can be found in 
	\cite{Gan00}. Semiexponential kernels in \cite{Gan00}
	satisfy 
	\[
	\P \{Y \geq t\} = l(t) \exp(-L(t)t^r)
	\]
	for $t$ sufficiently large, where $Y$ is a random variable 
	distributed as displacement of the offspring from the parent, $r \in (0,1)$,
	$l$ and $L$ are slowly varying functions, and $L(t)/t^{1-r}$
	is non-increasing for large $t$. The spread rate for a branching 
	random walk with such a displacement kernel 
	 is given in \cite{Gan00} explicitly. 
	The system grows faster than linearly; for some choices 
	of $L$ the spread rate is polynomial.	  
	For Deijfen's model of a randomly growing set,
	 Gou{\'e}r{\'e} and Marchand \cite{GM08}
	give a sharp condition
	on the distribution of the outburst radii
	for linear or
	superlinear growth (i.e. faster than linear).

\end{rmk}

\begin{rmk}
	In the language of statistical physics,
	Theorem \ref{finite speed of propagation}
	means that our model 
	exhibits
	 the directed percolation (DP) 
	 class properties	 
	while  having longe-range interaction,
	see e.g. \cite[Section 6.7 and elsewhere]{marro_dickman_1999},
	\cite{ long-range_CP_physics, Hin00, Geza04}.

\end{rmk}

\begin{rmk}\label{libel}
	As noted in the introduction, Theorem \ref{finite speed of propagation} also applies to
	 any stochastic process dominated by the birth process with birth rate \eqref{motile c}.
	 In particular, the statement holds true if every particle is removed 
	 after an exponential time with mean $\delta ^{-1}$, that is,
	 if each particle also has a death rate equal to $\delta$. 
\end{rmk}

The next result shows that the condition $\alpha > 2$ in Theorem \ref{finite speed of propagation}
is sharp. The system exhibits a superlinear spread rate when $\alpha \leq 2$.

\begin{thm}\label{thm superlinear growth alpha < 2}
	Assume that $\alpha \in (\frac 12, 2]$. Then $(\eta _t)$ grows faster than linearly 
	in the sense that for any $K_0, K_1 > 0$,
	\begin{equation}\label{superlinear growth formula}
	\PP{ \eta _t \subset [-K_0-K_1 t , K_0 +  K_1 t ] \text{ for sufficiently large t} } = 0.
	\end{equation}
	
\end{thm}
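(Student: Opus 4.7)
The plan is to argue by contradiction. Fix $K_0, K_1 > 0$, set $I_t := [-K_0 - K_1 t, K_0 + K_1 t]$, and write $F = \{\eta_t \subset I_t \text{ for all sufficiently large } t\}$; assume $\P(F) > 0$. The idea is to construct, for each $n\in \N$, an escape event $A_n$ that is incompatible with $F$, and whose conditional probabilities on $F$ sum to $+\infty$. The conditional Borel--Cantelli lemma will then give a contradiction.

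\textbf{Reduction and population bound.} Following the approach of Sections \ref{section micro discr} and \ref{section micro alpha less than 2}, I would first pass via a monotone coupling to a discrete-space analogue of $\eta_t$ on $\Z$. The main quantitative input is the almost-sure lower bound $|\eta_t| \gtrsim t^2$ for large $t$, which has two stages. First, since $0 \in \eta_t$ for all $t$ and $b$ is monotone in $\eta$, the total birth rate satisfies $\int b(x, \eta_t)\, dx \geq \int (1 \wedge a(x))\, dx =: c_0 > 0$, giving $|\eta_t| \gtrsim t$ a.s. Secondly, a one-particle analysis of the rightmost (resp.\ leftmost) particle gives $M_t \gtrsim t$ and $-m_t \gtrsim t$; combined with the saturation of $b$ at $1$ in the occupied region, this shows that $\int b(x, \eta_t)\, dx$ is of order of the range $M_t - m_t \gtrsim t$. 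Solving $d|\eta_t|/dt \gtrsim t$ then yields $|\eta_t| \gtrsim t^2$ a.s.

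\textbf{Escape events.} For each $n$, let $L_n := n \log n$ and let $A_n$ be the event that at least one birth occurs in the time window $[n, 2n]$ at some position $x$ with $|x| > L_n$. Using once more the inequality $b(x, \eta_s) \geq \sum_{y \in \eta_s} a(x-y)$, valid at points $x$ with $|x| > L_n$ (where the sum is $\leq 1$ for $n$ large), the conditional intensity of escapes given $\mathcal{F}_n$ integrates to
\[
\int_n^{2n} |\eta_s| \int_{L_n}^\infty a(z)\, dz\, ds \;\gtrsim\; n^3\,(n \log n)^{1-2\alpha}.
\]
For $\alpha \leq 2$ this lower bound is either a positive power of $n$ (when $\alpha < 2$) or of order $(\log n)^{-3}$ (when $\alpha = 2$); in either case $\sum_n \P(A_n \mid \mathcal{F}_n) = +\infty$ almost surely on $F$. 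The conditional Borel--Cantelli lemma then forces $A_n$ to occur infinitely often a.s.\ on $F$, and since $L_n > K_1 \cdot 2n + K_0$ for large $n$, this contradicts the defining condition of $F$. Hence $\P(F) = 0$.

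\textbf{Main obstacle.} The delicate step is the quadratic population bound $|\eta_t| \gtrsim t^2$. Upgrading from the linear bound $|\eta_t| \gtrsim t$ to the quadratic one requires showing, uniformly on $F$, that a positive fraction of the range $[m_t, M_t]$ is ``saturated''---i.e., $\sum_{y \in \eta_t} a(x-y) \geq 1$ at a positive density of sites $x$. This needs a careful local analysis, and in particular the range estimate $M_t - m_t \gtrsim t$ must be coupled to the density estimate without circularity. The boundary case $\alpha = 2$ is especially sensitive: the conditional probabilities $\P(A_n \mid \mathcal{F}_n)$ are then only of order $(\log n)^{-3}$, so any weakening of the population bound would ruin the divergence of the series.
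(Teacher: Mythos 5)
Your overall strategy mirrors the paper's: establish a superlinear population bound, estimate the rate of births landing beyond the target interval, then apply a Borel--Cantelli argument. The escape-rate computation and Borel--Cantelli step are sound in outline, though your choice $L_n = n\log n$ is needlessly wasteful --- the paper in effect works with a threshold of order $K_1 n$, which at $\alpha = 2$ gives an escape intensity bounded below by a constant rather than by $(\log n)^{-3}$, so the boundary case is not at all delicate if the threshold is chosen tightly. The real problem, however, is exactly the step you yourself flag as the ``main obstacle'': the quadratic population bound. Your two-stage sketch does not work as written. The claim that $M_t - m_t \gtrsim t$ plus ``saturation in the occupied region'' gives $\int b(x,\eta_t)\,dx \gtrsim M_t - m_t$ is unjustified without a density statement: the rightmost particle can make a long-range jump, so $M_t$ can be large while the occupied set inside $(m_t, M_t)$ is sparse and unsaturated, in which case the total birth rate is nowhere near the range. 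You correctly note this requires ``a careful local analysis \ldots without circularity,'' but you do not supply it, and everything downstream depends on it.

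The paper's Lemma~\ref{slovenly} supplies exactly the missing ingredient, and the mechanism is worth absorbing. Instead of tracking the rightmost particle of $(\eta_t)$ directly, the paper first reduces to the lattice (Proposition~\ref{spastic}) and then couples $(\eta_t)$ from below with a nearest-neighbor growth process $(\gamma_t)$ with rate $b^{(\gamma)}(x,\eta) = \1\{\eta(x) + \eta(x-1) > 0\}$, which satisfies $b^{(\gamma)}\leq b^{(d)}$. This comparison process has two clean features the true process lacks: its occupied set is almost surely a \emph{contiguous} interval $\{0,\dots,N_t\}$ whose right endpoint $N_t$ is an honest Poisson process (so linear range growth and density come for free), and every occupied site, once occupied, accumulates particles at rate exactly $1$. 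A Poisson large-deviation estimate and Borel--Cantelli then give the deterministic rectangle bound $\eta_t(x) \geq t/10$ for $x \in [0, t/4]\cap\Z$, a.s.\ for large $t$. Because the lower bound $\zeta_t$ is then a deterministic configuration, the escape counts $F_n$ of Theorem~\ref{thm superlinear growth alpha <2 discrete} are \emph{independent} Poisson random variables, so the standard (not conditional) second Borel--Cantelli lemma closes the argument. To make your proof rigorous you would need to reproduce essentially this coupling; a one-particle analysis of the rightmost particle does not give contiguity, and your conditional-Borel--Cantelli framing, while valid in principle, is an unnecessary complication once the deterministic lower bound is in hand.
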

Put differently, \eqref{superlinear growth formula} means that 
any $K_0, K_1 > 0$ the set
	\begin{equation*}
 \Big\{t: \eta_{t} \setminus [-K_0-K_1 t , K_0 +  K_1 t ]  \ne \varnothing\Big\} \subset [0, \infty)
\end{equation*}
 is a.s. unbounded.

 A mesoscopic approximation of the point process $(\eta_t)_{t\geq 0}$ is given by the following evolution equation 
\begin{equation}\label{eq:basic_meso_intro}
\begin{cases}
\begin{aligned}
\dfrac{\partial u}{\partial t}(x,t) =& \ \min\{ \int_{\mathbb{R}} a(x-y)u(y,t)\,dx, 1 \}, &\qquad  &x\in\R,\ t\in (0,\infty),\\
u(x,0) =& u_0(x), &\qquad 	&x\in\R.
\end{aligned}
\end{cases}
\end{equation}
where $a$ is defined by \eqref{stampede}.

It turns out that the mesoscopic model shows a very different behavior.
No matter how large  $\alpha > \frac 12$ is in \eqref{stampede},
the speed of propagation is faster than linear as we
see in Theorem \ref{thm:est_front} which states that the solution to \eqref{eq:basic_meso_intro} propagates exponentially fast.
Moreover, solutions with roughly speaking `monotone' initial conditions (case 2) propagate faster than solutions with `integrable' initial conditions (case 1). 
\begin{thm}\label{thm:est_front}
	Let ${0\leq u_0\in L^\infty(\R)}$  and $u=u(x,t)$ be the corresponding classical solution to \eqref{eq:basic_meso_intro} with $a(x)$ defined by \eqref{stampede}. 
	Then, for any $\eps\in(0,1)$, $n\geq1$, there exists $\tau=\tau(\eps,n)$ such that the following inclusions hold
	\begin{enumerate}
		\item If there exists $C>0$ such that $u_0(x) \leq Ca(x)$, $x\in\RR$, and there exist $\mu>0$, $x_0\in \R$, such that $u_0(x) \geq \mu$, $x\in [x_0-\mu, x_0+\mu]$, then for all $t\geq \tau$,
		\begin{equation}\label{eq:front_position_u_0_integrable}
		\{ x: u(x,t)\in [\tfrac{1}{n},n] \} \subset \{x:  e^{\frac{1-\eps}{2\alpha}t} \leq |x| \leq e^{\frac{1+\eps}{2\alpha}t} \}.
		\end{equation}
		\item If there exists $C>0$ such that $u_0(x) \leq C \inte{x}{\infty} a(y)dy$, $x\in\RR$, and there exist $\mu>0,\ \rho\in\R,$ such that $u_0(x) \geq \mu,\ x\leq \rho,$ then for all $t\geq \tau$, 
		\begin{equation}\label{eq:front_position_u_0_decreasing}
		\{ x: u(x,t)\in [\tfrac{1}{n},n] \} \subset \{x : e^{\frac{1-\eps}{(2\alpha-1)}t} \leq x \leq e^{\frac{1+\eps}{(2\alpha-1)}t} \}.
		\end{equation}
	\end{enumerate}
\end{thm}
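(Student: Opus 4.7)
I would sandwich $u$ between solutions of the linear dispersion equation $v_t = a*v$, whose spreading rate is driven by the heavy polynomial tails of $a$. The linear solution with $v(\cdot,0)=u_0$ admits the convolution series $v(x,t)=\sum_{n\geq 0}(t^n/n!)(a^{*n} * u_0)(x)$, so both inclusions reduce to the tail behaviour of the iterated convolutions $a^{*n}$. For the upper inclusion, the inequality $u_t=\min\{a*u,1\}\leq a*u$ gives $u\leq v$ by comparison, and a Kesten-type subexponentiality bound provides, for every $\delta>0$, constants $K,R_0>0$ with
\begin{equation*}
a^{*n}(x)\leq K(1+\delta)^n a(x),\qquad |x|\geq R_0,\ n\geq 0,
\end{equation*}
together with the integrated counterpart $\int_x^\infty a^{*n}(y)\,dy\leq K(1+\delta)^n\int_x^\infty a(y)\,dy$. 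In case~1, combining with $u_0\leq Ca$, the series sums to $v(x,t)\leq C'e^{(1+\delta)t}|x|^{-2\alpha}$, so $u(x,t)\geq 1/n$ forces $|x|\leq (nC')^{1/(2\alpha)}e^{(1+\delta)t/(2\alpha)}\leq e^{(1+\eps)t/(2\alpha)}$ once $t$ is large and $\delta<\eps$. In case~2, writing $U_0(x):=\int_x^\infty a(y)\,dy$, the identity $(a^{*n} * U_0)(x)=\int_x^\infty a^{*(n+1)}(y)\,dy$ together with the tail Kesten bound yields $v(x,t)\leq C''e^{(1+\delta)t}x^{-(2\alpha-1)}$ for large $x$, hence the upper inclusion with exponent $1/(2\alpha-1)$.

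For the lower inclusion the truncation plays a positive role. Since $u_t\geq 0$, the map $s\mapsto a*u(x,s)$ is non-decreasing, so once it reaches $1$ it stays there; writing $\tau(x):=\inf\{s\geq 0: a*u(x,s)\geq 1\}$, the equation gives $u_t(x,s)=1$ for $s\geq \tau(x)$ and therefore
\begin{equation*}
u(x,t)\geq t-\tau(x),\qquad t\geq \tau(x).
\end{equation*}
It thus suffices to prove $\tau(x)\leq 2\alpha\log|x|+O(\log t)$ in case~1 (resp.\ $\tau(x)\leq (2\alpha-1)\log x+O(\log t)$ in case~2), since plugging in $|x|\leq e^{(1-\eps)t/(2\alpha)}$ then gives $u(x,t)\geq \eps t-O(\log t)\geq n$ for $t$ large. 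Using the non-degeneracy hypothesis I would replace $u_0$ by $\mu\,\1_{[x_0-\mu,x_0+\mu]}$ (resp.\ $\mu\,\1_{(-\infty,\rho]}$) and work with the sub-solution $\underline v_t=a*(\underline v\wedge 1)$, so that $u\geq \underline v$ by comparison. The long-tail asymptotic $a*f(x)\sim a(x)\int f$ as $|x|$ exits the bulk of $f$, applied with $f=\underline v(\cdot,s)$ together with a lower bound $\int \underline v(\cdot,s)\gtrsim e^s$ valid before global saturation, produces $a*u(x,s)\gtrsim e^s a(x)\gtrsim e^s|x|^{-2\alpha}$; requiring this to exceed $1$ yields $|x|\lesssim e^{s/(2\alpha)}$, whence $\tau(x)\lesssim 2\alpha\log|x|$. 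Case~2 runs in parallel with $a$ replaced by $U_0$ and $2\alpha$ by $2\alpha-1$.

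The hard part will be the \emph{uniformity} of the Kesten-type bounds over the moving window: the front itself sits at $|x|\sim e^{t/(2\alpha)}$ which grows with $t$, so the constants $K$ and $R_0$ must be chosen so that the inequality $a^{*n}(x)\leq K(1+\delta)^n a(x)$ remains valid across this range, and the polynomial and logarithmic prefactors generated by summing the series must all be absorbed into the arbitrary $\eps$. A parallel subtlety is the lower-bound input $\int \underline v(\cdot,s)\gtrsim e^s$: one must show that the saturation set of $\underline v$ stays sufficiently localized for mass production not to be extinguished before the front has advanced to $|x|\sim e^{s/(2\alpha)}$, which requires quantitative control of the spatial profile of $\underline v$ across $[-e^{s/(2\alpha)},e^{s/(2\alpha)}]$.
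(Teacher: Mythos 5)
Your upper-bound route is plausible and genuinely different from the paper's: you compare with the linear flow $v_t=a*v$ and sum the convolution series via a Kesten-type bound $a^{*n}(x)\leq K(1+\delta)^n a(x)$ for $|x|\geq R_0$, whereas the paper (Lemmas \ref{lem:a_conv_agamma_equiv_agamma}, \ref{lem:ba:suff_cond1} and Proposition \ref{prop:ba:bound_above_gen}) builds the weight $\omega_\la=\min\{\la,a^\gamma\}$ with $\gamma\in(\tfrac{1}{2\alpha},1)$ satisfying $a*\omega_\la\leq(1+\delta)\omega_\la$ and runs a Gr\"onwall argument in the $\|\cdot\|_{\omega_\la}$ norm. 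These are essentially two ways of packaging the same subexponentiality of $a$, and either would close case~1; for case~2 your identity $(a^{*n}*U_0)(x)=\int_x^\infty a^{*(n+1)}$ mirrors the paper's step of integrating the weight.

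The lower bound, however, has a genuine gap, and it is not a technical one. You invoke ``$\int \underline v(\cdot,s)\gtrsim e^s$'' and then bound $a*u(x,s)\gtrsim a(x)\int u$. But for the truncated flow the total mass does \emph{not} grow exponentially: since $\frac{d}{dt}\int\underline v=\int(\underline v\wedge1)\leq 1\cdot|\{\underline v\geq1\}|+\int_{\{\underline v<1\}}\underline v$, and the saturation set has width of order $e^{(1-\eps)s/(2\alpha)}$ (this is exactly what the theorem asserts), one finds $\int u(\cdot,s)\asymp e^{s/(2\alpha)}$, not $e^s$. Feeding the correct mass into your estimate gives $a*u(x,s)\gtrsim|x|^{-2\alpha}e^{s/(2\alpha)}$, hence ``$\tau(x)\lesssim 4\alpha^2\log|x|$'' and a front at $|x|\sim e^{t/(4\alpha^2)}$ -- strictly slower than the claimed $e^{t/(2\alpha)}$. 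The reduction $u(x,t)\geq t-\tau(x)$ is correct and pleasant, but the mass-based control of $\tau(x)$ is intrinsically lossy: it only sees $\int u$, not the spatial coherence of $u$. What actually closes the argument in the paper is the construction of explicit subsolutions with the \emph{right profile}, $g(x,t)=\min\{1,|x|^{-2\alpha}e^{(1-\eps)t}\}$ and $h(x,t)=\1_{\R_-}(x)+\min\{1,x^{-2\alpha+1}e^{(1-\eps)t}\1_{(0,\infty)}(x)\}$ (Lemma \ref{lem:subsol}): the inequality $g_t\leq a*g$ is verified \emph{pointwise} by exploiting that $a*g(x)$ collects contributions from nearby $y$ where $g(y)\approx g(x)$ (so one gains a factor close to $\int a=1$, not a factor $\int g$), after which the hair-trigger effect (Lemma \ref{lem:hair_trigger_effect}) combined with shift-commutativity (Corollary \ref{cor:Z_Q_are_commutative}) amplifies the subsolution threshold $\la$ to an arbitrary level $n$. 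You would need to replace your mass heuristic by a subsolution of this form; without it the lower inclusion in \eqref{eq:front_position_u_0_integrable}--\eqref{eq:front_position_u_0_decreasing} does not follow.
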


\begin{rmk}
	We use the term `mesoscopic approximation' here instead of `macroscopic approximation', even though some authors might use the latter to describe \eqref{eq:basic_meso_intro}.
	We follow here \cite{presutti2008scaling}; see also \cite{lachowicz2011microscopic}, \cite{saragosti2010mathematical} for discussions of microscopic, mesoscopic and macroscopic descriptions of complex systems.
\end{rmk}

\begin{rmk}\label{obtuse}
	Remark \ref{libel} can also be contrasted with the spread rate of
	the system driven by the equation
	\begin{equation} \label{expiate}
	u_t  = J * u - u + f(u)
	\end{equation}
	where $J$ is the dispersion kernel, $\|J\|_{L^1} = 1$, 
	and $f:[0,1] \to \R _+$ is some differentiable function with $f(0)= f(1) = 0$ and $f'(0)> 0$,
	and certain other mild conditions.
	It is shown in \cite{Gar11} that the solution 
	to \eqref{expiate} has level sets moving faster than linearly.
	We note that since the solution to \eqref{expiate} takes values between $0$ and $1$ (provided
	that the initial condition lies between $0$ and $1$; see \cite{Gar11}), we have $J * u \leq 1 $,
	and hence \eqref{expiate} can be written as 
	\begin{equation}
	u_t  = \min \{1, J * u \} - u + f(u).
	\end{equation}

\end{rmk}

\emph{Notation and conventions}.
Let $\R_+ = [0, \infty)$, $\R_- := (-\infty,0]$ and $\Z_+ = \{m \in \Z: m \geq 0\}$.
For processes indexed by  $\R _+$ (which represents  time) we will use  $( X_t)$ as a  shorthand for $( X_t)_{t \geq 0}$ or $ \{X_t, t\geq 0 \}$.
For a Poisson process $(N_t)$, $0<a\leq b$, $N(a,b] = N_b - N_a$ and $N(\{a\}) = N_{a} - N_{a-}$. \label{n in a point}
For $a, b \in \R$, $a_+ = \max \{a, 0 \}$, $a\vee b = \max \{a,b\}$, $a\wedge b = \min \{a,b\}$.
Concening the operation order, 
we take
for $a, b, c \in \R$, $- a \wedge b = - (a \wedge b)$, $ab \wedge c = (ab)\wedge c$,
and the same rules for $\vee$.
$\mathrm{Cov}(X, Y)$ and $\mathrm{Var}(X)$ denote the covariance between $X$ 
and $Y$ and variance of $X$, respectively.
$\1$ is an indicator, for example 
\[
 \1 \{x \geq 0\} = 
 \begin{cases}
  1 \text{ if } x\geq 0, \\
  0 \text{ if } x< 0.
 \end{cases}
\]
Throughout the paper, $C$ denotes different universal constants whose exact values are irrelevant.
Even in the concatenation
\[
 F \leq C G \leq C H,
\]
where $F, G$, and $H$ 
are some expressions, two occurrences 
of $C$ may have different values.
We set $B_r(x) = \{y\in\RR\, \vert \, |x-y|\leq r\}$ and $B_r = B_r(0)$.
For simplicity of notations we will write $``x\in\RR"$ instead of $``a.e.\,\,x\in\RR"$ for the elements of $L^\infty(\R)$.
We denote 
\begin{equation}\label{def:Lp_plus}
\EEp = \{ f \in L^\infty(\R) \vert f(x) \geq 0,\ x\in \RR;\ \exists\delta>0,\ x_0\in\R:\ f(x)\geq\delta,\ x\in B_\delta(x_0) \}.
\end{equation}
We will write for $f_1,f_2\in L^\infty(\R)$, $A\subset \RR$,
\[
	f_1(x) \preceq f_2(x),\quad x\in A,
\]
if there exists $c>0$ such that $f_1(x) \leq c f_2(x),\ x\in A$.
For $p\in[1,\infty]$, $\|\cdot\|_p := \|\cdot\|_{L^p(\R)}$.

\emph{A very brief outline of the proof of Theorem \ref{finite speed of propagation}}.
The proof of  Theorem \ref{finite speed of propagation} is split across 
Sections \ref{section micro discr} and \ref{section micro cont}. 
The main bulk of the proof is carried out in  
 Section \ref{section micro discr}, where we prove the equivalent of 
Theorem \ref{finite speed of propagation} for the case when the underlying 
`geographical' space is discrete $\Z ^1$ rather than continuous $\R ^1$.
This equivalent is given in Theorem \ref{at most linear}, and
 Sections \ref{section micro discr} is entirely devoted to the proof of 
 Theorem \ref{at most linear}.
 The main idea of the proof is a coupling of the process seen from its tip with 
 a simpler process. Some of the ingredients are the strong law of large numbers for dependent random variables,
 a form of the strong law  for martingales, and
 Novikov's inequalities, or Bichteler-Jacod's inequalities,  for discontinuous martingales. 
 A brief outline of  the proof of Theorem \ref{at most linear} and Section \ref{section micro discr} 
 can be found on Page \pageref{A very brief summary of Section 3}.
 In Section \ref{section micro cont} we finally prove Theorem \ref{finite speed of propagation}
 by coupling the continuous-space process with the discrete-space
 process from Section \ref{section micro discr}.

\section{Lattice truncated process. Linear growth for $\alpha > 2$}\label{section micro discr}

In this section we introduce a discrete-space equivalent defined by \eqref{motile} and \eqref{toad}
 for our continuous-space process 
defined by \eqref{motile c} and \eqref{stampede}. 
We prove in this section that this discrete-space process
spreads not faster than linearly (Theorem \ref{at most linear}).

We consider the birth process  on $\Z _+ ^{\Z}$
with the birth rate
\begin{equation}\label{motile}
b^{(d)}(x,\eta) = 1 \wedge \left( \sum\limits _{y \in \Z  } 
\eta(y) a^{(d)}(x-y) \right), \ \ \ x \in \Z ^1, \eta \in \Z _+ ^{\Z}.
\end{equation}
where
\begin{equation}\label{toad}
a^{(d)}(x) =  \frac{1}{(1 \vee |x|^2)^{\alpha} },
\end{equation}
 (for convenience we consider a slightly modified $a$ in this section
 compared to \eqref{stampede})
 and the initial condition
\[
	\eta _0(k) = \1 \{ k = 0 \}, \quad k \in \Z.
\]

Thus, if the state of the system is $\eta$, the birth  at $x \in \Z$
(that is, the increase by $1$ of the value at $x$) occurs at rate $b^{(d)}(x,\eta) $.
In this section
we denote the resulting birth process
by $(\eta _t)$.
The process is constructed from a Poisson point process
as a unique solution to a certain stochastic equation
as described below.

Note that since $a^{(d)}(x)\leq 1$ for all $x$,
\begin{equation}\label{detriment}
 b^{(d)}(x,\eta) = 1  \quad \quad  \text{ if } 
 \quad  \eta (x) \geq 1.
\end{equation}

Our aim now is to show that the process propagates not faster than at a finite speed if $\alpha >2$.
Throughout this session we assume  $\alpha >2$.
To this end we introduce the process $(\xi _t)_{t \geq  0}$ as $(\eta _t)_{t \geq  0}$ seen from its left tip.

\begin{defi}
	Define $\xi _t(k)  = \eta _t ( \tip{\eta _t} + k)$, $k \geq 0$, where 
	\[
 		\tip{\eta } = \min\{n : \eta  (n) > 0 \}.
	\]
\end{defi}

Note that $(\xi _t)$ takes values in $\Z _+ ^{\Z _+}$.
Now we introduce another process taking values in $\Z _+ ^{\Z _+}$. We will see later
that this process dominates $(\xi _t)$
in a certain sense specified below.

\begin{defi}\label{defizeta}
  Let $(\zeta _t)$ be a process on $\Z _+ ^{\Z _+}$ evolving as follows.
	The process starts from $\zeta _0 (x) = \1 \{x \geq 0 \}$ and 
  \begin{itemize}
   \item at rate $1$ the configuration is shifted to the right by $1$
   and  a particle is added at zero; that is, 
   if a shift occurs at $t$ and $\zeta _{t-} \in \Z _+ ^{\Z _+}$
   is the state before the shift,
   then 
   \[
   \zeta _t  (k) = \zeta _{t-} (k - 1), \ \ \  k \in \N,
   \]
  and $\zeta _t (0) = 1$.

   \item  between the shifts, $\zeta _t (k)$, $k \in \Z_+$,
   evolves as a Poisson process.
   The Poisson processes 
   are independent for different $k$ and of shift times.
  \end{itemize}
\end{defi}

  \label{Heuristics on why alpha = 2}

  \emph{Some heuristics on why the critical value $\alpha _c $ is two. }
  As was noted above, Theorem \ref{finite speed of propagation}
  and Theorem \ref{thm superlinear growth alpha < 2}
  allow us to conclude that for the birth process with rate
  given by \eqref{motile c} the critical value of $\alpha$ is two.
  In this section we prove that the growth is linear for the discrete-space equivalent 
  model and $\alpha > 2$. Before describing the proof, we take a brief pause 
  and give a few heuristic arguments on why the critical value is $2$.
  We give here two arguments, the first one being shorter
  and possibly more straightforward,  while the second one 
  relying on a heuristic comparison to other models.
  
    We start from the following observation.
    Since we take a minimum with $1$ in \eqref{motile}, it is to be expected 
  that, provided that the spread is linear, $\eta _t$ seen from 
  its tip satisfies for some $c >0$
  \begin{equation} \label{commandment}
  \E \xi _t (k) \approx c k,  \ \ \ k \in \N,
  \end{equation}
  or at least 
  \begin{equation}\label{commandment weaker}
  \frac 1t \int\limits _0 ^t \E \xi _s (k) \approx c k,  \ \ \ k \in \N, t \to \infty.
  \end{equation}

  We now proceed with  the first shorter argument. 
  If the system spreads linearly in time, then we can expect that 
  \eqref{commandment} holds.
  Let $X_t = - \tip{\eta _t}$ be the distance from the leftmost occupied site to the origin.  
   The  rate at which $X_t$ 
  jumps by $k$
  is 
  $$j_k (t) = j_k =   \sum\limits _{i = 0} ^{\infty}  a^{(d)} (-k-i)  \xi _{t-} (i) . $$
  For the speed of propagation to be finite we need 
  the sum
  $ {\sum\limits_{ k =1 } ^\infty k j_k} $ to be finite
  (more precisely,  
  the time averages of ${ \sum\limits_{ k =1 } ^\infty k j_k }$ need to be finite and growing not faster than linearly in time;
  note that  ${X_t - \int\limits _0 ^t \sum\limits_{ k =1 } ^\infty k j_k (s-) ds} $ is a martingale).
  Substituting $\xi _{t-} (i)$
  by $c i$ as in
  \eqref{commandment}, we get 
  \begin{equation}\label{last one?}
  \begin{gathered}
  \sum\limits_{ k =1 } ^\infty k j_k =
  c \sum\limits_{ k =1 } ^\infty \sum\limits _{i = 0} ^{\infty}  \frac{ik}{(i+k) ^{2 \alpha}} 
  = c \sum\limits _{m = 1} \frac{1}{m^{2\alpha}} \sum\limits _{ k = 1} ^ m k (m-k)
  \sim  \sum\limits _{m = 1} \frac{m ^3}{m^{2\alpha}} ,
  \end{gathered}
  \end{equation}
  where $\sim $ means that two series have the same convergence/divergence properties.
  We see that the sum in \eqref{last one?} is finite if and only if $\alpha >2$,
  hence one could expect that the critical value $\alpha _c = 2$.
  
  To make the first heuristic argument rigorous we would have to prove something like
  \eqref{commandment} or \eqref{commandment weaker}. 
  However,
  to prove 
  \eqref{commandment} or \eqref{commandment weaker} 
  we 
  would probably 
  need to prove the linear spread rate first.
  In the actual proof that $\alpha = 2$ is critical we 
  dominate $(\xi _t) $ by another process 
  satisfying a 
  weaker version of \eqref{commandment}.
  This auxiliary process helps us  derive
   an inequality giving an  upper bound
    for certain time averages of $(\xi _t) $, see Proposition \ref{dibber}.
  
  The second argument is of purely heuristic nature.
  We introduce two more birth rates,
  \begin{equation}\label{b^d1}
  b^{(d,1)}(x,\eta) = 
   a^{(d)}(x- \tip{\eta }) , \ \ \ x \in \Z ^1, \eta \in \Z _+ ^{\Z},
  \end{equation}
  and 
    \begin{equation}\label{b^d2}
  b^{(d,2)}(x,\eta)  = \sum\limits _{y = \tip{\eta }  } ^0 
   a^{(d)}(x-y) , \ \ \ x \in \Z ^1, \eta \in \Z _+ ^{\Z}.
  \end{equation}
      Denote by $(\eta _t ^{(d,i)})$  the respective birth processes, $i = 1,2$,
 and by $X _t^{(d,i)} =  -\tip {\eta ^{(d,i)}}$ the distance from the leftmost occupied
 site to the origin.
  For $\eta \in \Z _+ ^{\Z}$ with $\tip{\eta}$ well defined,
  let the `essential parts' of the configuration be
  \begin{align*}
    \tilde \eta  ^{(d,1)} (k) & =  \1 \{ k = \tip{\eta }  \}.
  \\
  \tilde \eta  ^{(d,2)} (k) & =  \1 \big\{ k \in \{0, -1, ..., \tip{\eta } \} \big\}.
  \end{align*}

  Note that
    \begin{equation}
  b^{(d,i)}(x,\eta) = 
   b^{(d,i)}(x,\tilde \eta ^{(d,i)}), \ \ \ x \in \Z ^1, \ \eta \in \Z _+ ^{\Z}, \ i =1,2,
  \end{equation}
  so to determine
   the spread rate of $(\eta _t ^{d,i})$
   it is sufficient to know only $(\tilde \eta _t  ^{(d,i)}) $.

   From the definition of $b^{(d,1)}(x,\eta)$
  we see that 
   $X^{(d,1)}$ is a continuous-time discrete-space
  random walk with jumps by $n\in \N$ occuring at rate $a^{(d)}(n)$.
  Therefore,  for $b^{(d,1)}$ the critical value separating 
  linear and superlinear growth is $\alpha = \alpha _c ^{(1)} = 1$ 
  in
  \eqref{toad}.
  
  Now, it is not as straightforward  to
  determine the critical value  $\alpha _c ^{(2)} $
  for $(\eta _t ^{(d,2)})$. We note however that $(\tilde \eta _t ^{(d,2)})$
  is a discrete-space equivalent of the Deijfen's model \cite{Dei03, GM08}.
  It was shown in \cite{GM08} that in one dimension 
  the critical exponent in the kernel is three. Hence it should hold $\alpha _c ^{(2)}  = \frac 32 $.
  
  Let us come back to $(\eta _t)$ with birth rate \eqref{motile} and
   compare $(\eta _t ^{(d,1)})$, $(\eta _t ^{(d,2)})$, and $(\eta _t )$. 
   We start by noting that all three processes are  related
   because they are defined in terms of $a^{(d)}$.   
   The essential part of $(\eta _t ^{(d,1)})$ is a single site 
   $\tilde \eta _t  ^{(d,1)} $. We can roughly say that 
   the essential part of $(\eta _t ^{(d,1)})$ has dimension zero.
   The critical exponent for $(\eta _t ^{(d,1)})$
   is two, which corresponds to the critical value $\alpha _c ^{(1)} = 1$.
   The essential part of $(\eta _t ^{(d,2)})$, 
   namely $\tilde \eta _t  ^{(d,2)} $, can be thought of 
   as a growing  interval. Thus, informally,
   the essential part of $(\eta _t ^{(d,2)})$ has dimension one.
   The respective critical exponent is three, corresponding to 
   the critical value $\alpha _c ^{(2)} = \frac 32 $.
   
   Now, the essential part of $\eta _t$ is $\eta _t$ itself,
   since every site affects the birth rates beyond the tip.
  The number of occupied sites for $(\eta _t )$
  grows at least linearly with time. 
  According to \eqref{commandment},
  the number of particles at each occupied sites 
  also grows linearly.
  Thus, roughly speaking, the essential part of  $(\eta _t)$
  has two dimensions. 
  We can then conjecture that the critical exponent for $(\eta _t)$
  should be one more than that for $(\eta _t ^{(d,2)})$, to compensate for the one more dimension
  (see also Table \ref{table}), and hence $\alpha  _c = 2$.
  
  \begin{table}[!h]
  	\label{table}
  	  \begin{center}
  	\begin{tabular}{ |c|c|c|c| } 
  		\hline
  		The process          & Dimension of the essential part & Critical exponent & Critical value of $\alpha$ \\ 
  		\hline\hline
  		$(\eta _t ^{(d,1)})$ & 0                               & 2                 &   1                        \\ 
  		\hline
  		$(\eta _t ^{(d,2)})$ & 1                               & 3                 &         $\frac 32$         \\ 
  		\hline
  		$(\eta _t )$         & 2                               & ???               &          ???                \\ 
  		\hline
  	\end{tabular}
  \caption{ The critical exponents and the essential dimension }
  \end{center}
    \end{table}
  
  Of course, for  the above heuristic argument to work it is necessary
  also to assume
   that the restriction in $b$
  given by \eqref{motile}, i.e. taking minimum with $1$, 
  does not affect the sites beyond the  tip too much. This seems to be plausible,
  at least for the sites far away from the tip, while the sites near the tip should
  not affect the critical value too much.
  
  Knowing that the guess $\alpha _c = 2$ is correct, we can go a little bit further
  and conjecture that 
  for this kind of models
  \begin{equation}\label{crit exp = essdim + 2}
  \text{Critical exponent} = 
   \text{ Dimension of the essential part} + 2.
  \end{equation}
  Note that this is compatible with the results 
  of \cite{GM08} as the `essential part' of their  $d$-dimensional model would have dimension $d$ as well.
  Let us add that \eqref{crit exp = essdim + 2} is also compatible
  with the discussion of the DP regime
  for the one-dimensional long-range contact process
   in \cite[Page 6 and elsewhere]{long-range_CP_physics}
   because the `essential part' of the  contact process conditioned on
   non-extinction has dimension one.  
  For the  birth process in $d$ dimensions with birth rate
  as in  \eqref{motile}, \eqref{crit exp = essdim + 2} would mean that the critical value is
  \begin{equation*}
   \alpha  = \frac{ d + 3}{2}.
  \end{equation*}

	\label{A very brief summary of Section 3}
 \emph{A brief summary of the section}. As mentioned above, this section is devoted to proving
 that the  distance $(X_t)$ from the origin to the leftmost particle of  $(\eta _t)$ 
 does not grow faster than linearly in time, as formulated in Theorem \ref{at most linear}.
 We rely on the representation $X_t = Q_t + M_t $,
 where $(Q_t)$ is a suitable increasing process, 
 and
  $(M_t)$ is a local martingale later shown to be a true martingale, see
 \eqref{racy}, \eqref{racy2}, and Lemma \ref{semblance}. 
 
 We then proceed to show that a.s. $(Q_t)$ grows not faster than linearly in time 
 as stated
 Proposition \ref{dibber}. To prove Proposition \ref{dibber},
 we introduce in \eqref{def Y_n} a sequence of random variables $\{Y_n\}_{n \in \N}$
   dominating $\{Q_n\}_{n \in \N}$. The sequence $\{Y_n\}_{n \in \N}$
 is closely related to $(\zeta _t)$ while $\{Q_n\}_{n \in \N}$
 is related to $(\xi _t)$,
 and we make use 
 of the fact that
  the process
 $(\zeta _t)$ stochastically dominates $(\xi _t)$
 in the sense made precise below, see Definition \ref{stoc dom} and Proposition \ref{astute}.
 We then proceed to show that $\{Y_n\}_{n \in \N}$
 grows not faster than linearly with $n$.
  A key point in this step
 is a certain decorrelation property \eqref{rebut}, 
 which we establish using properties of
  $(\zeta _t)$.
 Thanks to \eqref{rebut} we are able to apply to $\{Y_n\}_{n \in \N}$ a strong
 law of large numbers  for dependent random variables,
 concluding the proof of Proposition \ref{dibber}.

	Then, using  representation \eqref{a presto sim a dopo} for $(X_t)$
	and Novikov's inequality for discontinuous martingales, we obtain
	a moment estimate for $(M_t)$ in Proposition \ref{fluty}.
	This moment estimate allows us to apply a strong law of large numbers
	for martingales formulated in Theorem \ref{thm 2.18 HH80}.
	
 	By that point we have practically  shown that $(Q_t)$ grows at most linearly in time
 	and $ \frac{M_n}{n} \to 0$, $n \in \N$.
 	This allows us to conclude in Theorem \ref{at most linear}
 	  that $X_t = Q_t + M_t$ does not grow faster than linearly either.

\begin{defi}\label{stoc dom}
	We say that a random element $R_2$ taking values in $\Z _+ ^{\Z _+}$ \emph{stochastically dominates} a random (again $\Z _+ ^{\Z _+}$-valued) element $R_1$ if
	a.s. for every $k = 0,1,...$
  \begin{equation}\label{balkanization}
    \sum\limits _{i = 0} ^k R_1(i) \leq 
    \sum\limits _{i = 0} ^k R_2(i).
  \end{equation}
  We will say that a process $(\hat \zeta _t)$ \emph{stochastically dominates} another process $(\hat \xi _t)$ if a.s. for every $t$ and every $k = 0,1,...$
  \begin{equation} \label{permeate}
    \sum\limits _{i = 0} ^k \hat \xi _t(i) \leq 
    \sum\limits _{i = 0} ^k \hat \zeta _t(i).
  \end{equation}
\end{defi}

The following lemma is a straightforward consequence of Definition \ref{stoc dom}.
  \begin{lem}\label{rambunctious kitty}
   Let $\{a_i\}_{i \in \Z _+}$
   be a non-increasing sequence
   of non-negative numbers.
   If $R_2$ 
   stochastically dominates 
   $R_1$, both are $\Z _+ ^{\Z _+}$-valued
   random elements,
   then
   \begin{equation}\label{irate}
    \E \sum\limits _{i \in \Z _+}
    a_i R_1 (i) \leq 
     \E \sum\limits _{i \in \Z _+}
    a_i R_2 (i).
   \end{equation}
  In particular, if the right hand side of
  \eqref{irate} is finite,
  then so is the left hand side.
  \end{lem}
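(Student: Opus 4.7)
The statement is a standard Abel summation (summation by parts) argument combined with monotone convergence. The plan is to rewrite the sums $\sum_{i=0}^{n} a_i R_j(i)$ in terms of the partial sums $S_k^{(j)} := \sum_{i=0}^{k} R_j(i)$ that appear in Definition \ref{stoc dom}, and then exploit the monotonicity of $\{a_i\}$ and the pointwise inequality $S_k^{(1)} \leq S_k^{(2)}$ supplied by the stochastic domination hypothesis.

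Concretely, for each fixed $n \in \N$ and $j \in \{1,2\}$, summation by parts gives
\[
\sum_{i=0}^{n} a_i R_j(i) \;=\; \sum_{i=0}^{n-1} (a_i - a_{i+1})\, S_i^{(j)} \;+\; a_n\, S_n^{(j)}.
\]
Since $\{a_i\}$ is non-increasing and non-negative, $a_i - a_{i+1} \geq 0$ and $a_n \geq 0$; since a.s.\ $S_k^{(1)} \leq S_k^{(2)}$ for every $k$ by \eqref{balkanization}, each term on the right is non-negative and dominated in the $j=1$ case by the corresponding $j=2$ case. Summing and taking expectations,
\[
\E \sum_{i=0}^{n} a_i R_1(i) \;\leq\; \E \sum_{i=0}^{n} a_i R_2(i), \qquad n \in \N.
\]

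Finally, I would let $n \to \infty$ and invoke monotone convergence (the summands $a_i R_j(i)$ are non-negative) to obtain \eqref{irate}. The ``in particular'' clause is then immediate: if the right-hand side of \eqref{irate} is finite, so is the left-hand side, and hence $\sum_i a_i R_1(i) < \infty$ a.s.\ as well. There is no real obstacle here; the only thing to be careful about is to reduce the problem to the partial sums $S_k^{(j)}$ (which are what the hypothesis controls) rather than the individual coordinates $R_j(i)$, for which no pointwise inequality is assumed.
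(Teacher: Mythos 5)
Your proof is correct, and since the paper declines to write out a proof — it calls the lemma ``a straightforward consequence of Definition \ref{stoc dom}'' — the Abel-summation reduction to the partial sums $S_k^{(j)}$, followed by monotone convergence, is exactly the argument the authors must have in mind. The key observation you make, that the hypothesis controls partial sums rather than individual coordinates and that summation by parts is what bridges this gap, is indeed the whole content of the lemma.
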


\emph{Construction and coupling of $(\eta _t)$, $(\xi _t)$,
	and $(\zeta _t)$}.
Here we construct 
the processes $(\eta _t)$, $(\xi _t)$,
and $(\zeta _t)$
in such  a way that $(\zeta _t)$ 
stochastically dominates $(\xi _t)$. 
We start with $(\eta _t)$, which in this section is the discrete
space birth process with birth rate given by \eqref{motile} and \eqref{toad},
and in whose behavior we are interested in. The processes $(\xi _t)$
and $(\zeta _t)$ are auxiliary processes we need to analyze the 
position of the leftmost occupied site of $(\eta _t)$.

Let $\mathbf{N}$ be a Poisson point process 
on $\R _+ \times \Z \times [0,1]$
with mean measure $ds \times \# \times du$,
where $\# $ is the counting measure on $\Z$. 
Then $(\eta _t)$ can be defined 
as the unique solution to the equation 
(see \cite[Section 5]{shapenodeath})
\begin{equation} \label{se}
 \eta _t (k) = 
 \int\limits _{(0,t] \times \{k\} \times [0, 1 ]}
 \1_{ [0,b^{(d)}(i, \eta _{s-} )] } (u)
 \mathbf{N}(ds di du)
 + \eta _0 (k),
\end{equation}

Define a filtration of $\sigma$-algebras 
$\{ \mathscr{F} _t, t\geq 0 \}$ as 
the completion of
 \begin{align}\label{nonchalant}
   \mathscr{F} ^{0} _t =  \sigma \bigl\{ 
\mathbf{N}(B_1 \times \{ k \} \times B_2) ,
 B_1\in \mathscr{B} ([0,t]), k \in \Z,
B_2\in \mathscr{B} ([0,1])  \bigr\},
 \end{align}
The filtration $\{ \mathscr{F} _t, t\geq 0 \}$ 
is right-continuous and complete.
All the stopping times we consider 
in this section
are with respect to this filtration.

Let $\{N^{(j)} \}_{j \in \Z}$ be a collection of independent 
Poisson processes indexed by $\Z $
defined by \label{a def}
\[
 N^{(j)} _t = \mathbf{N}([0,t]\times \{ j \} \times [0,1])
\]
and  let $\{u^{(j)} _i \}_{j,i \in \N}$
be a two-dimensional array of independent uniformly distributed on $[0,1]$ random variables uniquely defined by 
\begin{equation}\label{foolhardy}
 \mathbf{N}( \{t^{(j)} _i\} \times \{j \}
\times \{ u^{(j)} _i \} ) =1, 
\end{equation}
where 
$t^{(j)} _i = \inf\{t > 0 : \mathbf{N} ([0,t] \times \{j \} \times [0,1] ) = i\}$.
Note that the processes $\{N^{(j)} \}_{j \in \Z}$ and $\{u^{(j)} _i \}_{j,i \in \N}$ are mutually independent.

The evolution of $(\xi _t)$ can be described in terms of $\{N^{(j)} \}_{j \in \Z}$ and $\{u^{(j)} _i \}_{j,i \in \N}$ as follows.
Shifts by $m \in \N$ to the  right occur at moments $t$ when 
$N^{(\tip{\eta} - m)}(\{t\} \times [0, \sum\limits _{k \geq 0} \frac{\xi _{t-}(k)}{(k + m)^{2\alpha }} ]) = 1$,
 and a particle at zero is added.  Between the shift times, the number of particles  at a site
  $j$ grows according to $N^{(\tip{\eta} + j)}$ for $(\xi _t)$; however, an increment by $1$ at time $t$ 
   at the site $j$ actually occurs if not only $N^{(\tip{\eta} + j)}_{t} - N^{(\tip{\eta} + j)}_{t-} = 1$, but also additionally 
\begin{equation} \label{sass}
 u ^{(\tip{\eta} + j)} _{N^{(\tip{\eta} + j)}_t} \leq 
 \sum\limits _{k \geq 0} 
\frac{\xi _{t-}(k)}{(1 \vee |k - j|)^{2\alpha }}.
\end{equation}
If \eqref{sass} is not satisfied, then the value stays the same: $\xi ^{(\tip{\eta} + j)} _t = \xi ^{(\tip{\eta} + j)} _{t-}$.
Thus,  $(\xi _t)$ is a $\Z _+ ^{\Z _+}$-valued process started from $\xi_0 (k) = \1 \{ k = 0 \}$,
$k \in \Z _+$,  that can be described by the following list of events:
\begin{itemize}
	\item for $m \in \N$, shifts by $m$ occur 
	at rate  $ \sum\limits _{k \geq 0} \frac{\xi _{t-}(k)}{(k + m)^{2\alpha }} $.
	Whenever a shift occurs, a single particle is added at the origin.
	(this event occurs at moments $t$ when 
	$N^{(\tip{\eta} - m)}(\{t\} \times [0, \sum\limits _{k \geq 0} \frac{\xi _{t-}(k)}{(k + m)^{2\alpha }} ]) = 1$ );

	\item  the number of particles  at a site
	$j$ increases by $1$ at  rate $1 \wedge	\sum\limits _{k \geq 0} 
	\frac{\xi _{t-}(k)}{(1 \vee |k - j|)^{2\alpha }}$
	( the increase by $1$ occurs at the
	 jump times of $N^{(\tip{\eta} + j)}$ provided that additionally
	$ u ^{(\tip{\eta} + j)} _{N^{(\tip{\eta} + j)}_t} \leq 
	\sum\limits _{k \geq 0} 
	\frac{\xi _{t-}(k)}{(1 \vee |k - j|)^{2\alpha }}$ );
	
	\item The above events happen independently, and no two events occur at the same time.
	
\end{itemize}

Let us now define $(\zeta _t)$ in terms of $\{N^{(j)} \}_{j \in \Z}$. 
Recall that the initial configuration is $\zeta _0 (  k) =1 $, $k \in \Z _+$. 
A shift by $1$ occurs at time moments $t$ when $ N^{(\tip{\eta} -1)} (\{t\}) = 1$.
Between the shift times, the number of particles at a site $j$ grows according to $N^{(\tip{\eta_{t-}} + j)}$ for $(\zeta _t)$, that is, $\zeta _t (  j) - \zeta _{t- } (  j) = 1$ if and only if $N^{(\tip{\eta_{t-}} +j)}_{t } - N^{(\tip{\eta_{t-}} + j)}_{t- } = 1$.

Let us now list some of the properties of the processes $(\zeta _t)$ and $(\xi _t)$
which are used later on. They follow from definitions and construction
of $(\zeta _t)$ and $(\xi _t)$.
\begin{enumerate}
	\item A.s.  for all $t \geq 0$, $\xi _t (0) \geq 1$ and $\zeta _t (0) \geq 1$.
	\item Every shift for $(\zeta _t)$ is a shift for  $(\xi _t)$ too,
	since for $m = 1$,
	$$\sum\limits _{k \geq 0} \frac{\xi _{t-}(k)}{(k + m)^{2\alpha }} \geq \xi_{t-}(0)\geq 1.$$
	\item If a shift occurs for $(\zeta _t)$ ($(\xi _t)$) at time $t$, then $\zeta _t(0) = 1$ ($\xi _t(0) = 1$ respectively).
	\item  If 
	$(\xi _t(j))$ is increased 
	by $1$ at time $t$, $j \in \Z _+$,
	then so is $(\zeta _t (j))$
	(but not necessarily vice versa by  \eqref{sass}).
	\item The processes $(\eta _t)$, 
	$(\xi _t)$, $(\zeta _t)$
	are Markov processes with respect 
	to $\{ \mathscr{F} _t, t\geq 0 \}$.
\end{enumerate}

Let $(\varphi _k)_{k \in \N}$ be the shift times of $(\xi _t)$, that is, $t \in \{ \varphi _k\}_{k \in \N}$ if and only if for some $m \in \N$
$$
\mathbf{N}(\{t\} \times \{\tip {\eta _t}- m \} \times [0, \sum\limits _{k \geq 0} \frac{\xi _{t-}(k)}{(k + m)^{2\alpha }} ]) = 1,
$$
or alternatively if for some $m \in \N$
\[
N^{(\tip{\eta_t}-m)}_{t} - N^{(\tip{\eta_t}-m)}_{t-}  =1 \qquad \text{and} \qquad u^{(\tip {\eta _t} -m)}_{N^{(\tip {\eta _t} -m)} _{t} } \leq \sum\limits _{k \geq 0} \frac{\xi _{t-}(k)}{(k + m)^{2\alpha }}.
\]

Denote by $(\tilde N _t)$ the Poisson process 
such that $\tilde N _{t} - \tilde N _{t-} = 1$
for those $t$ when  
$ N ^{\tip{\eta _t } -1 } _{t} - 
N ^{\tip{\eta _t } -1 } _{t-}= 1$,
so that
$(\tilde N _t)$ is the Poisson process 
whose jumps
are exactly the shift times 
for $( \zeta _t )$.
Let
$\sigma _k = \inf\{t >0: \tilde N_t = k \}$
be the jump times of the process
$(\tilde N_t )$,
that is, $t \in \{ \sigma _k\}_{k \in \N}$
if and only if $\tilde N _{t} - \tilde N _{t-} = 1$.
Let also $\varphi _k = \sigma _k = 0$ for $k = 0,-1,-2,...$
Note that $\{ \sigma _k\}_{k \in \N} \subset \{ \varphi _k\}_{k \in \N} $ since every shift for $(\zeta _t)$ is a shift for $(\xi _t)$ too.
The process $(\zeta _t)$ has the following representation (let us stress here that 
we do not use this representation in the proofs): for $t\geq 0$ 
let $n\in \N$ be such that  $t \in [\varphi _n, \varphi _{n+1})$, then 
\begin{equation*}
	\zeta  _t (j) = 1 + \sum\limits_{\substack{k \in \{0,1,...,n\}: \\
			 \tilde N_{\varphi _k} +j \geq \tilde N_{\varphi _n}}}^n 
	N^{\left(\tip {\eta _{\varphi _k}} + j +  \tilde N_{\varphi _k} - \tilde N_{\varphi _n} \right)}(\varphi _k, \varphi _{k+1}\wedge t], \quad j \in \Z _+.
\end{equation*}

\begin{prop}  \label{astute}
 $(\zeta _t)$ stochastically dominates $(\xi _t)$.
 \end{prop}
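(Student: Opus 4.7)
The plan is to prove the stochastic domination \eqref{permeate} by an induction on the jumps of the driving Poisson point process $\mathbf{N}$. Write $S^\xi_k(t) := \sum_{i=0}^k \xi_t(i)$ and $S^\zeta_k(t) := \sum_{i=0}^k \zeta_t(i)$, so the goal becomes $S^\xi_k(t) \leq S^\zeta_k(t)$ a.s.\ for all $k \in \Z_+$ and $t \geq 0$. At $t = 0$ this is immediate, since $S^\xi_k(0) = 1 \leq k + 1 = S^\zeta_k(0)$.

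The first step is a preliminary monotonicity fact for $(\zeta_t)$: a.s.\ for every $t \geq 0$ and $k \in \Z_+$, $\zeta_t(k) \geq 1$. Indeed, this holds at $t = 0$; between shifts each coordinate can only increase by Poisson jumps; and at a shift time, $\zeta_t(0) = 1$ by construction and $\zeta_t(k) = \zeta_{t-}(k-1) \geq 1$ for $k \geq 1$ by induction. As a consequence, for any $0 \leq m \leq k$,
\begin{equation*}
S^\zeta_k(t) \geq S^\zeta_{k-m}(t) + m.
\end{equation*}

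For the inductive step, note that with probability one the jump times of the Poisson processes $\{N^{(j)}\}_{j \in \Z}$ are all distinct, so at each jump time $t$ exactly one of the following occurs, and between jumps neither process changes: \textbf{(a)} $N^{(\tip{\eta_{t-}} - 1)}$ jumps, triggering a shift by $1$ for both $(\xi_t)$ and $(\zeta_t)$ (for $(\xi_t)$ the acceptance condition is automatic, as noted in property~2 of the construction); \textbf{(b)} $N^{(\tip{\eta_{t-}} - m)}$ jumps for some $m \geq 2$ and the acceptance threshold for $(\xi_t)$ is met, producing a shift by $m$ for $(\xi_t)$ alone; \textbf{(c)} $N^{(\tip{\eta_{t-}} + j)}$ jumps for some $j \geq 0$ and \eqref{sass} holds, so both $\xi_t(j)$ and $\zeta_t(j)$ increase by $1$; \textbf{(d)} $N^{(\tip{\eta_{t-}} + j)}$ jumps for some $j \geq 0$ but \eqref{sass} fails, so only $\zeta_t(j)$ increases; \textbf{(e)} $N^{(\tip{\eta_{t-}} - m)}$ jumps for $m \geq 2$ but the threshold for $(\xi_t)$ is not met, so neither process changes.

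In cases (a), (c), (d), (e) the inequality is preserved almost trivially: in (a) both sides satisfy $S_k(t) = 1 + S_{k-1}(t-)$ for $k \geq 1$ and equal $1$ for $k = 0$; in (c) both sides gain $+1$ for $k \geq j$ and are unchanged otherwise; in (d) the right side only grows; in (e) nothing changes. The main obstacle is case (b), which is where the preliminary observation is used: after the shift, $\xi_t(0) = 1$, $\xi_t(i) = 0$ for $1 \leq i \leq m-1$, and $\xi_t(i) = \xi_{t-}(i - m)$ for $i \geq m$. For $k < m$ this gives $S^\xi_k(t) = 1 \leq \zeta_{t-}(0) \leq S^\zeta_k(t-) = S^\zeta_k(t)$; for $k \geq m$,
\begin{equation*}
S^\xi_k(t) \;=\; 1 + S^\xi_{k-m}(t-) \;\leq\; 1 + S^\zeta_{k-m}(t-) \;\leq\; S^\zeta_k(t-) \;=\; S^\zeta_k(t),
\end{equation*}
using the induction hypothesis and the lower bound displayed above. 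Thus the inequality is preserved through every event, and since both processes are right-continuous and piecewise constant between events, $S^\xi_k(t) \leq S^\zeta_k(t)$ holds for all $t \geq 0$ and all $k \in \Z_+$.
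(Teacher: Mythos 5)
Your proof is correct and rests on exactly the same core ideas as the paper's: examine what can happen at each jump of the driving Poisson point process, use that every shift of $(\zeta_t)$ is a shift of $(\xi_t)$ and every accepted $\xi$-increment is automatically a $\zeta$-increment, and exploit the bound $\zeta_t(k)\geq 1$ (which the paper uses implicitly inside the inequality $1+\sum_{i=0}^{n-1}\zeta_{\theta-}(i)\leq\sum_{i=0}^n\zeta_\theta(i)$, while you state it upfront). The only real difference is the bookkeeping: the paper runs a nested induction on $k$ and, for each $k$, argues by contradiction at the first failure time $\theta$, whereas you run a single induction along the a.s.\ countable, discrete sequence of jump events, verifying the inequality for all $k$ simultaneously at each event via the decomposition $S^\zeta_k(t)\geq S^\zeta_{k-m}(t)+m$. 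Your organization is arguably a bit cleaner since it avoids the "first failure time" device, but the substance and case analysis (cases (a)--(e) collapse to the paper's two nontrivial cases) are the same.
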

 \begin{proof}
Let us show that \eqref{permeate}
is satisfied for every $k = 0,1,...$
if we take $\hat \xi _t = \xi _t$
and $\hat \zeta _t = \zeta _t$.

We use induction on $k$. For $k = 0$
\eqref{permeate} is clear since 
by construction 
every shift 
of $(\zeta _t)_{t\geq 0}$
is a shift for 
$(\xi _t )_{t\geq 0}$ too, 
while 
every time $(\xi _t (0))_{t\geq 0}$
is increased  by $1$
$(\zeta _t (0))_{t\geq 0}$ is increased too.

Fix $n \in \N $  and assume that \eqref{permeate} holds for   $k=0, \ldots, n-1$.
At $t = 0$ \eqref{permeate} with $k=n$ holds.
Let $\theta < \infty $ be the first moment when \eqref{permeate} with $k=n$ does not hold;
note that $\theta $ is well defined since a.s. there are only finitely many shifts up to any time moment, 
and finitely many increments at sites $0, 1, \dots, n$ took place.
Thus we have 
\begin{equation} \label{opulent}
  \sum\limits _{i = 0} ^n  \xi _{\theta - }(i) \leq 
  \sum\limits _{i = 0} ^n  \zeta _{\theta - }(i)
\end{equation}
but 
\begin{equation} \label{lush}
  \sum\limits _{i = 0} ^n  \xi _{\theta  }(i) > 
  \sum\limits _{i = 0} ^n  \zeta _{\theta  }(i).
\end{equation}

If $(\xi _t)$ got shifted by $m$ at $\theta$,
then, at $\theta$, $(\zeta _t)$ got shifted by $1$
or did not change; in either case
\[
 \sum\limits _{i = 0} ^n  \xi _{\theta  }(i)
 \leq  
 1 + \sum\limits _{i = 0} ^{n-1} \xi _{\theta - }(i)
 \leq 
  1 + \sum\limits _{i = 0} ^{n-1} \zeta _{\theta - }(i)
  \leq
   \sum\limits _{i = 0} ^n  \zeta _{\theta  }(i).
\]

If on the other hand $(\xi _t)$
got increased by $1$ 
at a site
 $j$, $0 \leq j \leq n$,
at $\theta$,
then $(\zeta _t)$ got increased 
by $1$ at the same  site too.
So, \eqref{opulent}
 and \eqref{lush}
 cannot both be satisfied for a finite $\theta$,
 and thus we have a contradiction.
 \end{proof}

We now introduce another $\Z _+ ^{\Z _+ }$-valued process defined by
\begin{equation}\label{long-winded}
 \bar \zeta _t (k) = 1 +
 N^{(n-k)}(\sigma _{n-k},t], \quad
 t \in (\sigma _n, \sigma _{n+1}],
\end{equation}
which is equal in distribution to $(\zeta _t)$
by the strong Markov property of a Poisson point process,
see the appendix in \cite{shapenodeath}.
It is a little bit easier to work with, so we will use it in the estimates below.

Denote the distance from the leftmost occupied site for $(\eta _t)$
to the origin by $X_t$, so that
\begin{equation*}
	X_t := -\tip{\eta_t}.
\end{equation*}
Note that $(X_t)$ allows the representation
\begin{equation}\label{tawdry}
X_t = \sum\limits _{m \in \N} m \int\limits _{(0,t] \times [0,1]} \1_{[0,b^{(d)}(\tip{\eta _{s-}} - m, \eta _{s-})]}(u) N^{(\tip{\eta _{s-}} - m)}(ds  du).
 \ \ \ t \geq 0
\end{equation}
To represent $X_t$ as an integral with respect to a Poisson point process, for $0<a<b$ and $m \in \N$
 define the set
$T(a,b,m) = \set{(s,k) \in \R_+ \times \Z}{ a < s\leq b, \tip{\eta _{s-}} + m = k  }$ and the point process
\begin{equation}\label{chuck}
N^{(X)}((a,b]\times\{m\}\times U) = \mathbf{N}(T(a,b,m)\times U),  \ \ \
0<a<b, \ m \in \N, \ U \in \mathscr{B}[0,1].
\end{equation}
Note that for $0<a<b$ a.s. 
\[
N^{(X)}((a,b]\times\{m\}\times U)\1 \{\tip{\eta _{a}} = \tip{\eta _{b}} \} = 
\mathbf{N}((a,b]\times\{ \tip{\eta _{a}}+m\}\times U)\1 \{\tip{\eta _{a}} = \tip{\eta _{b} }\}
\]

It follows from the strong Markov property for 
a Poisson point process
 (as formulated in the appendix in \cite{shapenodeath}) that $N^{(X)}$ 
is a Poisson point process; also, $N^{(X)}$ is equal in distribution to $\mathbf{N}$.
It follows from \eqref{tawdry} and \eqref{chuck} that
\begin{equation}\label{a presto sim a dopo}
X_t = \int\limits _{(0,t]\times \N \times [0,1]}
m
\1_{[0,b^{(d)}(\tip{\eta _{s-}} - m, \eta _{s-})]}(u)
N^{(X)}(ds dm du).
\end{equation}

The process
\begin{align}\label{racy}
  M_t &:= X_t - \int\limits _{0}^t \sum\limits _{m \in \N} m b^{(d)}(\tip{\eta _{s-}} - m, \eta _{s-}) ds \nonumber 
  \\ &= X_t - \int\limits _{0}^t\sum\limits _{m \in \N} m \left( 1 \wedge \sum\limits _{k = 0} ^ \infty \frac{\xi _{s-}(k)}{(m + k )^{2\alpha }} \right) ds, \quad t \geq 0, 
  \end{align}
is therefore a local martingale with respect to 
$\{ \mathscr{F} _t, t\geq 0 \}$, see e.g.
(3.8) in
Section 3, Chapter 2 in
\cite{IkedaWat}.
We will see in Lemma \ref{semblance} 
below that $(M_t)$ is 
a (true) martingale.
We denote by $Q_t$ the second summand on
the right hand side of \eqref{racy},
so that 
\begin{align}\label{racy2}
M_t = X_t - Q_t.
  \end{align}

In the remaining part of this section we prove that 
$(X_t)$ grows at most linearly (Theorem \ref{at most linear}).
First we prove that $(Q_t)$ grows at most linearly (Proposition \ref{dibber}),
then we show that the martingale $(M_t)$ has some nice properties (Proposition \ref{fluty})
which allow  us to apply a strong law of large numbers for martingales in the proof of 
Theorem \ref{at most linear}.
The following lemma collects some relatively straightforward properties which are used multiple times 
in the rest of this section.
\begin{lem}\label{covariance calc}
	Let $\beta$, $X$ and $Y$ be non-negative random variables with finite third moment.
	\begin{enumerate}[label={(\roman*)}]
  	\item if $\beta \perp (X,Y)$ ($\beta$ is independent to $(X,Y)$), then 
			\[
    		\mathrm{Cov}(\beta X, Y) = \E \beta \mathrm{Cov} ( X, Y);
   		\]
		\item if $X \mid \beta \perp Y\mid \beta$ (that is, $X$ and $Y$ are conditionally independent given $\beta$) and 
		${\Ec{X}{\beta} = \Ec{Y}{\beta} = \beta}$, then 
			\[
				\mathrm{Cov}( X, Y) = \mathrm{Var}(\beta);
   		\]
  	\item if $\E(X\mid \beta) = \beta$, then 
    	\[
    		\mathrm{Cov}( X, \beta) = \mathrm{Var}(\beta),  \ \ \  E\beta X = E \beta ^2;
   		\]
    \item if $\E(X\mid \beta) = \E(Y\mid \beta) = \beta$ and $X\mid \beta \perp Y\mid \beta$, then 
    	\[
    		\mathrm{Cov}( \beta X,Y ) = \E \beta ^3 - \E \beta ^2 \E \beta;
   		\]
   	\item if $\E(X\mid \beta) =  \beta$, $\E(X^2 \mid \beta) =  \beta^2 + \beta$ and $Y \perp (X,\beta)$, then
    	\[
    		\mathrm{Cov}( \beta (X+Y),(X+Y) ) = \E \beta ^2 +\E \beta ^3 - \E \beta ^2 \E \beta + \E Y  \mathrm{Var}(\beta) + \E \beta \mathrm{Var}(Y);
   		\]
   	\item if $N$ is a Poisson process independent of $\beta$, then 
			\[
    		\Ec{N(\beta)}{\beta} = \beta.
   		\]
 \end{enumerate}

 \end{lem}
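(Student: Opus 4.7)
The proof is a routine exercise in conditional-expectation manipulations; I would dispatch each of the six items separately, using the tower property and factorization under independence as the only tools.

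For (i) I would simply compute $\E[\beta X Y] = \E\beta\,\E[XY]$ and $\E[\beta X]\E Y = \E\beta\,\E X\,\E Y$, both by independence of $\beta$ from $(X,Y)$, and subtract. For (ii) I would apply the tower property and conditional independence to get $\E[XY] = \E[\E[X\mid\beta]\E[Y\mid\beta]] = \E\beta^2$, then note that $\E X = \E Y = \E\beta$, yielding $\mathrm{Cov}(X,Y) = \mathrm{Var}(\beta)$. For (iii) the identity $\E[\beta X] = \E[\beta\E[X\mid\beta]] = \E\beta^2$ is immediate by pulling $\beta$ out of the inner expectation, and the covariance claim follows.

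For (iv) the tower property gives $\E[\beta X Y] = \E[\beta\,\E[X\mid\beta]\,\E[Y\mid\beta]] = \E\beta^3$, while (iii) gives $\E[\beta X] = \E\beta^2$ and $\E Y = \E\beta$, so $\mathrm{Cov}(\beta X, Y) = \E\beta^3 - \E\beta^2\,\E\beta$. Finally, for (v) I would expand
\[
\mathrm{Cov}(\beta(X+Y), X+Y) = \mathrm{Cov}(\beta X, X) + \mathrm{Cov}(\beta X, Y) + \mathrm{Cov}(\beta Y, X) + \mathrm{Cov}(\beta Y, Y)
\]
and handle each piece: $\mathrm{Cov}(\beta X, X) = \E\beta^3 + \E\beta^2 - \E\beta^2\,\E\beta$ using the assumption $\E[X^2\mid\beta] = \beta^2+\beta$ inside the tower property; $\mathrm{Cov}(\beta X, Y) = 0$ because $Y \perp (X,\beta)$ implies $Y\perp \beta X$; $\mathrm{Cov}(\beta Y, X) = \E Y\,\mathrm{Var}(\beta)$ by factoring out $Y$ and applying (iii); and $\mathrm{Cov}(\beta Y, Y) = \E\beta\,\mathrm{Var}(Y)$ by (i). Summing reproduces the stated formula. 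Item (vi) is just the Poisson mean formula conditionally on $\beta$.

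There is no genuine obstacle — each identity reduces to one application of the tower property plus, where appropriate, factorization via independence; the only point requiring a moment of care is the bookkeeping in (v), where one must identify which of the four cross-covariances vanishes and which reduce by the earlier items.
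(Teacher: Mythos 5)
Your proposal is correct and, for items (i)--(v), it follows essentially the same route as the paper: condition on $\beta$, apply the tower property, and factor under independence where available. Your expansion of the covariance in (v) into four cross-terms and the way you dispatch each (using (i) and (iii) for the mixed terms, the assumption $\E[X^2\mid\beta]=\beta^2+\beta$ for the $\mathrm{Cov}(\beta X, X)$ term, and $Y\perp(X,\beta)$ for the vanishing one) matches the computation in the paper exactly.

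The one place where you are noticeably more cursory than the paper is (vi). You dismiss it as ``just the Poisson mean formula conditionally on $\beta$,'' but there is a genuine measure-theoretic point there: $N(\beta)$ is a stochastic process evaluated at a random time, and the identity $\Ec{N(\beta)}{\beta}=\beta$ requires a substitution-in-the-conditional-expectation argument. The paper takes care of this by invoking the disintegration theorem for regular conditional probability distributions (Kallenberg, Thm.~6.4): treating $N$ as a random element in the Skorokhod space, using independence of $N$ and $\beta$ to identify the conditional law of $N$ given $\beta$ with the unconditional law $\nu$, and then writing $\Ec{s(\beta)}{\beta}=\int_S \nu(ds)\,s(\beta)=\beta$. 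Your one-line assertion is morally correct --- the conditional distribution of $N(\beta)$ given $\beta$ is indeed $\mathrm{Pois}(\beta)$ --- but it hides exactly the step the paper chooses to justify carefully. This is not a gap in the sense of a wrong idea, but it is the only item where the paper's proof contains content beyond routine tower-property bookkeeping, and your proposal does not acknowledge it.
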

 
 \begin{proof} The proof is based on the properties of conditional expectation.
 	The proofs of $(i)$-$(v)$ are done by conditioning on $\beta$.
 	  We give the proofs for $(ii)$, $(iv)$ and $(vi)$ only; the others are similar to $(ii)$
 	  and $(iv)$.
 	  For $(ii)$,
 	  \begin{multline*}
 	  \mathrm{Cov}( X, Y) = \E XY - \E X \E Y = \E \Ec{XY}{\beta} - \left(\E{\beta} \right) ^2
 	  \\
 	   = 
 	   \E \left( \Ec{X}{\beta} \Ec{Y}{\beta}\right) - \left(\E{\beta} \right) ^2 = 
 	   \E \beta ^2 -\left(\E{\beta} \right) ^2 = \mathrm{Var}(\beta).
 	  \end{multline*}
 For $(iv)$,
 \begin{multline*}
  \mathrm{Cov}( \beta X,Y ) = \E \beta X Y - \E \beta X  \E  Y = \E \Ec{\beta X Y}{\beta} - \E  \beta  \E \Ec{\beta X }{\beta}\\ 
 = \E\beta \Ec{ X Y}{\beta} - \E  \beta  \E \big( \beta  \Ec{X }{\beta} \big) = \E\big( \beta \Ec{ X }{\beta} \Ec{ Y}{\beta}\big) -  \E  \beta  \E  \beta ^2 = \E  \beta ^3 - E  \beta  \E  \beta ^2.
 \end{multline*}

To prove $(vi)$
we use the disintegration theorem for regular conditional
probability distribution, see e.g. Kallenberg \cite[Theorem 6.4]{KallenbergFound}.
To adapt to the notation in the preceding reference, 
let $S = D([0,+\infty), \R )$ (the Skorokhod space)
equipped 
with the cylindrical $\sigma$-algebra,
and $T = \R _+$ equipped with the Borel $\sigma$-algebra,
and consider $N$ and $\beta$ as random elements 
in $S$ and $T$ respectively. Note that since 
$N$ and $\beta$ are independent, 
the regular conditional probability distribution 
of $N$ given $\beta$ is simply the distribution of $N$ in $S$,
which we denote by $\nu$.
Define $f(s,t) = s(t)$, $s \in S$, $t \in T$.
For every $q \geq 0$,
\[
 \int\limits _{S} \nu(ds) s(q) = \E N(q) = q,
\]
hence by the disintegration theorem a.s.
\[
 \Ec{f(N,\beta)}{\beta} = 
 \int\limits _{S} \nu(ds) f(s, \beta) =
 \int\limits _{S} \nu(ds) s(\beta) = \beta.
\]
 \end{proof}
\begin{rmk}
 Concerning item $(vi)$, note that the conditional distribution
 of $N(\beta)$ given $\beta$ is 
 $\mathcal{P}ois(\beta)$, 
 where
 $\mathcal{P}ois(q)$
 is the Poisson distribution 
 with parameter 
 $q \geq 0$.
\end{rmk}

\begin{lem}\label{semblance}
 The process $(M_t)$ is a true martingale.
\end{lem}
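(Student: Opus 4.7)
The plan is to exploit the decomposition $M_t = X_t - Q_t$ from \eqref{racy2}: since both $X_t=-\tip{\eta_t}\geq 0$ and $Q_t\geq 0$, it suffices to establish $\E Q_t<\infty$ for every $t$. Once this is known, pick any localizing sequence $(\tau_n)$ with $\tau_n\nearrow\infty$ for the local martingale $(M_t)$; then $\E X_{t\wedge\tau_n}=\E Q_{t\wedge\tau_n}\leq \E Q_t$, and Fatou's lemma (applied to $X_t\geq 0$) yields $\E X_t<\infty$. Consequently $|M_{t\wedge\tau_n}|\leq X_t+Q_t\in L^1$, and dominated convergence upgrades the martingale identity for $M^{\tau_n}$ to $\E[M_t\mid\mathscr F_s]=M_s$ in the limit, which is exactly the assertion.

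Hence the real work is to prove $\E Q_t<\infty$. Using $1\wedge y\leq y$ and taking expectation,
\[
\E Q_t \leq \int_0^t\sum_{m\in\N} m\sum_{k\geq 0}\frac{\E\xi_{s}(k)}{(m+k)^{2\alpha}}\,ds.
\]
For each fixed $m$ the sequence $k\mapsto (m+k)^{-2\alpha}$ is non-increasing, so Proposition \ref{astute} together with Lemma \ref{rambunctious kitty} lets us replace $\xi_s(k)$ by $\zeta_s(k)$ in the inner sum. To bound the latter I would use the representation \eqref{long-winded}, which expresses a copy of $(\zeta_t)$ as $1$ plus rate-one Poisson increments on shifted intervals, to derive the uniform estimate $\E\zeta_s(k)\leq 1+s$.

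Putting the pieces together,
\[
\E Q_t \;\leq\; (1+t)\int_0^t\sum_{m\in\N} m\sum_{k\geq 0}\frac{1}{(m+k)^{2\alpha}}\,ds \;\leq\; C_\alpha\, t(1+t)\sum_{m\in\N}\frac{1}{m^{2\alpha-2}},
\]
using the elementary bound $\sum_{k\geq 0}(m+k)^{-2\alpha}\leq C_\alpha m^{1-2\alpha}$. The final series in $m$ converges because $2\alpha-2>1$ under the standing assumption $\alpha>2$, which closes the argument.

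The main obstacle is the seemingly routine estimate $\E\zeta_s(k)\leq 1+s$: the index $n-k$ appearing in \eqref{long-winded} is random and entangled with the shift times, so one must invoke the strong Markov property of the underlying Poisson point process (as referenced in the appendix of \cite{shapenodeath}) in order to replace $N^{(n-k)}(\sigma_{n-k},s]$ by a Poisson count of mean at most $s$. Everything else is a short chain of inequalities whose convergence is powered by the two spare powers of $m$ that the assumption $\alpha>2$ produces.
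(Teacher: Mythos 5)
Your proposal is correct and follows essentially the same route as the paper: the key step in both is bounding $\E Q_t<\infty$ by replacing $\xi_{s-}$ by the dominating process $\bar\zeta_{s-}$ (via Proposition \ref{astute} and Lemma \ref{rambunctious kitty}), then reading off the Poisson increment mean $\E\bar\zeta_s(k)\leq 1+s$ from \eqref{long-winded}, and finally summing. The only cosmetic difference is that the paper invokes Protter's Theorem 51 (a local martingale with $\E\sup_{s\leq t}|M_s|<\infty$ is a true martingale) once it observes $\E X_t=\E Q_t$ and that $X,Q$ are non-decreasing, whereas you unpack the same fact directly via a localizing sequence, monotone convergence for $X_{t\wedge\tau_n}\nearrow X_t$, and dominated convergence to pass to the limit in the martingale identity. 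Your arithmetic (collapsing $\sum_{k}(m+k)^{-2\alpha}\preceq m^{1-2\alpha}$) is slightly sharper than the paper's $(m+k)^{2\alpha}\geq m^\alpha k^\alpha$, but both yield convergence under $\alpha>2$.
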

\begin{proof}
By Lemma \ref{covariance calc} for every $t \geq 0$,
\begin{align*}
 \E{Q _t} &= \E \int\limits _{0}^t\sum\limits _{m \in \N} m \left( 1 \wedge \sum\limits _{k = 0} ^ \infty \frac{\xi _{s-}(k)}{(m + k )^{2\alpha }} \right) ds \leq \E \int\limits _{0}^t\sum\limits _{m \in \N} m  \sum\limits _{k = 0} ^ \infty \frac{\bar \zeta _{s-}(k)}{(m + k )^{2\alpha }} ds \\ 
 	&= \int\limits _{0}^t\sum\limits _{m \in \N} m \sum\limits _{k = 0} ^ \infty \frac{1 + \E N^{(n-k)}(\sigma _{n-k},s] \1\{\sigma _{n-k} \leq t \}}{(m + k )^{2\alpha }} ds \\
  &= \int\limits _{0}^t\sum\limits _{m \in \N} m \sum\limits _{k = 0} ^ \infty \frac{1 + \E (s  - \sigma _{n-k})_+ }{(m + k )^{2\alpha }} ds \leq \int\limits _{0}^t\sum\limits _{m \in \N} m \sum\limits _{k = 0} ^ \infty \frac{1 + s }{(m + k )^{2\alpha }} ds \\ 
  &< t(t+1) \sum\limits _{m \in \N} \frac{m}{m ^{\alpha }} \sum\limits _{k = 0} ^ \infty \frac{1}{ k ^{\alpha}} = t(t+1) \sum\limits _{m \in \N} \frac{1}{m ^{\alpha -1 }} \sum\limits _{k = 0} ^ \infty \frac{1}{ k ^{\alpha}}.
 \end{align*}
 and hence for every $t \geq 0$
 \[
 \E \sup\limits _{s \leq t} |M_s| \leq \E|X_t| + \E|Q_t| = 2 \E|Q_t| < \infty. 
 \]

The statement of the lemma now follows from Theorem 51 in Protter \cite{Pro05}.
\end{proof}

 	The following proposition is a key step in the proof of the main result of this section, Theorem \ref{at most linear}.
 	We establish here that $(Q_t) $ grows at most linearly with $t$.
 	
 \begin{prop} \label{dibber}
 	
 		$(i)$ There exists $C >0$ such that 
 		a.s. for sufficiently large t,
 		\begin{equation}
 		\int\limits _{s = 0} ^t ds
 		\sum\limits _{k \in \Z _+} 
 		\frac{ \xi _s (k)}{k^\alpha} < Ct.
 		\end{equation}
 		
 		$(ii)$ 	There exists $C >0$ such that 
 		a.s. for sufficiently large t,
 		\begin{equation*}
 		Q_t \leq Ct.
 		\end{equation*}

 \end{prop}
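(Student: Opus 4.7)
The plan is to reduce both (i) and (ii) to time averages of a functional of the explicit process $(\bar\zeta_t)$ from \eqref{long-winded}, and then apply a strong law of large numbers for weakly dependent random variables to a discretized sequence $\{Y_n\}_{n\in\N}$ that dominates these averages. To move from $(\xi_t)$ to $(\bar\zeta_t)$, I would first exploit the fact that both weights $k\mapsto 1/(1\vee k)^\alpha$ and $k\mapsto 1/(m+k)^{2\alpha}$ are non-increasing in $k$. Summation by parts turns the pathwise partial-sum inequality of Definition \ref{stoc dom} (valid by Proposition \ref{astute}) into the pointwise bound
\[
\sum_{k\in\Z_+}\frac{\xi_s(k)}{(1\vee k)^\alpha}\leq\sum_{k\in\Z_+}\frac{\zeta_s(k)}{(1\vee k)^\alpha}\stackrel{d}{=}\sum_{k\in\Z_+}\frac{\bar\zeta_s(k)}{(1\vee k)^\alpha},
\]
and the analogous bound for the $Q$-integrand, so it is enough to prove (i) and (ii) with $\xi_s$ replaced by $\bar\zeta_s$.

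Next, for $n\in\N$ I would set
\[
Y_n:=\int_{n-1}^{n}\sum_{m\in\N} m\Bigl(1\wedge\sum_{k=0}^\infty\frac{\bar\zeta_s(k)}{(m+k)^{2\alpha}}\Bigr)\,ds,
\]
and analogously $Y_n^{(i)}$ for the integrand of (i). Then $Q_t\leq\sum_{j=1}^{\lceil t\rceil}Y_j$, and the left-hand side of (i) is bounded by $\sum_{j=1}^{\lceil t\rceil}Y_j^{(i)}$. Writing out \eqref{long-winded} and using Lemma \ref{covariance calc}(vi) one checks that $\E\bar\zeta_s(k)\lesssim k+1$ uniformly in $s$ on a unit interval, so the double series $\sum_m m\sum_k (k+1)/(m+k)^{2\alpha}$ converges precisely when $\alpha>2$, yielding $\sup_n\E Y_n\leq C$. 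A similar computation, still using $\alpha>2$, delivers $\sup_n\E Y_n^2\leq C$.

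Since $(\bar\zeta_s)$ over different unit intervals shares Poisson ``columns'' $\{N^{(j)}\}$, the sequence $\{Y_n\}$ is not independent, and the central task is to quantify its decorrelation. Only columns indexed by $j$ close to the running shift counter $\tilde N_s$ contribute non-negligibly over $[n-1,n]$, so one expects $\mathrm{Cov}(Y_n,Y_{n+r})$ to decay with $r$. Using Lemma \ref{covariance calc} for the Poisson moment computations, and splitting the inner sum at a cutoff $k^\ast=k^\ast(m)$ where the untruncated tail first falls below $1$, I would establish a bound of the form $\sum_{r\geq 0}\sup_n|\mathrm{Cov}(Y_n,Y_{n+r})|<\infty$. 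With this decorrelation estimate in hand, a strong law of large numbers for weakly dependent variables (e.g.\ Serfling's) applies to $\{Y_n\}$ and yields $\frac1n\sum_{j=1}^n Y_j\to\E Y_1$ a.s., hence $Q_n\leq Cn$ eventually; monotonicity in $t$ then gives (ii) for non-integer $t$, and the same argument gives (i).

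The main obstacle is the rigorous control of $\mathrm{Cov}(Y_n,Y_{n+r})$: the truncation by $1$ inside the integrand---the very feature that makes $\alpha=2$ critical---blocks a direct second-moment expansion, so one must split the inner sum at a cutoff depending on $m$, control head and tail separately via \eqref{long-winded}, and only then invoke the moment identities of Lemma \ref{covariance calc}. A secondary subtlety is that the running tip itself depends on the Poisson data, so some care is needed to ensure that the columns that are ``relevant'' for $Y_n$ are almost disjoint from those relevant for $Y_{n+r}$ when $r$ is large; this is exactly the decorrelation property \eqref{rebut} foreshadowed in the section summary.
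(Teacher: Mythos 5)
Your high-level plan matches the paper's: reduce to the process $(\bar\zeta_t)$ via stochastic dominance and equality in distribution, discretize into a sequence of weakly dependent random variables, bound means and covariances, and finish with a strong law of large numbers for dependent sequences. But there are two substantive issues.

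First, the discretization you choose works against you. You cut time into unit intervals $[n-1,n]$ and set $Y_n$ to be the integral of the rate over $[n-1,n]$. The paper instead cuts time at the \emph{shift events} $\sigma_n$ and works with $Z_n=(\sigma_{n+1}-\sigma_n)Y_n$ where $Y_n=\sum_{k}\bigl(1+N^{(n-k)}(\sigma_{n-k},\sigma_{n+1}]\bigr)/k^\alpha$, exploiting the explicit representation \eqref{long-winded}. This is not a cosmetic difference: with the paper's indexing, the Poisson columns that enter $Z_n$ are exactly $\{N^{(n-k)}\}_{k\geq0}$ (a \emph{deterministic} set of labels), the interarrival times $\sigma_{j+1}-\sigma_j$ are i.i.d.\ exponentials, and the overlap between $Z_n$ and $Z_{n+m}$ reduces to the single index coincidence $n-i=n+m-j$. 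This is precisely what makes the term-by-term covariance decomposition in \eqref{stud}--\eqref{surly} tractable and yields the clean bound $\mathrm{Cov}(Z_n,Z_{n+m})\leq C_Z m^{-(\alpha-1)}$. With your unit-interval discretization, the set of ``relevant'' columns for $Y_n$ is determined by the random shift counter $\tilde N_s$, $s\in[n-1,n]$, and you would have to control the overlap between two \emph{random} index sets. You flag this as a ``secondary subtlety,'' but it is really the central difficulty your choice of discretization creates.

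Second, your self-identified ``main obstacle''---the truncation by $1$ blocking the second-moment expansion---is a red herring. The paper sidesteps it entirely: part~$(i)$ of the Proposition has no minimum in it, and $(i)\Rightarrow(ii)$ follows immediately from $(m+k)^{2\alpha}\geq m^\alpha k^\alpha$, which factors the double sum. Once you prove $(i)$ (where you may simply drop the $1\wedge$), $(ii)$ is free. You should structure the argument this way rather than keep the minimum inside your $Y_n$.

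Finally, and most importantly, the covariance decay estimate is the technical heart of the proof, and your proposal does not carry it out---it only names it as an obstacle and gestures at ``splitting the inner sum at a cutoff.'' Without that computation (or at least a clear argument for why $\sum_r\sup_n|\mathrm{Cov}(Y_n,Y_{n+r})|<\infty$ in your setup), the proof is incomplete. The paper's computation spans equations \eqref{grovel} through \eqref{surly} and relies on the elementary conditional-moment identities in Lemma~\ref{covariance calc}; that is the work you still need to do, and it is materially easier in the shift-indexed parametrization than in yours.
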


\begin{proof}
First we show that $(i)$ implies $(ii)$.
 Indeed,
\begin{equation*}
\begin{gathered}
Q_t = \int\limits _{0}^t\sum\limits _{m \in \N} 
m \left( 1 \wedge \sum\limits _{k = 0} ^ \infty
\frac{\xi _{s-}(k)}{(m + k )^{2\alpha }}
\right) ds
\leq 
\int\limits _{0}^t\sum\limits _{m \in \N} 
m \sum\limits _{k = 0} ^ \infty
\frac{\xi _{s-}(k)}{m^{\alpha}k ^{\alpha}} ds
= \sum\limits _{m \in \N} 
\frac{1}{m^{\alpha - 1}}
\int\limits _{0}^t  \sum\limits _{k = 0} ^ \infty
\frac{\xi _{s-}(k)}{k ^{\alpha}} ds,
\end{gathered}
\end{equation*}
so that $(i)$ yields  $(ii)$.

The rest  is devoted to the proof of $(i)$.
		 By Lemma
 \ref{rambunctious kitty}
 and Proposition \ref{astute},
 
 \begin{equation}\label{twinge}
  \int\limits _{s = 0} ^t ds
  \sum\limits _{k \in \Z _+} 
 \frac{1}{k^\alpha} \xi _s (k) \leq 
  \int\limits _{s = 0} ^t ds
  \sum\limits _{k \in \Z _+} 
 \frac{1}{k^\alpha} \zeta _s (k).
 \end{equation}
 
 Define $\sigma(-i) = 0$, $i \in \N$, and
\begin{equation}\label{def Y_n}
 Y_ n = \sum\limits _{k \in \Z _+}
 \frac{1+ N^{(n-k)}(\sigma _{n-k},\sigma _{n+1}]}{k^\alpha} .
\end{equation}

Recall that the process $(\bar \zeta _t)$ was defined in \eqref{long-winded}.
Clearly
\begin{equation}\label{hose}
 Y_ n \geq \sum\limits _{k \in \Z _+} 
 \frac{1}{k^\alpha} \bar \zeta _t (k),
 \quad
 t \in (\sigma _n, \sigma _{n+1}].
\end{equation}

Combining \eqref{twinge}
and \eqref{hose} and recalling that
$(\zeta _t) \overset{d}{=} (\bar \zeta _t) $
result in the observation that 
it is sufficient to show that
the strong law of large numbers holds for
$(Z_n)_{n \in \N}$, where 
\[
 Z_n:= (\sigma_{n+1} - \sigma _n) Y_n.
\]

As jump times of a Poisson process, $\sigma_{n+1} - \sigma _n$ are independent unit exponentials,
in particular 
\[
 \EE{(\sigma_{n+1} - \sigma _n)^k} = k!,  \ \ \ k \in \N.
\]
Note that for every $n \in \N$
\begin{align}\label{alpha is three sedonds}
 \E Z_n &= \E \Big[(\sigma_{n+1} - \sigma_n) \sum\limits_{k \in \Z _+} \frac{1}{k^\alpha} N^{(n-k)}(\sigma _{n-k},\sigma _{n+1}] \Big]
		+ \sum\limits _{k \in \Z _+} \frac{1}{k^\alpha} \nonumber \\
		&= \sum\limits _{k \in \Z _+} \frac{1}{k^\alpha} \E \left[  (\sigma_{n+1} - \sigma _n) N^{(n-k)}(\sigma _{n-k},\sigma _{n}] \right] + \sum\limits _{k \in \Z _+} \frac{1}{k^\alpha} \E \left[ (\sigma_{n+1} - \sigma _n) N^{(n-k)}(\sigma _n,\sigma _{n+1}] \right] \nonumber \\ 
		&\quad + \sum\limits _{k \in \Z _+} \frac{1}{k^\alpha} =  \sum\limits _{k \in \Z _+} \frac{k}{k^\alpha} + 3\sum\limits _{k \in \Z _+} \frac{1}{k^\alpha},
\end{align}
and the last two sums are finite. Thus $\E Z_n$
is bounded in $n$.
In \eqref{alpha is three sedonds} we applied Lemma \ref{covariance calc} $(iii)$.
In this proof
 we make use of 
Lemma \ref{covariance calc}
in multiple places.

The random variables $\{Z_n \}_{n \in \N}$ are not independent, however the covariance is small for distant elements: we are going to show that there exists a constant $C_{_Z} >0$ such that for $n, m \in \N$.
\begin{equation}\label{rebut}
	\mathrm{Cov}(Z_n, Z_{n+m}) \leq \frac{C_{_Z}}{m^{\alpha - 1}}.
\end{equation}

We have
\begin{align}\label{stud}
  \mathrm{Cov}(Z_n, Z_{n+m}) &= \mathrm{Cov} \Big( \sum\limits _{i \in \Z _+} \frac{\sigma _{n+1} - \sigma _n}{i^\alpha} N^{(n-i)}(\sigma _{n-i},\sigma _{n+1}], \nonumber \\ 
	&\qquad\qquad \sum\limits _{j \in \Z _+} \frac{\sigma _{n+m+1} - \sigma _{n+m}}{j^\alpha} N^{(n+m-j)}(\sigma _{n+m-j},\sigma _{n+m+1}] \Big) \nonumber \\
 &= \sum\limits _{i,j \in \Z _+} \frac{1}{i^\alpha j^\alpha} \mathrm{Cov}\Big( (\sigma _{n+1} - \sigma _n)N^{(n-i)}(\sigma _{n-i},\sigma _{n+1}],  \nonumber  \\
 &\qquad\qquad (\sigma _{n+m+1} - \sigma _{n+m})N^{(n+m-j)}(\sigma _{n+m-j},\sigma _{n+m+1}] \Big).
\end{align}

Let us denote by $\cov(i,j)$ the covariance in the last sum of \eqref{stud}.  
Recall that we defined $\sigma _k = 0$ for $k = 0, -1, -2, ...$.
We can split the interval $(\sigma _{n+m-j},\sigma _{n+m+1}]$ as follows:
\begin{multline*} 
(\sigma _{n+m-j},\sigma _{n+m+1}]  \\ =
	\begin{cases}
			(\sigma _{n+m-j},\sigma _{n-i}] \cup (\sigma _{ n-i},\sigma _n]
		 	\cup (\sigma _n,\sigma _{n+1}] 
			\cup (\sigma_{ n+1 },\sigma _{n+m+1}],
			 & \text{if } j > m +i 
		\\
		(\sigma _{n+m-j},\sigma _n]
		\cup (\sigma _n,\sigma _{n+1}] \cup (\sigma_{ n+1 }, \sigma _{n+m+1}], \
		&  \text{if } m + i \geq j > m 
		\\
		(\sigma _n,\sigma _{n+1}] \cup (\sigma_{n+1 },\sigma _{n+m+1}], \
	    &	\text{if }  j = m 
	    \\ (\sigma_{n+m-j },\sigma _{n+m+1}], 
	    &	\text{if }  j < m 
	\end{cases}
\end{multline*}
or, alternatively, 
\begin{multline}
(\sigma _{n+m-j},\sigma _{n+m+1}]  \\ =
\begin{cases}
\begin{aligned}
(\sigma _{n+m-j},\sigma _{n-i}] \cup (\sigma _{(n+m-j)\vee (n-i)},\sigma _n]
\cup (\sigma _n,\sigma _{n+1}] 
\\
\cup (\sigma_{(n+m-j)\vee (n+1) },\sigma _{n+m+1}],
\end{aligned}
& \text{if } j > m +i 
\\
(\sigma _{(n+m-j)\vee (n-i)},\sigma _n]
\cup (\sigma _n,\sigma _{n+1}] \cup (\sigma_{(n+m-j)\vee (n+1) }, \sigma _{n+m+1}], \
&  \text{if } m + i \geq j > m 
\\
		(\sigma _n,\sigma _{n+1}] \cup (\sigma_{(n+m-j)\vee (n+1) },\sigma _{n+m+1}], \
&	\text{if }  j = m 
\\ (\sigma_{(n+m-j)\vee (n+1) },\sigma _{n+m+1}], 
&	\text{if }  j < m,
\end{cases}
\end{multline}
and hence (with convention that $(a,b] = \varnothing$ if $a> b$)
\begin{equation}  \label{rancid}
\begin{split}
(\sigma _{n+m-j},\sigma _{n-i}] \ne \varnothing \text{ and }
(\sigma _{n+m-j},\sigma _{n-i}] \subset (\sigma _{n+m-j},\sigma _{n+m+1}] & \Leftrightarrow j > m +i, \\
(\sigma _{(n+m-j)\vee (n-i)},\sigma _n]  \ne \varnothing \text{ and }
(\sigma _{(n+m-j)\vee (n-i)},\sigma _n] \subset (\sigma _{n+m-j},\sigma _{n+m+1}] & \Leftrightarrow j > m, \\
(\sigma _n,\sigma _{n+1}] \subset (\sigma _{n+m-j},\sigma _{n+m+1}] & \Leftrightarrow j \geq m. \\
\end{split}
\end{equation}
We now proceed to estimate $\cov(i,j)$. Using \eqref{rancid} we get 
\begin{align}\label{grovel}
\cov(i,j) &= \mathrm{Cov} \Big( (\sigma _{n+1} - \sigma _n) \left\{ N^{(n-i)}(\sigma _{n-i},\sigma _n]+ N^{(n-i)}(\sigma _n,\sigma _{n+1}] \right\}, \nonumber \\
	&\qquad\qquad (\sigma _{n+m+1} - \sigma _{n+m}) \Big\{ \1\{j>m+i \} N^{(n+m-j)}(\sigma _{n+m-j},\sigma _{n-i}] \nonumber \\
 	&\qquad\qquad\quad+ \1\{j>m\} N^{(n+m-j)}(\sigma _{(n+m-j)\vee (n-i)},\sigma _n] 
  + \1\{j \geq m\} N^{(n+m-j)}(\sigma _n,\sigma _{n+1}] \nonumber \\
  &\qquad\qquad\quad+ N^{(n+m-j) }(\sigma_{(n+m-j)\vee (n+1) },\sigma _{n+m+1}] \Big\} \Big) \nonumber \\
 	&= s_{11}+ s_{12} + s_{13}+s_{14} +s_{21}+ s_{22} + s_{23}+s_{24},
\end{align}
where $s_{uv}$, $u\in \{1,2 \}$, $v \in \{1,2,3,4 \}$, stands for the covariance of $u$-th and $v$-th summands in the decomposition in \eqref{grovel}, for example
\begin{multline*}
 s_{23} = \mathrm{Cov} \Big( (\sigma _{n+1} - \sigma _n) N^{(n-i)}(\sigma _n,\sigma _{n+1}], 
 (\sigma _{n+m+1} - \sigma _{n+m}) \1\{j \geq m\} N^{(n+m-j)}(\sigma _n,\sigma _{n+1}] \Big).
\end{multline*}

Let us estimate each of $s_{uv}$. To start off, $s_{11} = s_{21} =s_{14} = s_{24} = s_{22} =0$ as the covariance of independent random variables.
In particular,
\begin{multline*}
 s_{22} = \1\{j>m\} \mathrm{Cov} \Big( (\sigma _{n+1} - \sigma _n) N^{(n-i)}(\sigma _n,\sigma _{n+1}],\\ 
 (\sigma _{n+m+1} - \sigma _{n+m}) N^{(n+m-j)}(\sigma _{(n+m-j)\vee (n-i)},\sigma _n] \Big) = 0.
\end{multline*}

To other 
terms we apply Lemma \ref{covariance calc}.
 Assume first that $n-i \ne n+m-j$.
We have by Lemma \ref{covariance calc} $(i)$, $(ii)$,
and $(vi)$,
\begin{align*}
 s_{12} &= \1\{j>m\} \mathrm{Cov} \Big( (\sigma _{n+1} - \sigma _n) N^{(n-i)}(\sigma _{n-i},\sigma _n], \\
 	&\qquad (\sigma _{n+m+1} - \sigma _{n+m}) N^{(n+m-j)}(\sigma _{(n+m-j)\vee (n-i)},\sigma _n] \Big) \\
 	&= \1\{j>m\} \E \left(\sigma _{n+1} - \sigma _n\right) \E \left(\sigma _{n+m+1} - \sigma _{n+m}\right) \\
  &\qquad \times \mathrm{Cov} \left(N^{(n-i)}(\sigma _{n-i},\sigma _n], N^{(n+m-j)}(\sigma _{(n+m-j)\vee (n-i)},\sigma _n] \right) \\
  &\leq \1\{j>m\} \mathrm{Cov} \left(N^{(n-i)}(\sigma _{n-i},\sigma _n], N^{(n+m-j)}(\sigma_{n-i} ,\sigma _n] \right) \\
 &=  \1\{j>m\} \mathrm{Var}\big( \sigma _n - \sigma _{n-i} \big) = i \1\{j>m\}.
\end{align*}
Applying Lemma \ref{covariance calc}
$(iii)$, we continue
\begin{align*}
 s_{13} &= \1\{j\geq m\} \mathrm{Cov} \Big( (\sigma _{n+1} - \sigma _n) N^{(n-i)}(\sigma _{n-i},\sigma _n], 
 	(\sigma _{n+m+1} - \sigma _{n+m}) N^{(n+m-j)}(\sigma _n,\sigma _{n+1}] \Big) \\
 	&= \1\{j\geq m\} \E(\sigma _{n+m+1} - \sigma _{n+m}) \E N^{(n-i)}(\sigma _{n-i},\sigma _n]  \mathrm{Cov} \Big( (\sigma _{n+1} - \sigma _n), N^{(n+m-j)}(\sigma _n,\sigma _{n+1} \Big) \\
 	&= \1\{j\geq m\} i \mathrm{Var} \big( \sigma_{n+1} - \sigma _n \big) = \1\{j\geq m\} i.
\end{align*}

In the same spirit by Lemma \ref{covariance calc} $(iv)$
\begin{align*}
 s_{23} &= \1\{j \geq m\} \mathrm{Cov} \Big( (\sigma _{n+1} - \sigma _n) N^{(n-i)}(\sigma _n,\sigma _{n+1}],  (\sigma _{n+m+1} - \sigma _{n+m}) N^{(n+m-j)}(\sigma _n,\sigma _{n+1}] \Big) \\
 &= \1\{j \geq m\} \E (\sigma _{n+m+1} - \sigma _{n+m}) \mathrm{Cov} \Big( (\sigma _{n+1} - \sigma _n) N^{(n-i)}(\sigma _n,\sigma _{n+1}], N^{(n+m-j)}(\sigma _n,\sigma _{n+1}] \Big) \\
 &= \1\{j \geq m\} \left[ \E (\sigma _{n+1} - \sigma _n)^3 - \E (\sigma _{n+1} - \sigma _n)^2 \right] = (3! - 2!) \1\{j \geq m\} = 4 \1\{j \geq m\}.
\end{align*}
The computations started from \eqref{grovel} imply that 
\begin{equation}\label{nitty-gritty}
	\cov(i,j) \leq (2i +4 ) \1\{j \geq m\}
\end{equation}
provided $j \ne m+i$.
 
If $j = m+i$ then
\begin{align}\label{surreptitious}
	\cov(i,j) &= \mathrm{Cov}\Big( (\sigma _{n+1} - \sigma _n)N^{(n-i)}(\sigma _{n-i},\sigma _{n+1}], (\sigma _{n+m+1} - \sigma _{n+m})N^{(n-i)}(\sigma _{n-i},\sigma _{n+m+1}] \Big) \notag \\
		&= \mathrm{Cov}\Big( (\sigma _{n+1} - \sigma _n)N^{(n-i)}(\sigma _{n-i},\sigma _{n+1}], (\sigma _{n+m+1} - \sigma _{n+m})N^{(n-i)}(\sigma _{n-i},\sigma _{n+1}] \Big) + 0  \notag \\
 		&= \mathrm{Cov}\Big( (\sigma _{n+1} - \sigma _n)N^{(n-i)}(\sigma _{n-i},\sigma _{n+1}], N^{(n-i)}(\sigma _{n-i},\sigma _{n+1}] \Big)  \notag \\
 		&= \mathrm{Cov}\Big( (\sigma _{n+1} - \sigma _n) \Big\{ N^{(n-i)}(\sigma _{n-i},\sigma _n] + N^{(n-i)}(\sigma _n,\sigma _{n+1}] \Big\},  \notag \\ 
	&\qquad\qquad \Big\{ N^{(n-i)}(\sigma _{n-i},\sigma _n] + N^{(n-i)}(\sigma _n,\sigma _{n+1}] \Big\} \Big) \notag \\
  &= 2 + 6 - 2 + i + 2i = 3i + 6
\end{align}
 by Lemma \ref{covariance calc}
 $(v)$ where we can take $\beta = \sigma _{n+1} - \sigma _n$,
 $X = N^{(n-i)}(\sigma _n,\sigma _{n+1}]$ and $Y = N^{(n-i)}(\sigma _{n-i},\sigma _n]$.
  Note that
 $\E N^{(n-i)}(\sigma _{n-i},\sigma _n] = 
 \E (\sigma _n - \sigma _{n-i}) = i$,
 \[
 \Ec{ \left( N^{(n-i)}(\sigma _n,\sigma _{n+1}]\right) ^2}{(\sigma _n,\sigma _{n+1}]}
= \left( \sigma _{n+1} - \sigma _n \right) ^2
+
 \sigma _{n+1} - \sigma _n,
 \]
and 
\[
\mathrm{Var}(N^{(n-i)}(\sigma _{n-i},\sigma _n]) = \Ec{\left(N^{(n-i)}(\sigma _{n-i},\sigma _n]\right) ^2}{\sigma _n - \sigma_{ n-i}}  - 
\left( \EE{N^{(n-i)}(\sigma _{n-i},\sigma _n]} \right) ^2
\]
\[
 = \EE{ \left(\sigma _n - \sigma_{ n-i} \right) ^2 + \sigma _n - \sigma_{ n-i}} 
-
\left( \EE{\sigma _n - \sigma_{ n-i}} \right) ^2 = 
\EE{  \sigma _n - \sigma_{ n-i}} + \mathrm{Var}(\sigma _n - \sigma_{ n-i}) = 2i.
\]

 In conjunction with \eqref{nitty-gritty},
 \eqref{surreptitious} allows us to estimate 
  $\mathrm{Cov}(Z_n, Z_{n+m})$.
  Recalling \eqref{stud}, we get
  \begin{equation}\label{surly}
   \begin{gathered}
    \mathrm{Cov}(Z_n, Z_{n+m}) 
    = \sum\limits _{i,j \in \N }
 \frac{1}{i^\alpha j^\alpha} 
 \cov(i,j)  \leq  \sum\limits _{i,j \in \N , i \ne j}
 \frac{(2i +4 ) \1\{j \geq m\}}{i^\alpha j^\alpha} 
 + \sum\limits _{i \in \N}
 \frac{3i + 6}{i^\alpha (i+m)^\alpha}
 \\
 \leq 
 \sum\limits _{i \in \N}
 \frac{2i +4  }{i^\alpha } 
 \sum\limits _{ j = m} ^{\infty}
 \frac{1}{ j^\alpha} + 
  \frac{1}{m^\alpha}
 \sum\limits _{i \in \N}
 \frac{3i + 6}{i^\alpha} .
   \end{gathered}
  \end{equation}
Since $\sum\limits _{ j = m} ^{\infty}
 \frac{1}{ j^\alpha} = O(\frac{1}{m^{\alpha - 1}})$ as $m \to \infty$,
 \eqref{surly} implies \eqref{rebut}.
 The statement of $(i)$ follows
 from \eqref{rebut}
 and
 the strong law of large numbers 
for dependent random variables,
see e.g.
Hu,  Rosalsky, and Volodin \cite{HRV08},
or Corollary 11 of
Lyons \cite{L88}.

 \end{proof}

 Let $\Delta M _n = M_{n+1} - M_n$, 
 and let $\Delta X _n $ and $\Delta Q _n $
 be defined in the same way.
 In the following proposition we establish finiteness
 of a moment of the martingale difference $\Delta M _n$. Later on this allows 
us to apply a strong law of large numbers for martingales to $M_n$. 
 
 \begin{prop} \label{fluty}
  Let $p \in (1,  (\alpha  - 1)) \cap (1,2]$. Then
   $ \E \left| \Delta M _n \right| ^p $
   is bounded uniformly in $n$.
 \end{prop}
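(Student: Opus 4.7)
The plan is to apply a Bichteler--Jacod / Novikov-type inequality for the $p$-th moment of a stochastic integral against a compensated Poisson random measure. By the representation \eqref{a presto sim a dopo}, $M_t$ is such an integral with jump sizes $m$ and intensity $b^{(d)}(\tip{\eta_{s-}} - m, \eta_{s-})$, so for $p \in (1,2]$ the inequality applied on the time interval $(n, n+1]$ gives
\[
\E\,|\Delta M_n|^p \leq C_p\, \E \int_n^{n+1} \sum_{m \in \N} m^p\, b^{(d)}(\tip{\eta_{s-}} - m, \eta_{s-})\, ds.
\]
The problem thus reduces to bounding this integrated $p$-th moment intensity uniformly in $n$.

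Using the estimate $b^{(d)}(\tip{\eta_{s-}} - m, \eta_{s-}) \leq \sum_{k \geq 0} \xi_{s-}(k)/(m+k)^{2\alpha}$ (obtained by dropping the $1 \wedge$ in \eqref{motile}), together with Proposition \ref{astute} and Lemma \ref{rambunctious kitty} applied to the non-increasing coefficients $k \mapsto (m+k)^{-2\alpha}$, and the fact that $\zeta_t \overset{d}{=} \bar\zeta_t$, I replace $\xi_{s-}$ by $\bar\zeta_{s-}$ in expectation. After swapping $m$- and $k$-sums by Fubini (all terms nonnegative) and using the elementary estimate $\sum_{m \geq 1} m^p/(m+k)^{2\alpha} \leq C(k+1)^{p+1-2\alpha}$, valid since $p < \alpha - 1 < 2\alpha - 1$, the task becomes
\[
\E\,|\Delta M_n|^p \leq C \int_n^{n+1} \sum_{k \geq 0} \E\,\bar\zeta_{s-}(k)\,(k+1)^{p+1-2\alpha}\, ds.
\]

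The key remaining ingredient is the refined moment bound $\E\,\bar\zeta_s(k) \leq C(1 + \min(s, k))$, strictly stronger than the coarser $1+s$ estimate used in Lemma \ref{semblance}. By representation \eqref{long-winded}, $\E\,\bar\zeta_s(k) = 1 + \E[s - \sigma_{(\tilde N_s - k)_+}]$, and the quantity $s - \sigma_{(\tilde N_s - k)_+}$ is the elapsed time back to the $(k+1)$-th most recent jump of the rate-$1$ Poisson process $\tilde N$ (or $s$ if fewer than $k+1$ jumps have occurred); it is stochastically dominated by a $\operatorname{Gamma}(k+1, 1)$ random variable obtained by extending $\tilde N$ backwards with independent $\operatorname{Exp}(1)$ inter-arrival times, so its mean is at most $\min(s, k+1)$. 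Splitting the $k$-sum at $k \approx s$: the part $k \leq s$ contributes at most $C\sum_{k \geq 0} (k+1)^{p+2-2\alpha}$, which converges because for $\alpha > 2$ and $p < \alpha - 1$ one has $p + 2 - 2\alpha < -1$ (equivalently $p < 2\alpha - 3$, implied by $p < \alpha - 1$ together with $\alpha > 2$); the part $k > s$ contributes at most $C(1+s)\cdot s^{p+2-2\alpha} = Cs^{p+3-2\alpha}$, which is bounded (and in fact vanishes as $s \to \infty$) under the same condition. Uniform boundedness of $\E|\Delta M_n|^p$ in $n$ follows. The main obstacle is the refined $\min(s,k)$ estimate on $\E\bar\zeta_s(k)$: the naive $1+s$ bound would produce a factor of order $n$ in the final estimate that cannot be absorbed by the polynomial decay in $k$.
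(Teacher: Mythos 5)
Your proof is correct, and it is in fact more careful at the one place where the paper's own argument is thin. Both proofs start from the same Bichteler--Jacod/Novikov estimate, which reduces $\E|\Delta M_n|^p$ to bounding $\int_n^{n+1}\sum_m m^p\,\E\bigl(1\wedge\sum_k \xi_{s-}(k)/(m+k)^{2\alpha}\bigr)\,ds$. The paper then asserts $\E\xi_{s-}(k)\leq k+1$ for all $s$, justifying it by the remark that $(\eta_t(k)-\eta_0(k))_t$ is dominated by a Poisson process; read literally this gives only the $s$-dependent bound $\E\eta_s(k)\leq 1+s$, and the step to the $k$-dependent, $s$-uniform bound on $\E\xi_{s-}(k)$ is left implicit. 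You supply exactly the missing argument: route through the stochastic domination $\xi\preceq\zeta\overset{d}{=}\bar\zeta$ via Proposition \ref{astute} and Lemma \ref{rambunctious kitty} (legitimate because $k\mapsto(m+k)^{-2\alpha}$ is non-increasing), use the representation \eqref{long-winded}, and identify $s-\sigma_{(\tilde N_s-k)_+}$ as the age of the $(k+1)$-th most recent jump, dominated by a $\mathrm{Gamma}(k+1,1)$ variable. You also evaluate the $m$-sum sharply, $\sum_m m^p(m+k)^{-2\alpha}\preceq(k+1)^{p+1-2\alpha}$, whereas the paper factors crudely via $(m+k)^{2\alpha}\geq m^\alpha(k+1)^\alpha$; your route gives convergence under the weaker condition $p<2\alpha-3$, the paper's under $p<\alpha-1$, and both are satisfied in the stated range because $\alpha>2$.

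One small remark: the $\min(s,k)$ refinement and the ensuing split of the $k$-sum at $k\approx s$ are not actually needed. The Gamma argument already gives the $s$-uniform bound $\E\bar\zeta_s(k)\leq 1+\E\min(s,\mathrm{Gamma}(k+1,1))\leq k+2$, and substituting this directly yields $\sum_k(k+1)\cdot(k+1)^{p+1-2\alpha}=\sum_k(k+1)^{p+2-2\alpha}<\infty$ for $p<2\alpha-3$, with no splitting. So the genuine obstacle is not the refinement to $\min(s,k)$ but simply replacing the naive $1+s$ bound by a bound that is uniform in $s$ and grows only like $k$; your argument delivers that, and the $\min$ is a harmless extra.
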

 \begin{proof}
 By \eqref{a presto sim a dopo},
 \begin{equation}
  \Delta X _n = \int\limits _{s \in (n,n+1], m \in \N, u \in \R _+}
  m \1 \Big\{ u \leq 1\wedge
   \sum\limits _{k \in \Z _+} 
 \frac{ \xi _{s-} (k)}{(k+m)^{2\alpha} }
 \Big\} N ^{(X)}(ds dm du)
 \end{equation}
 Note that for every $k \in \Z$ and $s \geq 0$, $\E \eta _{s} (k) \leq k+1$ because $(\eta _{t} (k) - \eta _{0} (k))_{t\geq 0}$ is dominated by a Poisson process, and consequently also 
 \[
	 \E \xi _{s-} (k)  \leq k +1.
 \]

 Novikov's inequalities  for discontinuous martingales (also known as `Bichteler-Jacod's inequalities'; see Novikov \cite{Nov75}, or Marinelli and R\"ockner \cite{MR14} for generalizations and historical discussions) give 
 \begin{align}\label{clemency}
  \E | \Delta M _n |^p &= \E \Big| \Delta X _n - \int\limits _{\substack{s \in (n,n+1], m \in \N, \\ u \in \R _+}}
  		m \1 \Big\{ u \leq 1\wedge \sum\limits _{k \in \Z _+} \frac{ \xi _{s-} (k)}{(k+m)^{2\alpha} } \Big\} ds \#(dm) du \Big| ^p \nonumber \\
  	&\leq C\, \E \int\limits _{\substack{s \in (n,n+1], m \in \N, \\ u \in \R _+}} m^p \1 \Big\{ u \leq 1\wedge \sum\limits _{k \in \Z _+} \frac{ \xi _{s-} (k)}{(k+m)^{2\alpha} } \Big\} ds \#(dm) du \nonumber \\
 		&= C \int\limits _{n} ^{n+1}ds \sum\limits _{m \in \N } m^p \Big( 1\wedge \sum\limits _{k \in \Z _+} \frac{\E \xi _{s-} (k)}{(k+m)^{2\alpha} }\Big) \leq  C \int\limits _{n} ^{n+1}ds \sum\limits _{m \in \N } m^p \sum\limits _{k \in \Z _+ } \frac{k + 1}{(k+1)^{\alpha} m^{\alpha} } \nonumber \\
 &= C \sum\limits _{m \in \N } \frac{1}{m^{\alpha - p}} \times \sum\limits _{k \in \N } \frac{1}{k^{\alpha - 1}}.
 \end{align}
Hence
\begin{equation*}
 \E \left| \Delta M _n \right| ^p < C < \infty,
\end{equation*}
where $C$ does not depend on $n$.
\end{proof}

We are now ready to prove the main result of this section. 
We will need to following form of the strong law of large numbers for martingales,
which is an abridged version of \cite[Theorem 2.18]{HH80}.

\begin{thm}\label{thm 2.18 HH80}
   Let $\{S_n = \sum\limits _{i = 1} ^n x _i , n \in \N  \}$ be an $\{\mathscr{F}_n\}$-martingale
   and $\{U_n\}_{n \in \N }$ be a non-decreasing sequence of positive real numbers, $\lim\limits_{n \to \infty} U_n = \infty$.
   Then for $p \in [1,2]$ we have
   \[
   \lim\limits_{n \to \infty } U _n ^{-1} S _n = 0
   \]
   a.s. on the set $\left\{ \sum\limits _{i = 1} ^\infty U _n ^{-p} \Ec{|x_i|^p}{\mathscr{F}_{i-1}} < \infty  \right\}$.
\end{thm}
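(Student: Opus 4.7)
The plan is to reduce the theorem to the almost-sure convergence of an auxiliary martingale and then apply Kronecker's lemma pathwise. Introduce
\[
T_n := \sum_{i=1}^n U_i^{-1} x_i, \qquad n \in \N,
\]
which is an $\{\mathscr{F}_n\}$-martingale because the $U_i$ are deterministic. If one shows that $T_n$ converges a.s.\ on
\[
A := \Big\{ \sum_{i=1}^\infty U_i^{-p} \Ec{|x_i|^p}{\mathscr{F}_{i-1}} < \infty \Big\},
\]
then, since $U_n$ is non-decreasing with $U_n \to \infty$, Kronecker's lemma applied pathwise to the numerical sequences $(U_i(\omega))$ and $(x_i(\omega))$ yields $U_n^{-1} S_n \to 0$ on $A$.

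To establish a.s.\ convergence of $(T_n)$ on $A$ I would employ a predictable localization. For each $K \in \N$ define
\[
\tau_K = \inf\Big\{ n \in \N : \sum_{i=1}^{n+1} U_i^{-p} \Ec{|x_i|^p}{\mathscr{F}_{i-1}} > K \Big\}.
\]
Since $\{\tau_K \leq n-1\}$ is $\mathscr{F}_{n-1}$-measurable, $\tau_K$ is a predictable stopping time and $A = \bigcup_{K \in \N} \{\tau_K = \infty\}$. For $p \in [1,2]$ the Burkholder--Davis--Gundy inequality together with the elementary bound $(\sum a_i^2)^{p/2} \leq \sum |a_i|^p$ (valid because $p/2 \leq 1$) gives
\[
\E | T_n^{\tau_K}|^p \leq C_p \E \sum_{i=1}^{n\wedge \tau_K} U_i^{-p} |x_i|^p = C_p \E \sum_{i=1}^n U_i^{-p} \Ec{|x_i|^p}{\mathscr{F}_{i-1}}\1\{i \leq \tau_K\} \leq C_p K,
\]
where the equality uses the $\mathscr{F}_{i-1}$-measurability of $\{i \leq \tau_K\}$ and the final inequality uses the definition of $\tau_K$. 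The stopped process $T^{\tau_K}$ is therefore an $L^p$-bounded martingale, hence converges a.s.\ by Doob's martingale convergence theorem. Since $T_n = T_n^{\tau_K}$ on $\{\tau_K = \infty\}$, the sequence $(T_n)$ converges a.s.\ on $A$, which by the Kronecker step completes the proof.

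The main obstacle is exactly the placement of the localization: the hypothesis provides only a pathwise conditional summability on $A$, not an integrable bound, so one must convert this into an honest $L^p$-estimate on a stopped martingale before classical tools become applicable. Once this reduction is in place, the remaining ingredients---Burkholder--Davis--Gundy (Davis' inequality covering the borderline case $p=1$), Doob's convergence theorem, and Kronecker's lemma---are standard; in the special case $p=1$ one can even bypass BDG entirely, as the same $\tau_K$-localization shows $\E\sum_{i=1}^n U_i^{-1}|x_i|\1\{i\leq \tau_K\} \leq K$, giving absolute convergence of $T_n$ on $A$ directly.
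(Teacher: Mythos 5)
Your proof is correct. A key contextual point: the paper does not actually prove this statement---it cites it verbatim as an abridged form of Theorem 2.18 in Hall and Heyde's \emph{Martingale Limit Theory and Its Application} (\cite{HH80}), and uses it as a black box in the proof of Theorem \ref{at most linear}. So you have supplied a proof where the paper has only a citation.

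As for the argument itself, the three-step structure (define $T_n = \sum_i U_i^{-1} x_i$, localize by the predictable stopping times $\tau_K$, conclude via Kronecker) is the standard route, and it is carried out cleanly here. The details check out: $\{\tau_K \le n-1\}$ is $\mathscr{F}_{n-1}$-measurable because the partial sums $\sum_{j\le n}U_j^{-p}\Ec{|x_j|^p}{\mathscr{F}_{j-1}}$ are $\mathscr{F}_{n-1}$-measurable, so $\1\{i\le\tau_K\}$ is $\mathscr{F}_{i-1}$-measurable and the tower property gives the identity inside the expectation; by construction $\sum_{i\le n\wedge\tau_K}U_i^{-p}\Ec{|x_i|^p}{\mathscr{F}_{i-1}}\le K$ always; BDG (Davis for $p=1$) plus the subadditivity $(\sum a_i)^{p/2}\le\sum a_i^{p/2}$ for $p\le 2$ gives the uniform $L^p$ bound on $T^{\tau_K}$; and a martingale bounded in $L^p$ for $p\in[1,2]$ converges a.s.\ by Doob (for $p=1$, $L^1$-boundedness suffices for a.s.\ convergence even without uniform integrability). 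Finally, $A=\bigcup_K\{\tau_K=\infty\}$ and Kronecker's lemma applied pathwise closes the argument. The remark that for $p=1$ one can sidestep BDG entirely via $\E\sum_i U_i^{-1}|x_i|\1\{i\le\tau_K\}\le K$, yielding absolute convergence of $T_n$ directly, is also correct and arguably the simplest route in that case. Hall and Heyde's own proof proceeds by essentially the same localization-plus-Kronecker scheme, though they invoke Chow's conditional moment convergence theorem in place of BDG; the two are interchangeable here, and your choice of BDG makes the borderline cases $p=1,2$ transparent.
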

 \begin{thm}[Linear speed]\label{at most linear}
	There exists $\bar C >0$
	such that a.s.
	\begin{equation}
	X_t \leq \bar C t
	\end{equation}
	for sufficiently large $t$.
\end{thm}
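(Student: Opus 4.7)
The strategy is to combine the two ingredients that have just been built: Proposition~\ref{dibber}(ii), which shows $Q_t \leq Ct$ almost surely for large $t$, and Proposition~\ref{fluty}, which controls the $L^p$ norm of the martingale differences $\Delta M_n = M_{n+1}-M_n$. Since $X_t = Q_t + M_t$ by \eqref{racy2}, it suffices to show that $M_t/t \to 0$ almost surely, and then transfer the conclusion from integer times to all times.

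First I would treat integer times via Theorem~\ref{thm 2.18 HH80} applied to the $\{\mathscr{F}_n\}$-martingale $\{M_n\}_{n \in \N}$ (note that $(M_t)$ is a true martingale by Lemma~\ref{semblance}). Choose $p \in (1, \alpha-1) \cap (1,2]$ as in Proposition~\ref{fluty}, and take $U_n = n$. Because $p > 1$, we have $\sum_{n \in \N} n^{-p} < \infty$, and hence
\begin{equation*}
\E \sum_{n \in \N} n^{-p} \Ec{|\Delta M_n|^p}{\mathscr{F}_{n}} = \sum_{n \in \N} n^{-p} \E |\Delta M_n|^p \leq C \sum_{n \in \N} n^{-p} < \infty,
\end{equation*}
so the series inside the expectation is finite a.s. Theorem~\ref{thm 2.18 HH80} then yields $M_n / n \to 0$ almost surely.

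Next I would pass from integer times to all $t \geq 0$ using monotonicity. The process $X_t = -\tip{\eta_t}$ is non-decreasing in $t$ (the tip can only move leftward once an outlier birth occurs), and $Q_t$ is non-decreasing by construction. Hence for $t \in [n, n+1]$,
\begin{equation*}
X_t \leq X_{n+1} = Q_{n+1} + M_{n+1}.
\end{equation*}
Fix $\eps > 0$. By Proposition~\ref{dibber}(ii) there exists $C > 0$ such that a.s.\ for sufficiently large $n$, $Q_{n+1} \leq C(n+1)$, and by the step above, a.s.\ for sufficiently large $n$, $|M_{n+1}| \leq \eps (n+1)$. Combining these,
\begin{equation*}
X_t \leq (C+\eps)(n+1) \leq 2(C+\eps) t
\end{equation*}
for all $t$ large enough, which gives the result with $\bar C = 2(C+\eps)$.

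I do not expect any genuine obstacle at this stage, since the hard work is encapsulated in Propositions~\ref{dibber} and~\ref{fluty}: the only real care needed is that the hypothesis of Theorem~\ref{thm 2.18 HH80} is verified using the \emph{uniform} $L^p$ bound on $\Delta M_n$ (which is what makes $\sum n^{-p} \E|\Delta M_n|^p$ finite), and that the jump of $X$ between integer times is controlled by monotonicity rather than by any further martingale estimate.
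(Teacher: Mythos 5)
Your proposal is correct and follows essentially the same route as the paper: decompose $X_t = Q_t + M_t$, invoke Proposition~\ref{dibber}(ii) for $Q$, establish $M_n/n \to 0$ a.s.\ via Theorem~\ref{thm 2.18 HH80} using the uniform $L^p$ bound of Proposition~\ref{fluty} (the paper's specific choice $p = \tfrac{\alpha}{2}\wedge 2$ lies in your interval $(1,\alpha-1)\cap(1,2]$), and finally extend from integer to real times by monotonicity of $X_t$. The only cosmetic difference is an index slip: the conditional expectation in Theorem~\ref{thm 2.18 HH80} should be with respect to $\mathscr{F}_{n-1}$ (or $\mathscr{F}_n$ if $\Delta M_n := M_{n+1}-M_n$, matching the filtration index to the left endpoint of the increment), but this does not affect the argument since you immediately take the unconditional expectation.
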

 \begin{proof}
Note that a.s.
\[
	\sum \frac{\mathbb{E}\left( |\Delta M_n|^p \vert \mathcal{F}_{n-1} \right)}{n ^p} < \infty,
\]
since by Proposition \ref{fluty} 
\begin{equation}\label{for Pasha}
	\E \sum \frac{\mathbb{E}\left( |\Delta M_n|^p \vert \mathcal{F}_{n-1} \right)}{n ^p} = \sum \frac{\mathbb{E} |\Delta M_n|^p  }{n ^p} < \infty.  
\end{equation}
Then Proposition \ref{fluty} and
 Theorem \ref{thm 2.18 HH80}, where we take $S_n = M_n$, 
 $U_n = n$, and $p =\frac{\alpha}{ 2} \wedge 2$, imply that a.s.
\begin{equation}
	\frac{M_n}{n} \to 0,  \quad n \in \N.
\end{equation}
Hence Proposition \ref{dibber}, $(ii)$,  yields that a.s. for large $n$ 
\begin{equation}\label{walk out on}
 \frac{X_n}{n}  = \frac{M_n}{n} + 
 \frac{Q_n}{n} \leq C_{_X} ,
\end{equation}
 where $C_{_X} >0$ is independent of $n$.
 
 Since $X_t$ is non-decreasing, \eqref{walk out on}
 holds for continuous parameter too if we increase 
 the constant: a.s.
 for large $t$, 
 \begin{equation}\label{walk out on2}
 \frac{X_t}{t}  \leq C_{_X} +1  .
\end{equation}
\end{proof}

\section{Superlinear growth for $\alpha \in (\frac 12, 2]$ in the discrete-space settings} \label{section micro alpha less than 2}

Our aim in this section is to prove the discrete-space equivalent of Theorem  \ref{thm superlinear growth alpha < 2}.
This is done in Theorem \ref{thm superlinear growth alpha <2 discrete}.
In Section \ref{section micro cont} we use Theorem \ref{thm superlinear growth alpha <2 discrete}
to prove Theorem \ref{thm superlinear growth alpha < 2}.
 The idea of the proof of Theorem \ref{thm superlinear growth alpha <2 discrete}
  is to find a certain system growing slower than our system, and then estimate the probability of
births outside an interval linearly growing with time.

Let $(\eta _t)$ be the  birth process on $\Z _+ ^{\Z }$
with birth rate \eqref{motile}, \eqref{toad},
but with $\alpha \in (\frac 12, 2]$.
As in Section \ref{section micro discr},
$(\eta _t)$ can be obtained as the unique solution 
to \eqref{se}. 
We focus here on the positive half line because it is sufficient for our purposes.

The next lemma has an auxiliary character and is a straightforward application of the large deviations principle.

\begin{lem} \label{lem large dev Poiss}
	Let $\chi$ be a Poisson random variable with mean $\lambda >0$.
	Then for large $\lambda$,
	\begin{equation}
	\P (\chi \leq \frac \lambda 3) \leq e^{-\frac \lambda 6 }.
	\end{equation}
\end{lem}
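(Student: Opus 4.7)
The plan is to use the exponential Markov (Chernoff) inequality, which for Poisson random variables gives a sharp large-deviation bound well beyond what is actually needed here. Since the target bound $e^{-\lambda/6}$ is quite loose compared to the true Cramér rate function for the Poisson at $\lambda/3$, essentially any reasonable choice of the Chernoff parameter will succeed, and the claim should in fact hold for all $\lambda>0$ (the qualifier ``for large $\lambda$'' in the statement is just slack).

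Concretely, I would first recall the Laplace transform $\E e^{-\theta\chi} = \exp\bigl(\lambda(e^{-\theta}-1)\bigr)$ for $\theta\geq 0$, and apply Markov's inequality to the non-negative random variable $e^{-\theta\chi}$: for every $\theta>0$,
\[
\P\bigl(\chi \leq \tfrac{\lambda}{3}\bigr)
=\P\bigl(e^{-\theta\chi} \geq e^{-\theta\lambda/3}\bigr)
\leq \exp\bigl(\tfrac{\theta\lambda}{3} + \lambda(e^{-\theta}-1)\bigr).
\]
I would then optimize (or just pick a convenient value). The minimizer is $e^{-\theta}=\tfrac{1}{3}$, i.e.\ $\theta=\ln 3$, yielding the exponent
\[
\lambda\Bigl(\tfrac{\ln 3}{3} + \tfrac{1}{3} - 1\Bigr) = \lambda\cdot\tfrac{\ln 3 - 2}{3}.
\]

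The final step is to check that $\tfrac{\ln 3 - 2}{3} \leq -\tfrac{1}{6}$. This is equivalent to $\ln 3 \leq \tfrac{3}{2}$, which holds since $\ln 3 \approx 1.0986 < 1.5$. Hence $\P(\chi\leq \lambda/3)\leq e^{-\lambda/6}$ for every $\lambda>0$, which in particular gives the claim for sufficiently large $\lambda$.

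There is no real obstacle: the only delicate point is picking $\theta$ so that the resulting exponent is comfortably below $-1/6$, and $\theta=\ln 3$ works with room to spare. If one prefers a cleaner constant one may alternatively take $\theta=1$, giving the exponent $\lambda(\tfrac{1}{3}+e^{-1}-1)=\lambda(e^{-1}-\tfrac{2}{3})\approx -0.299\,\lambda$, again well below $-\lambda/6$.
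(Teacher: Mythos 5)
Your proof is correct, and it takes a different (and in fact more elegant) route than the paper. The paper first reduces to integer $\lambda$, writes $\chi$ as a sum of $\lambda$ i.i.d.\ Poisson(1) variables, and invokes the abstract large deviations principle to get the Cram\'er rate function $\Lambda^*(x)=x\ln x - x + 1$, whose infimum on $[0,\tfrac13]$ is $\tfrac{2-\ln 3}{3}$ --- the very same exponent you obtain. But because the LDP is only an asymptotic $\limsup$ statement, the paper carries an $o(\lambda)$ error and must therefore restrict to ``large $\lambda$'' and separately patch the case $\lambda\notin\N$ by floor-rounding. Your Chernoff computation sidesteps all of this: the bound
\[
\P\bigl(\chi\leq\tfrac{\lambda}{3}\bigr)\leq\exp\bigl(\tfrac{\theta\lambda}{3}+\lambda(e^{-\theta}-1)\bigr)
\]
is an exact, non-asymptotic inequality valid for every real $\lambda>0$, so once one checks $\tfrac{\ln 3-2}{3}\leq -\tfrac16$ (equivalently $\ln 3\leq\tfrac32$) the claim follows for all $\lambda>0$, making the ``for large $\lambda$'' qualifier in the statement unnecessary, exactly as you observe. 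What the paper's route buys is that it is a near-mechanical instance of a citable general theorem; what yours buys is a shorter, self-contained, sharper argument that avoids both the integrality reduction and the asymptotics.
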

\begin{proof}
	Assume first that $\lambda \in \N$. Then 
	\[
	\chi \overset{d}{=} \chi _1 + ... + \chi _\lambda, 
	\]
	where $\chi_1, \chi _2, ...$ are i.i.d Poisson random variables with mean $1$.
	The cumulant-generating function of $\chi _1$ is 
	$
	\Lambda (u) = e^u - 1,
	$
	and the corresponding rate function 
	\[
	\Lambda ^ * (x) = \sup\limits _{u \in \R } (ux - \Lambda(u)) = x \ln x - x + 1, \ \ \ x \geq 0.
	\]
	
	By the large deviations principle, see e.g. \cite[Theorem 27.5]{KallenbergFound},

	\[
	\limsup\limits _{\lambda \to \infty} \lambda ^{-1} \ln \P (\frac1 \lambda \sum\limits_{i = 1} ^ \lambda \chi _i \leq \frac 1 3) 
	\leq - \inf\limits _{x \in [0, \frac 13]} \Lambda ^ * (x) 
 = - \frac  {2 - \ln 3}{3}.  
	\]
	Hence for large $\lambda$
	\[
	\ln \P (\chi \leq \frac \lambda 3) \leq - \frac  {2 - \ln 3}{3} \lambda + o(\lambda) < - \frac  {\lambda}{4}, 
	\]
	which gives the desired result for $\lambda \in \N$.
	The statement for $\lambda \notin \N$ follows by considering a Poisson random variable 
	with mean $ \lfloor \lambda \rfloor$ and noting that for large $\lambda$,
	$\frac  {\lfloor \lambda \rfloor}{4} > \frac \lambda 6$.
	
\end{proof}

In the next lemma it is shown that $\eta _t$ dominates a `rectangle-like' 
configuration, at least for large $t$.

\begin{lem}\label{slovenly}
	A.s. for sufficiently large $t$
	\begin{equation}
	\eta _t (x) \geq  \frac {t}{10}. 
	\end{equation}
	for all $x \in \Z \cap [0, \frac t4]$.
\end{lem}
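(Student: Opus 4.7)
The plan is to exploit two structural features of the model: first, $a^{(d)}(1) = 1$ combined with the cap in \eqref{motile} means that once site $x-1$ is occupied the birth rate at $x$ equals exactly $1$; second, by \eqref{detriment} the birth rate at $x$ remains identically $1$ once $\eta_s(x) \geq 1$. Consequently, occupation propagates through consecutive sites at approximately unit speed, and once a site is occupied its particle count accumulates as a Poisson process of rate $1$. This suggests the strategy: bound the time $\tau_x$ at which each site $x \in [0, t/4]$ is first occupied, then use Poisson concentration to lower-bound $\eta_t(x)$.

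First I would set $\tau_x = \inf\{s \geq 0 : \eta_s(x) \geq 1\}$ (so $\tau_0 = 0$) and for $x \geq 1$ introduce $\xi_x$, the waiting time from $\tau_{x-1}$ until the next arrival of the Poisson process $N^{(x)}$. By the strong Markov property of the driving Poisson point process $\mathbf{N}$, $\xi_x \sim \mathrm{Exp}(1)$ and is independent of $\mathscr{F}_{\tau_{x-1}}$, so $(\xi_x)_{x \geq 1}$ is i.i.d. Since on $(\tau_{x-1}, \infty)$ every arrival of $N^{(x)}$ is a genuine birth at $x$, one has $\tau_x \leq \tau_{x-1} + \xi_x$, and therefore $\max_{0 \leq y \leq M}\tau_y \leq \xi_1 + \cdots + \xi_M$. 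A Chernoff bound for sums of i.i.d.\ $\mathrm{Exp}(1)$ variables then gives $\P(\xi_1 + \cdots + \xi_M > 2 M) \leq e^{-c_1 M}$ for some absolute $c_1 > 0$.

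Next I would invoke \eqref{detriment}: for $s \geq \tau_x$ the birth rate at $x$ is identically $1$, so conditional on $\mathscr{F}_{\tau_x}$ the number of births at $x$ during $(\tau_x, t]$ is $\mathrm{Poisson}(t - \tau_x)$, in particular $\eta_t(x) \geq 1 + \mathrm{Poisson}(t - \tau_x)$. Lemma \ref{lem large dev Poiss} applied with $\lambda = n - \tau_x$ will yield, on the event $\{\tau_x \leq 3n/5\}$,
\[
\P\!\left( \eta_n(x) \leq 1 + (n - \tau_x)/3 \,\bigl|\, \mathscr{F}_{\tau_x}\right) \leq e^{-(n-\tau_x)/6} \leq e^{-n/15}.
\]

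To conclude, set $M_n = \lfloor n/4 \rfloor + 1$ and define $E_n = \{\eta_n(x) \geq n/8 \text{ for every } x \in \mathbb{Z} \cap [0, M_n]\}$. Noting that $2 M_n \leq n/2 + 2 \leq 3n/5$ for $n \geq 20$, the two steps combined with a union bound over the $O(n)$ sites give
\[
\P(E_n^c) \;\leq\; \P\!\left(\xi_1 + \cdots + \xi_{M_n} > 3n/5\right) + (M_n + 1)\, e^{-n/15} \;\leq\; e^{-c_2 n}
\]
for some $c_2 > 0$ and all sufficiently large $n$, which is summable; Borel--Cantelli then gives that $E_n$ holds a.s.\ for all large $n$. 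For $t \in [n, n+1)$ and $x \in \mathbb{Z} \cap [0, t/4] \subset [0, M_n]$, monotonicity of $\eta_t(x)$ in $t$ gives $\eta_t(x) \geq \eta_n(x) \geq n/8 \geq t/10$ for $n \geq 4$, which is the claim. The hard part is Step 1: one must carefully justify that $(\xi_x)$ is i.i.d.\ $\mathrm{Exp}(1)$ via the strong Markov property for the multivariate Poisson process $\mathbf{N}$, along the lines of the construction set up in Section \ref{section micro discr}. Once that is done, the rest is a routine union-bound plus Borel--Cantelli argument.
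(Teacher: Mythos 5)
Your proposal is essentially sound and reaches the same conclusion, but via a genuinely different formalization than the paper's. The paper introduces an auxiliary process $(\gamma_t)$ with the nearest-neighbour birth rate \eqref{birth rate gamma}, observes that the tip $X^{(\gamma)}_t$ is exactly a rate-$1$ Poisson process, and uses the comparison $\gamma_t \le \eta_t$ to control occupation times; you instead bound the occupation times directly through exponential inter-arrival times of the individual Poisson clocks $N^{(x)}$. Both exploit the same two structural facts — $a^{(d)}(1)=1$ forces unit-rate nearest-neighbour propagation, and \eqref{detriment} forces unit-rate accumulation at any occupied site — and both finish with the same Poisson concentration (Lemma \ref{lem large dev Poiss}) and a Borel--Cantelli union bound. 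The paper's auxiliary-process route buys a cleaner packaging: the ``wave of occupation'' is the tip of a genuine Poisson process, so no hand-built i.i.d.\ argument is required, only the comparison lemma already available from \cite{shapenodeath}.

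There is, however, one gap in your argument that you should repair. You define $\tau_x = \inf\{s : \eta_s(x)\ge 1\}$ and $\xi_x$ as the waiting time from $\tau_{x-1}$ until the next arrival of $N^{(x)}$, and assert $(\xi_x)_{x\ge 1}$ is i.i.d.\ because each $\xi_x$ is independent of $\mathscr{F}_{\tau_{x-1}}$. The missing step is that $\xi_1,\dots,\xi_{x-1}$ must be $\mathscr{F}_{\tau_{x-1}}$-measurable for this to imply independence, and this is not automatic: since $a^{(d)}$ has unbounded range, the first-occupation times $\tau_x$ need \emph{not} be monotone in $x$ (site $x$ can be seeded directly from the origin before site $x-1$ is), so $\xi_j$, which depends on $N^{(j)}$ up to time $\tau_{j-1}+\xi_j \ge \tau_j$, may look into the future of $\mathscr{F}_{\tau_{x-1}}$. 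The fix is standard: replace $\tau_x$ by the \emph{strictly increasing} surrogate $\tilde\tau_0 = 0$, $\tilde\tau_x = \tilde\tau_{x-1} + \xi_x$ with $\xi_x$ the waiting time from $\tilde\tau_{x-1}$ until the next $N^{(x)}$ arrival. Then $\xi_1,\dots,\xi_{x-1}$ are $\mathscr{F}_{\tilde\tau_{x-1}}$-measurable (the $\tilde\tau$'s are ordered), the strong Markov property gives $\xi_x\sim\mathrm{Exp}(1)$ independent of $\mathscr{F}_{\tilde\tau_{x-1}}$, an induction using monotonicity of $\eta$ in time and $a^{(d)}(1)=1$ gives $\tau_x\le\tilde\tau_x$, and the rest of your Chernoff--Poisson--Borel--Cantelli argument goes through unchanged with $\tilde\tau_x$ in place of $\tau_x$.
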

\begin{proof}
	Let $(\gamma _t)$ be another birth process 
	with birth rate 
	\begin{equation}\label{birth rate gamma}
	b^{(\gamma)}(x,\eta) = 1 \wedge \left( \eta(x) + \eta(x-1) -1 \right)_+, \ \ \ x \in \Z ^1, \eta \in \Z _+ ^{\Z}.
	\end{equation}
	where $\kappa _+  = \max(\kappa, 0)$, and the initial condition $
	\gamma _0(k) = \1 \{ k = 0 \}$, $ k \in \Z  $. Alternatively, 
	\begin{equation}
	b^{(\gamma)}(x,\eta) = \begin{cases}
	1, \text{ if  }\eta(x) + \eta(x-1) > 0, 
	\\
	0, \text{ otherwise .}
	\end{cases}
	\end{equation}
	The process $(\gamma _t)$ can be obtained as a unique solution to \eqref{se}
	with the birth rate $b^{(\gamma)}$ instead of  $b^{(d)}$.

	We have 
	\begin{equation}\label{faucet}
	b^{(\gamma)}(x,\eta) \leq b^{(d)}(x,\eta), \ \ \ x \in \Z, \eta \in \Z _+ ^{\Z}.
	\end{equation}
	Using \eqref{faucet}, it is not difficult to show that a.s. for all $t\geq 0$,
	\begin{equation}\label{sulky}
	\gamma _t \leq \eta _t.
	\end{equation}
	In the continuous-space settings, the fact that \eqref{faucet}
	implies \eqref{sulky} is proven in \cite[Lemma 5.1]{shapenodeath}.
	In our case here we can take exactly the same proof.

	Let $\tau (n) := \inf \{t: \gamma _t(n) > 0 \}$
	be the time when $n \in \Z _+$ becomes occupied for $(\gamma_t)$.
	 Note that a.s. 
	$\tau (1) < \tau(2)< \dots $.
	Let $X^{(\gamma)}_t: = \max\{n: \gamma_t (n) > 0 \}$
	be the position of the rightmost occupied site for $(\gamma_t)$.
	The process
	$(X^{(\gamma)}_t)$ is a counting Markov process (that is, having  unit jumps only)
	and with jump rate constantly being $1$. 
	Therefore $( X^{(\gamma)}_t)$ is a  Poisson process
	whose $n$-th jump time coincides with  
	$\tau (n)$.
	By the law of large numbers, a.s.  for large $n$,
	\begin{equation*}
	\tau (n) < \frac 54 n.
	\end{equation*}

	Therefore, a.s. for large $n$  for $x \in \{0,1,...,n\}$, $t \in [\frac 54 n, 2n]$, we have  $b^{(\gamma)}(x,\eta_t) = 1$.
	By \eqref{se} (recall that the birth rate for $( \gamma_t)$ is $b^{(\gamma)}$  instead of $b^{(d)}$  )
	$$
	\gamma _{2n} (x) - \gamma_{\frac 54 n} (x) = \mathbf{N}\left(\left[\frac 54 n, 2n \right] \times \{x\} \times [0,1]\right),
	\ \ \
	x = 1,...,n,
	$$
	hence a.s. for large $n$
	\begin{equation} \label{maroon}
	\gamma _{2n} (x)  \geq \mathbf{N}([\frac 54 n, 2n] \times \{x\} \times [0,1]), \ \ \ x = 1,...,n.
	\end{equation}
	The random variables $ \omega ^{(n)}_x : =  \mathbf{N}([\frac 54 n, 2n] \times \{x\} \times [0,1])$, $x = 1,...,n$,
	are i.i.d Poisson with mean $\frac 34 n$. By Lemma \ref{lem large dev Poiss}
	for  $x = 0,1,...,n$,
	\begin{equation*}
	\PP{ \omega ^{(n)}_x < \frac n4 } \leq e ^{-\frac n8 },
	\end{equation*}
	hence
	\begin{equation} \label{omega ^n event}
	\PP{ \omega ^{(n)}_x < \frac n4  \text{ for some }  x \in \{ 1,...,n\} } \leq n  e ^{-\frac n8 }.
	\end{equation}
	Since $\sum\limits _{n \in \N}  n  e ^{-\frac n8 } < \infty$,
	by the Borel--Cantelli lemma the event in \eqref{omega ^n event}
	happens a.s. finitely many times only, therefore a.s. for sufficiently 
	large $n$
	\begin{equation*}
	\omega ^{(n)}_x \geq \frac n4 , \ \ \ \    x \in \{ 1,...,n\}.
	\end{equation*}
	By   \eqref{sulky}, \eqref{maroon}, and the definition of  $ \omega ^{(n)}_x$,
	 a.s. for sufficiently large $n$, 
	\begin{equation}\label{lapidation}
	\eta _{2n} (x) \geq \gamma _{2n} (x) \geq \frac n4,  \ \ \ x \in \{ 1,...,n\}.
	\end{equation}
	By
	 taking $n = \left \lfloor \frac t2 \right \rfloor - 1$ in \eqref{lapidation}
	we get a.s. for large $t $
	\begin{equation}
	\eta _{t} (x)  \geq \frac {t}{10},  \ \ \ x \in \{ 1,..., \left \lfloor \frac t2 \right \rfloor - 1 \},
	\end{equation}
	and the statement of the lemma follows.
\end{proof}

Now we are ready to prove the main result of the section.
\begin{thm}\label{thm superlinear growth alpha <2 discrete}
	For every $K_0, K_1 >0$ the set 
	\begin{equation*}
	\Big\{ t:  \sum\limits_{ \substack{ x \in \Z,  \\ x > K_0 + K_1 t}}\eta _t (x) > 0  \Big\}
	\end{equation*}
	is a.s. unbounded.
\end{thm}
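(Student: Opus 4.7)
The plan is to combine Lemma~\ref{slovenly} with a conditional Borel--Cantelli argument applied to dyadic time intervals. By monotonicity in $K_1$ of the set in the statement, we may assume $K_1 > 1/4$ (for $K_1 \leq 1/4$ the conclusion is immediate from Lemma~\ref{slovenly} itself once one takes $K_1' > 1/4$). Set $I_n := (2^n, 2^{n+1}]$, $R_n := \Z\cap(K_0 + K_1\cdot 2^{n+1},\infty)$, and consider the events
\[
C_n := \{\eta_{2^n}(x) \geq 2^n/10\text{ for all } x\in\Z\cap[0,2^n/4]\}\in\mathcal{F}_{2^n}.
\]
By Lemma~\ref{slovenly}, $C_n$ occurs for all but finitely many $n$ a.s. Monotonicity of $(\eta_t)$ in $t$ together with the trivial bound $y - x \leq y$ for $x\geq 0$ yield, on $C_n$ and for every $s \in I_n$, $y\in R_n$, the deterministic lower bound
\[
b^{(d)}(y,\eta_s) \geq b^{(d)}(y,\eta_{2^n}) \geq 1\wedge\sum_{x=0}^{\lfloor 2^n/4\rfloor}\frac{2^n/10}{(y-x)^{2\alpha}} \geq 1\wedge\frac{2^{2n}/40}{y^{2\alpha}} =: \underline{b}_n(y).
\]

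Next, I would introduce the thinned point process
\[
\hat N_n := \int_{I_n\times R_n\times[0,1]} \1_{[0,\underline{b}_n(y)]}(u)\,\mathbf{N}(ds\,dy\,du),
\]
a Poisson random variable of parameter $\Lambda_n := 2^n\sum_{y\in R_n}\underline{b}_n(y)$ that is independent of $\mathcal{F}_{2^n}$ by the independent-increments property of $\mathbf{N}$. Writing $N_n$ for the total number of births at sites in $R_n$ during $I_n$ according to \eqref{se}, one has $N_n\geq\hat N_n$ pathwise on $C_n$ (since $\1_{[0,\underline{b}_n(y)]}(u)\leq\1_{[0,b^{(d)}(y,\eta_{s-})]}(u)$ on $C_n$). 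A direct estimate of $\Lambda_n$, splitting the sum at $y^\ast:=(2^{2n}/40)^{1/(2\alpha)}$ to handle the truncation at $1$, shows that $\Lambda_n \to \infty$ for $\alpha\in(1/2,2)$ and $\Lambda_n \to c(K_1) > 0$ for $\alpha=2$, so $\Lambda_n \geq c_\alpha > 0$ uniformly in $n$.

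Let $E_n := \{N_n\geq 1\}$. Since $C_n\in\mathcal{F}_{2^n}$ and $\hat N_n\perp\mathcal{F}_{2^n}$,
\[
\P(E_n\mid\mathcal{F}_{2^n}) \geq \P(\hat N_n\geq 1\mid\mathcal{F}_{2^n})\,\1_{C_n} = (1-e^{-\Lambda_n})\,\1_{C_n} \geq (1-e^{-c_\alpha})\,\1_{C_n} \quad \text{a.s.}
\]
As $\1_{C_n} = 1$ for all but finitely many $n$ a.s., $\sum_n \P(E_n\mid\mathcal{F}_{2^n}) = \infty$ a.s., and L\'evy's extension of the Borel--Cantelli lemma yields $E_n$ infinitely often. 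On each such $E_n$ a birth occurs at some time $s\in I_n$ at a site $y\in R_n$, so $y > K_0 + K_1\cdot 2^{n+1}\geq K_0 + K_1 s$; hence $s$ belongs to the set in the statement, and that set is unbounded.

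The main obstacle is the borderline case $\alpha = 2$: there $\Lambda_n$ neither diverges nor vanishes, and the whole argument rests on the \emph{uniform} positive lower bound $\Lambda_n \geq c_\alpha$, which comes only from the polynomial tail of the kernel. The coupling step is also delicate: the lower bound $\underline{b}_n(y)$ must be deterministic in order that $\hat N_n$ be independent of $\mathcal{F}_{2^n}$ and hence usable in the conditional Borel--Cantelli argument with the $\mathcal{F}_{2^n}$-measurable event $C_n$.
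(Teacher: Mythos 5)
Your proof is correct and structurally parallels the paper's: both hinge on Lemma~\ref{slovenly} to obtain a deterministic ``rectangle'' $\zeta_s$ dominated by $\eta_s$, both lower-bound the birth rate beyond a linearly moving frontier by thinning against this deterministic configuration, and both close with a second Borel--Cantelli argument. The genuine difference is the time discretization. The paper works over unit intervals $(n,n+1]$, which gives an $F_n$ that is Poisson with mean $m_n\gtrsim 1\wedge c\,n^{-(2\alpha-3)}$; the condition $\alpha\leq 2$ then enters precisely through the divergence of $\sum n^{-(2\alpha-3)}$, and the $F_n$ are fully independent because they are built from disjoint time slabs of $\mathbf N$ against the \emph{deterministic} $\zeta_n$, so the classical second Borel--Cantelli suffices. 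You aggregate over dyadic intervals $(2^n,2^{n+1}]$, which absorbs the $n$-dependence and yields a uniform lower bound $\Lambda_n\geq c_\alpha>0$; this makes the divergence trivial and handles the boundary case $\alpha=2$ cleanly, at the cost of invoking L\'evy's conditional Borel--Cantelli to pass through the $\mathcal F_{2^n}$-measurable good events $C_n$. Note that you could dispense with the conditional form entirely: since $\underline{b}_n(y)$ is deterministic, your $\hat N_n$ are mutually independent across $n$ (disjoint time slabs), so ordinary second Borel--Cantelli gives $\hat N_n\geq1$ i.o.\ a.s., and intersecting with the a.s.\ event that $C_n$ holds eventually gives $N_n\geq\hat N_n\geq1$ i.o.\ --- exactly the paper's finish. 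Either way the argument is sound.
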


\begin{proof}
	Without loss of generality we assume that $K_1 >1$.	
		Let $ \zeta _t  $ be defined by 
	$\zeta _t (k) = \left \lfloor \frac{t}{10} \right \rfloor \1 \{ 0\leq k \leq \frac t4 \}  $,
	hence  by Lemma \ref{slovenly}
	a.s. for large $t$,
	\begin{equation} \label{zeta leq eta}
	\zeta _t \leq \eta _t.
	\end{equation}
	
	Provided that $t$ is sufficiently large,
	the rate of a birth occuring inside  $[ K_0 + K_1 t + K_1, \infty )$
	at time $t$ is 
	\begin{equation}\label{quail1}
	\begin{gathered}
	\sum\limits _{\substack{ x \in \Z: \\
			x > K_0 + K_1 t + K_1}}  b^{(d)}(x, \eta _t) \geq  
	\sum\limits _{\substack{ x \in \Z: \\
			x > K_0 + K_1 t + K_1}} b^{(d)}(x, \zeta _t) 
	\geq 1 \wedge     \sum\limits _{\substack{ x \in \Z: \\
			x > K_0 + K_1 t + K_1}}
	\sum\limits _{k = 0   } ^{\left \lfloor \frac t4 \right \rfloor} 
	\zeta(k) a^{(d)}(x-k) .
		\end{gathered}
	\end{equation} 

Note that
	\begin{equation}\label{quail2}
\begin{gathered}
\sum\limits _{k = 0   } ^{\left \lfloor \frac t4 \right \rfloor} 
a^{(d)}(x-k)
=
\sum\limits _{k = 0   } ^{\left \lfloor \frac t4 \right \rfloor} 
\frac{1}{ |x-k|^{2\alpha}  } 
\geq  \sum\limits _{k = 0   } ^{\left \lfloor \frac t4 \right \rfloor} 
\frac{1}{ |x|^{2\alpha}  }
=
\left \lfloor \frac t4 \right \rfloor
\frac{1}{ |x|^{2\alpha}  }
\end{gathered}
\end{equation} 

hence for large $t$

	\begin{equation}\label{quail3}
\begin{gathered}
\sum\limits _{\substack{ x \in \Z: \\
		x > K_0 + K_1 t + K_1}}
\sum\limits _{k = 0   } ^{\left \lfloor \frac t4 \right \rfloor} 
 a^{(d)}(x-k)
\geq 
\left \lfloor \frac t4 \right \rfloor
\sum\limits _{\substack{ x \in \Z: \\
		x > K_0 + K_1 t + K_1}}
	\frac{1}{ |x|^{2\alpha}  }
	\\
	\geq 
   \frac{t}{5}  \times 
	\frac{1}{ 2(2\alpha - 1)|K_0 + K_1 t + K_1 + 1|^{2\alpha - 1}  }
	 \geq \frac{C}{t ^{2 \alpha -2}},
	\end{gathered}
	\end{equation} 
	 Since $\zeta (k ) = \left \lfloor \frac{t}{10} \right \rfloor$, $k \in 0,1,...,\left \lfloor \frac t4 \right \rfloor$,
	 by \eqref{quail1} and \eqref{quail3},
	 
	 		\begin{equation}\label{quail}
	 \begin{gathered}
	 	\sum\limits _{\substack{ x \in \Z: \\
	 		x > K_0 + K_1 t + K_1}}  b^{(d)}(x, \eta _t) \geq  
	 1 \wedge \left(  \left \lfloor \frac{t}{10} \right \rfloor   \frac{C}{t ^{2 \alpha -2}}\right) 
	 \geq 1 \wedge \frac{c}{t ^{2 \alpha -3}},
	 \end{gathered}
	 \end{equation} 
	 	where $c >0$ is a constant depending on $K_0, K_1, \alpha $,
	 but not on time $t$.

	Let $ \mathrm{L}_t $ be the number of jumps for $(\eta _t)$ that have occured prior $t$
	to the right of a growing interval $ [0, K_0 + K_1 s]$ for some $s \leq t$, that is, 
	\begin{align}
	\mathrm{L}_t  = &
	\#  \{(s,k): s \in  (0,t] , k \in ( K_0 + K_1 s, \infty )\cap \Z, \eta _s(k) - \eta _{s-}(k) = 1 \}
	\notag
	\\
	\label{mathrm K_n def}
	= &
	\int\limits _{ \substack{ \{(s,k,u): s \in  (0,t] , \\ k \in ( K_0 + K_1 s, \infty )\cap \Z, u \in [0,1] \} }   }
	\1_{ [0,b^{(d)}(k, \eta _{s-} )] } (u)
	\mathbf{N}(ds dk du).
	\end{align}

	Let $n \in \N$. We have 
	\begin{align}
	\mathrm{L}_{n+1} - \mathrm{L}_{n}  
	= &
	\int\limits _{ \substack{ \{(s,k,u): s \in  (n,n+1] , \\ k \in ( K_0 + K_1 s, \infty )\cap \Z, u \in [0,1] \} }   }
	\1_{ [0,b^{(d)}(k, \eta _{s-} )] } (u)
	\mathbf{N}(ds dk du). \notag
	\\
	\label{icky}
	\geq & \int\limits _{ \substack{ \{(s,k,u): s \in  (n,n+1] , \\ k \in ( K_0 + K_1 n + K_1, \infty )\cap \Z, u \in [0,1] \} }   }
	\1_{ [0,b^{(d)}(k, \eta _{n} )] } (u)
	\mathbf{N}(ds dk du) .
	\end{align}
	
	Define the sequence of independent random variables $\{F_n\}_{n \in \N}$,
	\begin{equation}\label{icky2}
	\begin{gathered}
	F_{n} :=
	 \int\limits _{ \substack{ \{(s,k,u): s \in  (n,n+1] , \\ k \in ( K_0 + K_1 n + K_1, \infty )\cap \Z, u \in [0,1] \} }   }
	\1_{ [0,b^{(d)}(k, \zeta _{n} )] } (u)
	\mathbf{N}(ds dk du) .
	\end{gathered}
	\end{equation}
	By \eqref{zeta leq eta} and \eqref{icky}, a.s. for large $n$
	\begin{equation}\label{icky3}
	\mathrm{L}_{n+1} - \mathrm{L}_{n}  \geq F_n.
	\end{equation}
	Since $\zeta _n$ is a non-random element of $\Z _+ ^ \Z$,
	$F_n$ is a Poisson random variable with mean
	$$m_n:=\sum\limits _{\substack{ x \in \Z: \\
			x > K_0 + K_1 n + K_1}}  b^{(d)}(x, \zeta _n).$$
	As we saw in \eqref{quail}, $m_n \geq 1 \wedge c n^{-(2 \alpha - 3)} $
	large $ n$.
	Hence, at least for large $n$,
	\begin{equation}\label{high jinks}
	\begin{gathered}
	\PP{F_n \geq 1} 
	=
	{ 1 - e^{-m_n} } \geq    1 - \exp\left\{ - 1 \wedge c n^{-(2 \alpha - 3)}  \right\}.
	\end{gathered}
	\end{equation}
	Recall that  for $c_1, c_2 \in \R$, $- c_1 \wedge c_2 = - (c_1 \wedge c_2)$.
	The series 
	\begin{equation} \label{series which diverges for alpha < 2}
		\sum\limits _{n\in \N}  \left( 1 - \exp\left\{ - 1 \wedge c n^{-(2 \alpha - 3)}   \right\}\right)
	\end{equation}
	 diverges  since $2 \alpha -3 \leq 1 $.
	Hence by  the Borel--Cantelli lemma 
	and \eqref{high jinks}, 
	\begin{equation}\label{enclave}
	\begin{gathered}
	\PP{F_n \geq 1 \text{ for infinitely many } n \in \N } 
	=
	1.
	\end{gathered}
	\end{equation}

	Finally, by \eqref{icky3} and \eqref{enclave}, 
	\begin{equation}
	\PP{\mathrm{L}_{n+1} - \mathrm{L}_{n} \geq 1 \text{ for infinitely many } n\in \N } = 1. 
	\end{equation}
	Recalling the definition of $\mathrm{L}_{n}$ in \eqref{mathrm K_n def},
	we see that our theorem is proven.
	
\end{proof}

\section{Continuous-space model}\label{section micro cont}
 
 We now return to the continuous-space model
 with the birth rate \eqref{motile c}
 described in the introduction. To prove Theorem \ref{finite speed of propagation} 
 and Theorem \ref{thm superlinear growth alpha < 2},
 we couple the continuous-space process with the discrete-space process from Sections \ref{section micro discr}
 and \ref{section micro alpha less than 2}
 and make use of Theorem \ref{at most linear} and Theorem \ref{thm superlinear growth alpha <2 discrete}.

 The continuous-space birth process defined by \eqref{motile c} and \eqref{stampede} can be obtained as a unique solution 
 to the stochastic equation 
 \begin{equation} \label{se continuous}
\begin{aligned}
|\eta _t \cap B| = \int\limits _{(0,t] \times B \times [0, \infty )   }
& \1 _{ [0,b^{(c)}(x, \eta _{s-} )] } (u) 
N^{(c)}(ds,dx,du)
+ |\eta _0 \cap B|,  \quad
t \geq 0, \  B\in \mathscr{B}(\R ^1),
\end{aligned}
\end{equation}
 where
$(\eta _t)_{t \geq 0}$ is a cadlag $\Go$-valued solution
 process,
 $N^{(c)}$ is a  Poisson point process on 
$\R_+  \times \R^1 \times \R_+  $,
the mean measure of $N^{(c)}$ is $ds \times dx \times du$,
and $\eta _0  = \{ 0\}$.
Equation \eqref{se continuous} is understood in the sense that the equality
holds a.s. for every bounded
$B \in \mathscr{B} (\R ^1) $ and $t \geq 0$.
In the integral on the right-hand side of \eqref{se continuous},
$x$ is the location and 
$s$ 
is the time of birth of a new particle. Thus, 
the integral over $B$ from $0$ to $t$ represents the number 
of births inside $B$ which occurred before $t$
(see \cite{shapenodeath} for more details).
The birth rate $b^{(c)}$ is as in \eqref{motile c}
with $a$ defined in \eqref{stampede}.

In this section we denote the solution 
to \eqref{se continuous} by $(\eta _t ^{(c)})$ with
the upper index `$(c)$' standing 
for `continuous'. We compare 
$(\eta _t ^{(c)})$ to the solution $(\eta _t ^{(d)})$
($(d)$ for `discrete')
of another equation 
 \begin{equation} \label{se discrete}
 \eta _t (k) = 
 \int\limits _{(0,t] \times \{k\} \times [0, 1 ]}
 \1_{ [0, C_\alpha  b ^{(d)}(i, \eta _{s-} )] } (u)
  {N}^{(d)}(ds di du)
 + \eta ^{(d)} _0 (k),
\end{equation}
which is of the form \eqref{se}
but with the birth rate multiplied by $C_\alpha > 0$:
\begin{equation}\label{motile d}
C_\alpha b^{(d)}(x,\eta) = C_\alpha \wedge \left( C_\alpha \sum\limits _{y \in \Z  } 
\eta(y) a ^{(d)}  (x-y) \right), \quad x \in \Z, \quad \eta \in \Z _+ ^{\Z},
\end{equation}
 with $a^{(d)}$ as in \eqref{toad}
and $\eta ^{(d)} _0 (k) = \1 \{ k = 0 \}$, 
and with 
 the driving Poisson point  process
\begin{equation*}
{N}^{(d)}([0,t] \times \{ k \} \times [0,u])
=
N ^{(c)}([0,t] \times (k - \frac 12, k + \frac 12]
\times [0,u]).
\end{equation*}

Note that $(\eta _t ^{(d)})$ is the process from the previous section evolving $C_\alpha$ times
 faster in time (or slower if $C_\alpha < 1$), and Theorem \ref{at most linear} applies to $(\eta _t ^{(d)})$ too.

Define also the discretization of the continuous-space process 
$(\eta _t ^{(c)}) 
$ as the process $(\eta _t ^{(dc)})$ 
taking values in $Z _+ ^{\Z}$
and 
\begin{equation}
 \eta _t ^{(dc)} (k) = 
 |\eta _t ^{(c)} \cap (k - \frac 12, k + \frac 12]|,  \ \ \ k \in \Z.
\end{equation}
Recall that for $c_1, c_2, c_3 \in \R$, $c_1c_2 \vee c_3 = (c_1c_2)\vee c_3$,
and the same for $\wedge$.
\begin{prop}\label{spastic}
	\begin{itemize}
		\item[$(i)$] Let $C_\alpha \geq c_\alpha 2 ^\alpha \vee 2$.
Then a.s. for all $t \geq 0$
\begin{equation}\label{iffy}
  \eta _t ^{(dc)} (k) \leq \eta _t ^{(d)}(k),
   \ \  \ k \in \Z.
\end{equation}
	 \item[$(ii)$] Let $ C_\alpha \leq c_\alpha 4^{-\alpha} \wedge \frac 12$. Then a.s. for all $t \geq 0$
	 \begin{equation}\label{iffy2}
	 \eta _t ^{(dc)} (k) \geq \eta _t ^{(d)}(k),
	 \ \  \ k \in \Z.
	 \end{equation}
	
	\end{itemize}
\end{prop}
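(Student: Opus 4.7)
The plan is to couple $(\eta_t^{(c)})$ and $(\eta_t^{(d)})$ through the shared driver $N^{(c)}$ and reduce both claims to a cell-by-cell comparison of the kernels $a$ and $a^{(d)}$. The key analytic estimate is that, for all $k,j \in \Z$, $x \in (k-\tfrac12, k+\tfrac12]$, and $y \in (j-\tfrac12, j+\tfrac12]$, the ratio $a(x-y)/a^{(d)}(k-j)$ lies between two explicit constants matching the hypothesized thresholds on $C_\alpha$. Using $|k-j|-1 \leq |x-y| \leq |k-j|+1$ for $|k-j|\geq 1$ (and $|x-y|<1$ for $k=j$), this reduces to elementary polynomial inequalities in $|k-j|$; for instance, the upper bound driving (i) follows from $1+(|k-j|-1)^2 \geq \tfrac12 (k-j)^2$, which is just $(|k-j|-2)^2 \geq 0$, and the matching lower bound used in (ii) is analogous.

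Next I would transfer the kernel estimate to a rate inequality. Splitting $\sum_{y \in \eta^{(c)}} a(x-y) = \sum_j \sum_{y \in \eta^{(c)} \cap (j-\frac12, j+\frac12]} a(x-y)$, applying the pointwise kernel bound to each inner sum, and then invoking the running inequality between $\eta^{(dc)}(j)$ and $\eta^{(d)}(j)$, I would obtain, for every $x \in (k-\tfrac12, k+\tfrac12]$: in case (i), $\sum_{y \in \eta^{(c)}} a(x-y) \leq C_\alpha \sum_j \eta^{(d)}(j)\, a^{(d)}(k-j)$, and combined with $C_\alpha \geq 2 \geq 1$ to handle the $\wedge$-truncations, this gives $b^{(c)}(x,\eta^{(c)}) \leq C_\alpha b^{(d)}(k,\eta^{(d)})$; in case (ii), the reverse kernel-level inequality holds, and $C_\alpha \leq \tfrac12 \leq 1$ guarantees $C_\alpha b^{(d)}(k,\eta^{(d)}) \leq 1$, so that the $\wedge 1$ on the continuous side does not destroy the comparison, yielding $b^{(c)}(x,\eta^{(c)}) \geq C_\alpha b^{(d)}(k,\eta^{(d)})$.

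A standard coupling argument then closes the proof. Let $\tau$ be the first time at which \eqref{iffy} (resp.\ \eqref{iffy2}) fails at some $k$; since both processes are pure jump with only finitely many jumps in any bounded space-time window, $\tau$ is a jump time on $\{\tau<\infty\}$. Every atom $(\tau, x, u)$ of $N^{(c)}$ with $x \in (k-\tfrac12, k+\tfrac12]$ is, by construction, coupled to the atom $(\tau, k, u)$ of $N^{(d)}$. In case (i), a failure at $\tau$ would require a jump of $\eta^{(dc)}_\tau(k)$ unmatched by a jump of $\eta^{(d)}_\tau(k)$; the jump of $\eta^{(dc)}_\tau(k)$ forces $u \leq b^{(c)}(x,\eta^{(c)}_{\tau-})$, which by the rate inequality forces $u \leq C_\alpha b^{(d)}(k,\eta^{(d)}_{\tau-})$, so $\eta^{(d)}_\tau(k)$ jumps simultaneously, a contradiction. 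Case (ii) is symmetric, with the roles of $b^{(c)}$ and $C_\alpha b^{(d)}$ swapped. I expect the main obstacle to be the pointwise kernel estimate in the small-$|k-j|$ regime, where the large-separation tail comparison does not apply and a short, elementary case-by-case verification is required; once this is in hand, the rest is bookkeeping through the coupling.
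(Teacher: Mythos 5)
Your overall approach---coupling $(\eta^{(c)}_t)$ with $(\eta^{(d)}_t)$ through the shared driver $N^{(c)}$, reducing to a cell-by-cell comparison of the kernels, and closing via a first-failure-time argument---is essentially the paper's proof, which instead runs an induction over the discrete sequence of birth moments; the two are equivalent since both processes are pure-jump with locally finitely many events. Your cell-by-cell formulation, comparing $a(x-y)$ directly against $a^{(d)}(k-j)$ for $x\in(k-\tfrac12,k+\tfrac12]$ and $y\in(j-\tfrac12,j+\tfrac12]$, is in fact cleaner than the paper's stated inequality $a(z)\lessgtr c_\alpha(\cdot)\,a^{(d)}(\text{round}(z))$, because applying the latter with $z=x-y$ produces $a^{(d)}(\text{round}(x-y))$ rather than the $a^{(d)}(k-j)$ actually appearing in $b^{(d)}(k,\cdot)$, and the two arguments can differ by $\pm1$. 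Your explicit verification for part~(i), namely $1+(|k-j|-1)^2\geq\tfrac12(k-j)^2\Leftrightarrow(|k-j|-2)^2\geq0$, is correct, as is the handling of the $\wedge$-truncations via $C_\alpha\geq2$ and $C_\alpha\leq\tfrac12$.

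However, the ``analogous'' lower bound you invoke for part~(ii) fails, precisely in the small-$|k-j|$ regime you flag as needing a case-by-case check. With the constant from the statement, the required inequality is $1+|x-y|^2\leq 4\,(1\vee(k-j)^2)$ for $x,y$ in the respective cells. At $|k-j|=1$ one has $|x-y|\in(0,2)$, so $1+|x-y|^2$ ranges up to (though not attaining) $5>4$; the inequality already fails for $|x-y|\in(\sqrt3,2)$---equivalently, your implicit check $3n^2-2n-2\geq0$ is false at $n=1$. This is actually a defect in the proposition as printed: the threshold $c_\alpha 4^{-\alpha}$ in part~(ii) is too generous. The repair is harmless---replace it by $c_\alpha 5^{-\alpha}$ (or anything smaller), for which $1+|x-y|^2<5(1\vee(k-j)^2)$ does hold in every case ($n=0$: $<2$; $n=1$: $<5$; $n\geq2$: $<n^2+2n+2<5n^2$). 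The exact numerical value of $C_\alpha$ is immaterial downstream, since $(\eta^{(d)}_t)$ is just the lattice process of Sections~3--4 run at a different constant speed, so Theorems~\ref{at most linear} and~\ref{thm superlinear growth alpha <2 discrete} apply unchanged.
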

\begin{proof}
	We start with
	$(i)$.
The proof will be done by induction on the 
birth moments of $(\eta _t ^{(c)})$.
Let $\{\theta _k \}$ be the moment of $k$-th birth
for $(\eta _t ^{(c)})$, $\theta _0  = 0$.
For $t = \theta _0$, \eqref{iffy} is satisfied.
For $x \in \R$, 
let 
here $\text{round}(x)$ is the closest integer to $x$,
with convention  that $\text{round}
(m + \frac 12) = m$, $m  \in \Z$.
It is sufficient to show that
that if a birth occurs for $(\eta _t ^{(c)})$
at time $\theta$ at $x \in \R$,
then a birth also occurs  for $(\eta _t ^{(d)})$
at $\theta$ at $\text{round}(x)$.
Assume \eqref{iffy} holds for $k < n \in \N$
and let $x_n$ be the place of birth 
at time $\theta _n$. Since $(\eta _t ^{(c)})$
solves
\eqref{se continuous}, we have a.s.
\[
 N ^{(c)} (\{\theta _k \} \times \{x_n \} \times 
  [0, b^{(c)} (x_n, \eta ^{(c)}_{\theta _k -}) )) = 1.
\]
Since $\frac{1 \vee | \text{round}(x)|^{2\alpha}}{(1 + |x|^2)^ \alpha} \leq 2 ^ \alpha$
	for $x \in \R$, we have 
	\begin{equation}\label{a leq a^d}
	a(x) \leq c_\alpha 2 ^\alpha a^{(d)}(\text{round}(x)), \ \ \ x \in \R,
	\end{equation}
and hence by the induction assumption a.s.
\[
b^{(c)} (x_n, \eta ^{(c)}_{\theta _k -}) 
\leq C_\alpha b^{(d)} 
(\text{round}(x_n), \eta ^{(dc)}_{\theta _k -}).
\]
Consequently, we also have a.s.
\[
 {N}^{(d)}(\{\theta _k \} \times \{
 \text{round}(x_n) \} \times 
  [0, C_\alpha b^{(d)} (\text{round}(x_n), 
  \eta ^{(d)}_{\theta _k }) )) = 1,
\]
and so we also have a birth 
for $( \eta ^{(d)} _t)$
at time $\theta _k$ at $\text{round}(x_n)$
since $  \eta ^{(dc)}_{\theta _k -}
 \leq  \eta ^{(d)}_{\theta _k- } $ and thus 
\eqref{iffy} holds at $\theta _n$ as well.

	 The proof of  $(ii)$ can be done by induction
	 on the birth moments of  $(\eta _t ^{(d)})$,
	 following exactly the same steps as the proof of $(i)$,
	 so we omit it.	 
	 We just point out that the counterpart of \eqref{a leq a^d}: 
	 \begin{equation*}
	 a(x) \geq c_\alpha 4^{-\alpha}  a^{(d)}(\text{round}(x)), \ \ \ x \in \R.
	 \end{equation*}

\end{proof}

\begin{proof}[Proof of Theorem \ref{finite speed of propagation}]
	The statement of the theorem follows 
	from Theorem \ref{at most linear} and Proposition \ref{spastic}, $(i)$.
\end{proof}
\begin{proof}[Proof of Theorem \ref{thm superlinear growth alpha < 2}]
	The statement of the theorem is a consequence of Theorem \ref{thm superlinear growth alpha <2 discrete}
	and Proposition \ref{spastic}, $(ii)$.
\end{proof}



\newcommand{\X}{{\mathbb{R}^1}}
\renewcommand{\S}{{S^{d-1}}}

\newcommand{\x}{\mathcal{X}}
\newcommand{\tx}{{\tilde{\mathcal{X}}}}
\newcommand{\D}{\mathscr{D}}
\newcommand{\Mn}{{\mathcal{M}}}
\newcommand{\K}{\mathscr{K}}
\renewcommand{\L}{\mathfrak{L}}
\newcommand{\Tauout}{\mathscr{O}}
\newcommand{\Tauin}{\mathscr{C}}
\renewcommand{\m}{{\mathfrak{m}}}
\newcommand{\A}{{\mathfrak{a}}}
\newcommand{\h}{{\mathfrak{h}}}
\newcommand{\T}{\mathfrak{t}}
\newcommand{\An}{\mathcal{A}}

\newcommand{\df}{\coloneqq}
\renewcommand{\Re}{\mathrm{Re}\,}
\renewcommand{\Im}{\mathrm{Im}\,}
\newcommand{\dist}{\mathrm{dist}\,}
\newcommand{\Lip}{\mathrm{Lip}}
\newcommand{\supp}{\mathrm{supp}\,}
\newcommand{\inter}{{\mathrm{int}}}
\newcommand{\ra}{\rightarrow}
\renewcommand{\varpi}{\omega}

\renewcommand{\EE}{L^\infty(\R)}

\let\copyint\int

\RenewDocumentCommand \int {o o}
{ \IfNoValueTF {#2} { \IfNoValueTF {#1} { \copyint } { \copyint\limits_{#1} } }	{ \copyint\limits_{#1}^{#2} } }

\NewDocumentCommand \diff {m m}
{ \frac{\partial #1}{\partial #2} }

\section{Mesoscopic equation}\label{section meso}
In this section we study the long time behavior of non-negative bounded solutions to the following nonlinear nonlocal evolution equation 
\begin{equation}\label{eq:basic_meso}
\begin{cases}
\begin{aligned}
\dfrac{\partial u}{\partial t}(x,t) =& \ \min\{ (a*u)(x,t), 1 \}, &\qquad  &x\in\R,\ t\in (0,\infty),\\
u(x,0) =& u_0(x), &\qquad 	&x\in\R.
\end{aligned}
\end{cases}
\end{equation}
Here $u \in C(\R_+,L^\infty(\R))\cap C^1((0,\infty), L^\infty(\R))$ is a classical solution to \eqref{eq:basic_meso}, $u_0\in L^\infty(\R,\R_+)$ is an initial condition;
the function $a\in L^1(\R):=L^1(\R,dx)$ is a probability density, i.e. $a(x)\geq0$ for a.a.\! (almost all) $x\in\R$ and
\begin{equation}\label{eq:normed}
\int_{\mathbb{R}} a(x)\,dx=1;
\end{equation}
the symbol $*$ stands for the convolution in $x$ on ${\mathbb{R}}$, i.e.
\[
(a*u)(x,t):=\int_{\mathbb{R}} a(x-y)u(y,t)\,dx.
\]

\textbf{\emph{An informal scaling and link between the microscopic
	and mesoscopic models.}}
\begin{em}
Here we describe the heuristic arguments which connect the birth process defined by \eqref{motile c} and \eqref{stampede} and the solution to the equation \eqref{eq:basic_meso}. 
We follow here the line of thought from \cite[Theorem 5.3]{fournier2004microscopic}.
Let us stress that we do not in any way give a rigorous proof of the link.

For a bounded measurable function $\phi:\Gamma_0\to\R$ consider the birth rate
\begin{equation}\label{pernicious}
	 b^n(x,\eta) = n\wedge\Big( \sum\limits_{y\in\eta} a(x-y)\Big),
\end{equation}
and the corresponding spatial birth process $(\eta^n_t)_{t\geq0}$.

 For $t \geq 0$,
 let  $\nu^n_t$
 be a random purely atomic measure on  $\R $ defined by
 \[
  \nu _t ^n (A) = |\eta ^n _t \cap A|.
 \]

The intuition is that considering $(\eta^n_t)_{t\geq0}$ and $(\nu^n_t)_{t\geq0}$ we increase the birth rate 
but then we are going to  rescale the process by multiplying by $\frac 1n $ to compensate for the increase in the number of particles.
Let $\mathscr{M}(\R )$ be the space of finite non-negative measures
equipped with the vague topology.
Assume that  if $\frac 1n \nu_0^n(dx)$ converges in law to a deteministic measure
 $\mu_0(dx)$, 
 then the measure valued function $\frac 1n \nu_t^n(dx)$ converges in law in the Skorokhod space
 $\mathrm{D}([0,T], \mathscr{M}(\R ))$
   to a deterministic $\mathscr{M}(\R )$-valued function 
 $t \mapsto \mu _t $. Since  
  \eqref{terse} below
  is  a martingale with a vanishing quadratic variation,
  this limiting measure-valued function
 should then be  
  a unique solution to the integral equation
 written in the weak form:
 \begin{equation}\label{egregious}
 \langle  \mu _t, f \rangle =  \langle  \mu _0, f \rangle 
+ 
\int\limits _0 ^t ds \int\limits_{x \in \R} f(x) 
\min \{1,  \int\limits_{y \in \R} a(x - y) \mu _t(dy)  \} dx.
 \end{equation}

Assume furthermore that $\mu _t$ has a density with respect to the Lebesgue measure
provided that the initial condition does: $\mu_0(x) = u_0(x) dx$.
We denote the density of $\mu _t$ by $u(t,x)$, so that $\mu _t(dx) = u(t,x)dx$.
Denote $u_t = u(t,\cdot)$, ($u_t$ is a function on $\R$).
Then we have 
\[
\frac 1n \sum\limits _{y \in \eta ^n _t } 
a (x-y) \to (a \ast u_t)(x)
\]
and hence, assuming that $u$ is differentiable,
\[
\frac{\partial u(t,x)}{\partial t} (t,x)
= \lim\limits _n \frac 1n b ^{(n)} (x,\eta ^n _t)
= \lim\limits _n \frac 1n \left[ 
n \wedge \left( \sum\limits _{y \in \eta ^n _t  } 
a (x-y) \right)
\right] 
=
\lim\limits _n  
1 \wedge \left( \frac 1n \sum\limits _{y \in \eta ^n _t  } 
a (x-y) \right) 
\]
\[
= 1 \wedge \big( (a \ast u_t)(x) \big),
\]
which coincides with \eqref{eq:basic_meso}.

The proof that the limiting measure
is indeed the unique solution to 
\eqref{egregious}
 would have to rely on the martingale properties of the spatial birth processes.
The generator of the birth process with the rate \eqref{pernicious} 
is 
\begin{equation*}
(L^n\phi)(\eta) = \int_\R b^n(x,\eta) [\phi(\eta\cup x)-\phi(\eta)] dx.
\end{equation*}
As in  \cite{fournier2004microscopic},
  one could show that for any bounded measurable $f:\R\to\R$ 
\begin{equation} \label{terse}
M_t^{n,f} := \frac 1n \int_\R f(x) \nu_t^n(dx) - \frac 1n \int_\R f(x) \nu_0(dx) - \frac 1n \int_0^t \int_\R \Big[n\wedge \int[\R] a(x-y)\nu_t^n(dy)\Big]f(x)dxds
\end{equation}
is a c\`{a}dl\`{a}g martingale with the quadratic variation
\[
\langle M^{n,f} \rangle_t = \frac {1}{n^2} \int_0^t \int_\R \Big[n\wedge \int[\R] a(x-y)\nu_t^n(dy)\Big] f^2(x)dxds.
\]
Hence
\[
\E |M_t^{n,f}|^2 = \E \langle M^{n,f} \rangle_t \leq \frac{ c_\alpha \|f \| \E |\eta _t ^n|}{n} 
\]
where $\|f\| = \sup\limits _{x \in \R} f(x)$. Thus $\E |M_t^{n,f}|^2 \to 0$ a.s. uniformly on any finite interval 
$[0,T]$, $n \to \infty$.  
\\
\end{em}

	The proof of Theorem \ref{thm:est_front} falls naturally into two parts.
First, we obtain an estimate of the solution $u$ from above (see Proposition \ref{prop:ba:bound_above_gen}), which implies that $u$ propagates at most exponentially.
Second, we construct subsolutions \eqref{eq:def_g_and_h} to \eqref{eq:basic_meso} in order to estimate `small' level-sets of the solution from below.
Then, locally uniform convergence of $u$ to infinity (Lemma \ref{lem:hair_trigger_effect}) demonstrates that the solution does not propagate slower than exponentially.

We start with general properties of the solutions to \eqref{eq:basic_meso}.
\begin{defn}
We call an operator $G$ in $L^\infty(\R)$ monotone, if for all $h_1, h_2 \in \EE$,
\begin{align*}
	h_1(x)\leq h_2(x),\ x\in\RR, \qquad &\Rightarrow \qquad Gh_1(x) \leq Gh_2(x),\ x\in\RR.
\end{align*}
We call an operator $G$ in $L^\infty(\R)$ Lipschitz continuous, if there exists $K>0$, such that for all $h_1, h_2 \in \EE$,
\begin{align*}
	\quad \|Gh_2 - Gh_1\|_{\EE} &\leq K \|h_2 - h_1\|_{\EE}.
\end{align*}
\end{defn}
\begin{rem}\label{rem:G_example}
	$Gu = \min\{a*u,1\}$ is a monotone and Lipschitz continous operator in $\EE$ with the Lipschitz constant $K=1$.
\end{rem}

 Since $G$ is Lipschitz-continuous in the Banach space $L^\infty(\R)$, well-posedness of \eqref{eq:basic_meso} 
is easily shown by a Picard iteration scheme (see e.g.~\cite[Chapter 6, Theorem 1.2, Theorem 1.7]{Pazy}).
For completeness we provide the details (cf.~\cite{some_pde}).
\begin{prop}\label{prop:exist_uniq}
	Let $G$ be Lipschitz continuous on $L^\infty(\R)$ and $u_0\in L^\infty(\R)$.
	Then for any $T>0$ there exists a unique  classical solution $u\in C(\R_+, L^\infty(\R))\cap C^1((0,\infty), L^\infty(\R))$  to the equation,
	\begin{equation}\label{eq:dudt_Gu}
		\left\{
			\begin{aligned}
				\frac{\partial u}{\partial t}(x,t) &= (Gu)(x,t), \qquad &t\in(0,\infty),\ x\in\X,\\
			u(x,0) &= u_0(x), \qquad &x\in\X.
			\end{aligned}
		\right.
	\end{equation}
\end{prop}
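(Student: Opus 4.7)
The plan is to recast the Cauchy problem \eqref{eq:dudt_Gu} as the integral equation
\[
u(t) = u_0 + \int_{0}^{t} (Gu)(s)\,ds, \qquad t\in[0,T],
\]
in the Banach space $X_T := C([0,T], L^\infty(\R))$ equipped with the sup-in-time norm $\|v\|_{X_T} := \sup_{t\in[0,T]}\|v(t)\|_{L^\infty(\R)}$, and then to apply the Banach fixed point theorem to the operator $\Phi\colon X_T \to X_T$ defined by $(\Phi v)(t) := u_0 + \int_0^t (Gv)(s)\,ds$. First I would verify that $\Phi$ maps $X_T$ into itself: since $G$ is Lipschitz continuous on $L^\infty(\R)$, the composition $s\mapsto (Gv)(s)$ is continuous from $[0,T]$ into $L^\infty(\R)$ whenever $v\in X_T$, and the Bochner integral of a continuous Banach-space-valued function yields a continuous function of $t$.

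Next, for $v_1,v_2\in X_T$ and any $t\in[0,T]$ the Lipschitz property of $G$ gives
\[
\|(\Phi v_1)(t) - (\Phi v_2)(t)\|_{L^\infty(\R)} \leq \int_0^t K\|v_1(s) - v_2(s)\|_{L^\infty(\R)}\,ds \leq KT\|v_1-v_2\|_{X_T},
\]
so $\Phi$ is a strict contraction as soon as $T < 1/K$. The contraction mapping theorem then yields a unique fixed point $u\in X_{T_0}$ for some $T_0 \in (0, 1/K)$. Because $K$ does not depend on the initial datum, the argument can be iterated on the intervals $[T_0, 2T_0], [2T_0, 3T_0], \dots$ starting from $u(T_0), u(2T_0),\dots$, yielding a unique solution on $[0,T]$ for any $T>0$; a standard gluing argument produces a unique solution in $C(\R_+, L^\infty(\R))$.

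To upgrade this mild solution to a classical one, I would observe that once $u\in C(\R_+, L^\infty(\R))$ is known, the map $s\mapsto (Gu)(s)$ is continuous from $\R_+$ into $L^\infty(\R)$, so the fundamental theorem of calculus for Banach-space-valued Bochner integrals gives $u\in C^1((0,\infty), L^\infty(\R))$ with $\partial_t u = Gu$ pointwise in $t$, and $u(0)=u_0$ by construction. Uniqueness among classical solutions then follows either directly from the fixed point statement, or by an independent Gronwall argument: if $u,\tilde u$ are two classical solutions, then $w(t) := \|u(t)-\tilde u(t)\|_{L^\infty(\R)}$ satisfies $w(t)\leq K\int_0^t w(s)\,ds$, and Gronwall's lemma forces $w\equiv 0$.

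The only mild subtlety I expect is the measurability/integrability of $s\mapsto (Gu)(s)$ as an $L^\infty(\R)$-valued function, which is not reflexive; but since $G$ is Lipschitz continuous on $L^\infty(\R)$ and $s\mapsto u(s)$ is continuous into $L^\infty(\R)$, the composition is continuous, hence Bochner integrable on $[0,T]$, and the whole scheme goes through verbatim as in the standard ODE proof in a Banach space (see e.g.\ the references cited in the statement). No special structure of the particular $G$ from Remark \ref{rem:G_example} is used beyond Lipschitz continuity.
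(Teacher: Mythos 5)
Your proposal is correct and takes essentially the same route as the paper: recast the problem as an integral equation, show the operator $v\mapsto u_0 + \int_0^t (Gv)\,ds$ is a contraction on $C([0,T'],L^\infty(\R))$ for $T'<1/K$, iterate over short intervals to cover all of $\R_+$, and then upgrade to a classical solution via continuity of $s\mapsto (Gu)(s)$ and the fundamental theorem of calculus for Bochner integrals. The only cosmetic difference is that you phrase the fixed-point argument in the abstract Banach-space language and explicitly address Bochner measurability, whereas the paper writes the iteration pointwise in $x$.
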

\begin{proof}
For $0\leq \tau < \Tau <\infty$, $v\in C([\tau,\Tau], L^\infty(\R))$, $w\in L^\infty(\R)$, we define,
	\begin{equation}\label{eq:Phi_def}
		(\Phi_w v)(x,t) := w(x) + \int[\tau][t] (Gv)(x,s)ds, \qquad t\in[\tau, \Tau],\ x\in\R.
	\end{equation}
  Let $\|v\|_{\tau,\Tau}:=\sup\limits_{t\in[\tau,\Tau]}\|v(\cdot,t)\|_\infty$.
  Then, one easily gets, that $\|\Phi_w v\|_{\tau,\Tau}<\infty$ and
  \[
  	\|\Phi_w v_1-\Phi_w v_2\|_{\tau,\Tau}\leq K (\Tau-\tau)\|v_1-v_2\|_{\tau,\Tau},
  \]
	where $K$ is the Lipschitz constant of $G$.
	Therefore, $\Phi_w$ is a contraction mapping on $C([\tau,\Tau], L^\infty(\R))$, provided that $\Tau-\tau<\frac{1}{K}$ 
	Fixing any  $\delta\in\bigl(0,\frac{1}{K}\bigr)$, one gets that there exists the limit $u$ of $(\Phi_w)^n v$, $n\to\infty$, for any $v$, on time intervals $[k\delta,(k+1)\delta]$, $k\in\N\cup\{0\}$, with the corresponding $w(x) = u(x,k\delta)$.
	Therefore, for any $0\leq\tau<\Tau$, we have that $u\in C([\tau,\Tau], L^\infty(\R))$ and
  \[
		u(x,t)=(\Phi_{u(\cdot,\tau)} u)(x,t), \quad t\in[\tau,\Tau].
  \]
	Since $G$ is Lipschitz continuous, then it follows that $u\in C(\R_+, L^\infty(\R))\cap C^1((0,\infty), L^\infty(\R))$ and it solves \eqref{eq:dudt_Gu}. The proof is completed.
\end{proof}

We introduce the following operators
\begin{align}\label{eq:def_Z_Q}
  Z_y v(x) &= v(x-y),&v\in\EE,\ y\in\RR,\\
	Q_t v(x) &= u(x,t),&t\geq0,\ x\in\RR, 
\end{align}
where $u(x,0)=v(x)$ and $u$ solves \eqref{eq:dudt_Gu}.
Thus $Z_y$ is a shift operator in $\R$, and $Q_t$ is the semiflow generated by \eqref{eq:dudt_Gu}. 
The following important property follows form the proof of Proposition \eqref{prop:exist_uniq}.
\begin{cor}\label{cor:Z_Q_are_commutative} 
	 If $Z_y$ and $G$ are commutative for all $y\in \R$, then the operators $Z_y$ and $Q_t$ are commutative, namely,
  \begin{equation}\label{eq:Z_Q_are_commutative}
  	Z_y Q_t = Q_t Z_y,\qquad y\in\RR,\ t\geq0.
  \end{equation}
\end{cor}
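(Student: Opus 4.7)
The natural route is a uniqueness argument: show that both $Z_y Q_t v$ and $Q_t Z_y v$, viewed as functions of $(x,t)$, solve the same Cauchy problem \eqref{eq:dudt_Gu}, and then invoke the uniqueness part of Proposition \ref{prop:exist_uniq}. Since $Z_y$ is a pointwise shift, it obviously commutes with the time derivative and with the pointwise initial-value operator, so the only nontrivial ingredient is its commutation with $G$, which is our standing hypothesis.

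\textbf{Key steps.} Fix $v\in L^\infty(\R)$ and $y\in\R$, and let $u(x,t):=(Q_t v)(x)$ be the classical solution to \eqref{eq:dudt_Gu} provided by Proposition \ref{prop:exist_uniq} with $u_0=v$. Define
\[
\tilde u(x,t):=(Z_y u(\cdot,t))(x) = u(x-y,t).
\]
First, $\tilde u\in C(\R_+,L^\infty(\R))\cap C^1((0,\infty),L^\infty(\R))$ because $Z_y$ is an isometry of $L^\infty(\R)$ and obviously commutes with $\partial_t$. Second, at $t=0$ we have $\tilde u(\cdot,0)=Z_y v$. Third, for $t>0$,
\[
\frac{\partial \tilde u}{\partial t}(\cdot,t)=Z_y\frac{\partial u}{\partial t}(\cdot,t)=Z_y\bigl(Gu(\cdot,t)\bigr)=G\bigl(Z_y u(\cdot,t)\bigr)=G\tilde u(\cdot,t),
\]
where the third equality is precisely the assumed commutation of $Z_y$ with $G$. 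Hence $\tilde u$ solves \eqref{eq:dudt_Gu} with initial datum $Z_y v$, so by uniqueness $\tilde u(\cdot,t)=Q_t(Z_y v)$, i.e. $Z_y Q_t v = Q_t Z_y v$. Since $v$ is arbitrary, \eqref{eq:Z_Q_are_commutative} follows.

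\textbf{Main obstacle.} There is essentially no obstacle: the only subtlety is making sure the commutation of $Z_y$ with $G$ is used in the correct place (transforming $Z_y G u(\cdot,t)$ into $G Z_y u(\cdot,t)$), and that the regularity of $\tilde u$ is inherited from that of $u$, both of which are immediate because $Z_y$ acts isometrically on $L^\infty(\R)$ and pointwise in $x$. One could alternatively build this identity directly from the Picard scheme of Proposition \ref{prop:exist_uniq} by showing inductively that $Z_y\Phi_w^n v = \Phi_{Z_y w}^n Z_y v$ and passing to the limit, but the uniqueness argument above is cleaner and shorter.
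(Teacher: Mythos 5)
Your proof is correct, and it takes a genuinely different route from the paper. The paper's argument stays inside the Picard iteration machinery of Proposition \ref{prop:exist_uniq}: it verifies the single operator identity $Z_y \Phi_{u_0} v = \Phi_{Z_y u_0} Z_y v$, iterates it, passes to the fixed-point limit on $[0,\delta]$, and then patches over successive intervals $[k\delta,(k+1)\delta]$. You instead use the uniqueness half of Proposition \ref{prop:exist_uniq}: you observe that $\tilde u(\cdot,t) := Z_y u(\cdot,t)$ has the required regularity (because $Z_y$ is a bounded linear isometry of $L^\infty$, so it commutes with Banach-space limits and the $t$-derivative), has initial datum $Z_y v$, and satisfies $\partial_t \tilde u = G\tilde u$ by the assumed commutation $Z_y G = G Z_y$; uniqueness then forces $\tilde u = Q_t Z_y v$. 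Both proofs are sound. The paper's version is more explicit about how $Z_y$ interacts with the contraction mapping, which is natural when the existence proof is already on the page; your version is shorter, does not require reopening the construction, and makes the hypothesis ``$Z_y$ commutes with $G$'' enter in exactly one line, which is arguably cleaner. You already flagged the Picard route as the alternative, so you were aware of both; the uniqueness argument you chose is perfectly acceptable here.
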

\begin{proof}
	Following the notations of the proof of Propostion \ref{prop:exist_uniq}, we have for $v\in C([0,\delta], L^\infty(\R))$, $u_0\in L_\infty(\R)$, $y\in\R$,	
	\[
		(Z_y\Phi_{u_0} v)(x,t) = (\Phi_{Z_y u_0} Z_yv)(x,t), \qquad x\in\R, t\in[0,\delta].
	\]
	Hence, we have, for $t\in[0,\delta]$, $y\in\R$,
	\[
		Z_y Q_t u_0 = Z_y \lim_{n\to\infty} \Phi^n_{u_0} v = \lim_{n\to\infty} \Phi^n_{Z_yu_0} Z_yv = Q_t Z_y u_0.
	\]
	Repeating the same argument on $[\delta, 2\delta], \cdots, [k\delta,(k+1)\delta], \cdots$, finishes the proof. 
\end{proof}
We denote, for $u\in C([0,T],L^\infty(\R))\cap C^1((0,T], L^\infty(\R))$,
\begin{equation}\label{eq:F_operator}
	\mathcal{F}u(x,t) := \dfrac{\partial u}{\partial t}(x,t) - Gu(x,t), \qquad x\in \R,\ t>0.
\end{equation}

\begin{prop}[Comparison principle]\label{prop:compar_princ}
	Let $G$ be monotone and Lipschitz on $\EE$, $T\in(0,\infty)$ be fixed and functions $u_{1},u_{2}\in C([0,T],L^\infty(\R))\cap C^1((0,T], L^\infty(\R))$, be such that, for any $(x,t)\in\X \times(0,T]$,
	\begin{gather}
		\mathcal{F}u_1(x,t) \leq \mathcal{F}u_2(x,t), \label{eq:max_pr_BUC:ineq}\\
	  0 \leq u_{1}(x,t), \qquad 0\leq u_{2}(x,t)\leq c, \qquad u_{1}(x,0)\leq u_{2}(x,0).\label{eq:init_for_compar}
	\end{gather}
	Then $u_{1}(x,t)\leq u_{2}(x,t)$, for all $(x,t)\in\X \times[0,T]$. In particular, $u_{1}\leq c$.
\end{prop}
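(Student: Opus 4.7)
The plan is to reduce to a scalar Gronwall inequality for the quantity $\psi(t) := \| (u_1(\cdot,t) - u_2(\cdot,t))_+\|_\infty$, exploiting monotonicity and Lipschitz continuity of $G$ in tandem.

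First, I would rewrite the inequality $\mathcal{F}u_1 \leq \mathcal{F}u_2$ in integrated form. Setting $w := u_1 - u_2$ and using $w(\cdot,0) \leq 0$ from \eqref{eq:init_for_compar}, I obtain
\begin{equation*}
w(x,t) \;\leq\; \int_{0}^{t} \bigl( G u_1(x,s) - G u_2(x,s) \bigr)\, ds, \qquad (x,t)\in \R\times[0,T].
\end{equation*}

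The core step is to control the integrand by $\psi(s)$. Define pointwise $\bar u(x,s) := \max\{u_1(x,s), u_2(x,s)\}$, so that $\bar u \geq u_1$ and $\bar u \geq u_2$, and $\|\bar u - u_2\|_\infty = \psi(s)$. By monotonicity of $G$, $G u_1 \leq G \bar u$, and by the Lipschitz estimate with constant $K$,
\begin{equation*}
G u_1(x,s) - G u_2(x,s) \;\leq\; G \bar u(x,s) - G u_2(x,s) \;\leq\; K\,\|\bar u(\cdot, s) - u_2(\cdot, s)\|_\infty \;=\; K\,\psi(s).
\end{equation*}
Inserting this back, taking positive parts and supremum in $x$, yields the integral inequality $\psi(t) \leq K\int_{0}^{t} \psi(s)\, ds$ on $[0,T]$. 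Since $\psi \in C([0,T])$ and $\psi(0)=0$, Gronwall's inequality forces $\psi \equiv 0$, i.e.\ $u_1 \leq u_2$ throughout $\R\times[0,T]$. The final assertion $u_1\leq c$ is immediate from $u_1\leq u_2\leq c$.

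The only delicate point is the passage $G u_1 - G u_2 \leq K\psi$, which cannot be obtained from Lipschitz alone (that would give two-sided control and prove nothing); the trick of comparing both $Gu_1$ and $Gu_2$ to the common majorant $G\bar u$, using monotonicity on one side and Lipschitz continuity on the other, is what makes the one-sided comparison work. Note that no regularity of $\bar u$ is needed since we only use it pointwise inside $G$, and the condition $u_2 \leq c$ plays no role in the comparison step itself; it is used solely for the concluding bound on $u_1$.
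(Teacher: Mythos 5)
Your proof is correct, and it takes a genuinely different route from the paper's. The paper sets $v = u_2 - u_1$ and constructs an auxiliary nonnegative solution $\tilde{v}$ to a modified fixed-point equation $\tilde{v} = v(\cdot,0) + \int_0^t \max\{F(\cdot,s,\tilde v),0\}\,ds$ by a contraction argument, then shows that, because $\tilde v \geq 0$ and $G$ is monotone, the $\max$ is vacuous, so $\tilde v$ solves the unmodified equation; uniqueness of that solution then forces $\tilde v = v \geq 0$. Your argument instead controls $\psi(t) = \|(u_1 - u_2)_+(\cdot,t)\|_\infty$ directly by Gronwall, with the decisive step being the introduction of the pointwise majorant $\bar u = \max\{u_1,u_2\}$: monotonicity gives the one-sided bound $Gu_1 \leq G\bar u$, while Lipschitz continuity gives $G\bar u - Gu_2 \leq K\,\|\bar u - u_2\|_\infty = K\psi$, yielding the crucial one-sided estimate $Gu_1 - Gu_2 \leq K\psi$ that the Lipschitz property alone cannot provide. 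Your route is shorter and avoids the fixed-point machinery (beyond what is implicit in Gronwall's lemma); the paper's construction is arguably more structural and would adapt more naturally to settings where one wants to exhibit the nonnegative difference as a solution of a related problem, but for the comparison principle itself your approach is both simpler and complete. One small point worth stating explicitly, which you glossed over: $\psi$ is continuous on $[0,T]$ because $u_1 - u_2 \in C([0,T],L^\infty(\R))$ and $f \mapsto \|f_+\|_\infty$ is continuous on $L^\infty(\R)$; this is what licenses the Gronwall step.
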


\begin{proof}
  Define the following functions for $x\in\X, t\in(0,T], w\in\EE$,
  \begin{align}
		f(x,t) &:= \mathcal{F}u_2(x,t) - \mathcal{F}u_1(x,t) \geq 0, \label{dif_pos} \\
		F(x,t,w) &:= G(w+u_1)(x,t) - Gu_1(x,t) + f(x,t), \\
		v(x,t) &:= u_2(x,t) - u_1(x,t), \label{eq:v_def}
  \end{align}

  Clearly, $v\in C([0,T],L^\infty(\R))\cap C^1((0,T], L^\infty(\R))$, and it is straightforward to check that
  \begin{equation}\label{tautology}
    \frac{\partial}{\partial t} v(x,t) = F(x,t,v(x,t)),
  \end{equation}
  for all $x\in\X$, $t\in(0,T]$. Therefore, $v$ solves the following integral equation in $L^\infty(\R)$:
  \begin{equation}
  	\begin{cases}
  		\displaystyle v(x,t)=v(x,0)+\int_0^t F(x,s,v(x,s))ds, & \quad (x,t)\in\X {\times}(0,T],\\[3mm]
  		v(x,0)=u_{2}(x,0)-u_{1}(x,0), & \quad x\in\X,
  	\end{cases}\label{eq:max_pr_BUC:u2-u1_lin}
  \end{equation}
  where $v(x,0)\geq0$, by \eqref{eq:init_for_compar}.
  
  Consider also another integral equation in $L^\infty(\R)$:
  \begin{align}
	\tilde{v}(x,t)  &= (\Psi \tilde{v})(x,t), \qquad (x,t)\in\X\times(0,T], \label{eq:max_pr_BUC:pos_sol} \\
  \shortintertext{where}
	(\Psi w)(x,t) &:= v(x,0) + \int_0^t\max\{F(x,s,w(x,s)),0\}\,ds,\qquad w\in C([0,T],L^\infty(\R)). \label{eq:max_pr_BUC:Psi}
  \end{align}
	It is easily seen that $0\leq w\in C([0,T],L^\infty(\R))$ yields $0\leq \Psi w\in C([0,T],L^\infty(\R))$.
	Next, for any $\tilde{T}< T$ and for any $w_{1}$, $w_{2}$ from $C([0,\tilde{T}],L^\infty(\R,\R_+))$, one gets by \eqref{eq:max_pr_BUC:Psi} that
  \begin{align}
    \|\Psi w_{1}-\Psi w_{2}\|_{\tilde{T}} &\leq\tilde{T} K \|w_{2}-w_{1}\|_{\tilde{T}}, \label{contractioncomp}
  \end{align}
 where $K>0$ is the Lipschitz constant of $G$   and  we used  the elementary inequality $\lvert \max\{a,0\}-\max\{b,0\}\rvert\leq |a-b|$, $a,b\in\R$.
  Therefore, for $\tilde{T}<K^{-1}$, $\Psi$ is a contraction on $C([0,\tilde{T}],L^\infty(\R,\R_+))$. Thus, there exists a unique solution
  to \eqref{eq:max_pr_BUC:pos_sol} on $[0,\tilde{T}]$. In the same
  way, the solution can be extended on $[\tilde{T},2\tilde{T}], [2\tilde{T},3\tilde{T}]$, \ldots, and therefore, on the whole $[0,T]$.
  By \eqref{eq:max_pr_BUC:pos_sol}, \eqref{eq:max_pr_BUC:Psi},
  \begin{equation}\label{result}
  	\tilde{v}(x,t)\ge v(x,0)\ge0,
  \end{equation}
  hence, by \eqref{eq:max_pr_BUC:Psi},
  \begin{equation}\label{safas}
  \tilde{v}(x,t)=v(x,0)+\int_0^t F(s,\tilde{v}(x,s))\,ds=:\Xi(\tilde{v})(x,t).
  \end{equation}
	Since $0\leq\tilde{v}\in C([0,T],L^\infty(\R))$ and $G$ is monotone, \eqref{safas} implies that $\tilde{v}$ is a solution to \eqref{eq:max_pr_BUC:u2-u1_lin} as well.
	The same estimate as in \eqref{contractioncomp} shows that $\Xi$ is a contraction on $C([0,\tilde{T}],L^\infty(\R))$, for small enough $\tilde{T}$. Thus $\tilde{v}=v$ on $\X \times[0,\tilde{T}]$, and one continues  this consideration as before on the whole $[0,T]$.
	Then, by \eqref{result}, $v(x,t)\geq0$ on $\X \times[0,T]$,  and the statement of the proposition follows.
\end{proof}
Let us recall that $B_\sigma$ denotes the interval $[-\sigma,\sigma]$ and $\EEp$ is defined by \eqref{def:Lp_plus}.
\begin{lem}\label{lem:hair_trigger_effect}
	Let exists $\sigma>0$ such that $a(x) \geq \sigma,\ x\in B_\sigma$. Suppose also that $u_0\in \EEp$ and $u$ be the corresponding solution to \eqref{eq:basic_meso}. 

	Then for any $r>0$, the following limit holds 
	\begin{equation}\label{eq:hair_trigger}
		\lim_{t\ra\infty} \inf_{x\in B_r} u(x,t) \ra \infty. 
	\end{equation}
\end{lem}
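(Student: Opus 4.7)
I would prove \eqref{eq:hair_trigger} in two phases. First, a spreading argument yields, for every $R>0$, a finite time after which $u$ is bounded away from zero on $B_R$. Second, once such a plateau sits on a ball large enough that $a$ has at least half its mass inside, $\partial_t u$ becomes uniformly positive on $B_r$, forcing $u\to\infty$ linearly there. Two reductions are used throughout: by Corollary~\ref{cor:Z_Q_are_commutative} I may translate so that $x_0=0$, and by Proposition~\ref{prop:compar_princ} with the solution $w\equiv 0$ I have $u\geq 0$, so $\partial_t u=\min\{a*u,1\}\geq 0$ and $u(x,\cdot)$ is non-decreasing for every $x\in\R$.

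\textbf{Spreading.} I would build sequences $\rho_n\to\infty$, $c_n>0$, $t_n\to\infty$ with $u(\cdot,t_n)\geq c_n$ on $B_{\rho_n}$, starting from $\rho_0=\delta$, $c_0=\delta$, $t_0=0$ (possibly replacing $\delta$ by a smaller positive number to ensure $\delta\leq\sigma$). For the inductive step, monotonicity gives $u(\cdot,t)\geq c_n\mathbf{1}_{B_{\rho_n}}$ for all $t\geq t_n$, and the hypothesis $a\geq\sigma$ on $B_\sigma$, combined with an elementary one-dimensional lower bound on $|B_{\rho_n}\cap[x-\sigma,x+\sigma]|$, gives some $\ell(\rho_n)>0$ with
\[
(a*u)(x,t)\geq c_n\,\sigma\,\ell(\rho_n)>0,\qquad x\in B_{\rho_n+\sigma/2},\ t\geq t_n.
\]
Hence $\partial_t u\geq c_{n+1}:=\min\{c_n\sigma\,\ell(\rho_n),1\}$ on $B_{\rho_n+\sigma/2}$, and integrating over $[t_n,t_n+1]$ gives $u(\cdot,t_{n+1})\geq c_{n+1}$ on $B_{\rho_{n+1}}$ with $t_{n+1}=t_n+1$ and $\rho_{n+1}=\rho_n+\sigma/2$. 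Since $c_n$ stays positive at every step and $\rho_n\to\infty$, for each $R>0$ this produces $T_R$ and $\kappa_R>0$ with $u(\cdot,T_R)\geq\kappa_R$ on $B_R$.

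\textbf{Amplification and main obstacle.} Fix $r>0$ and pick $R>r$ with $\int_{B_{R-r}}a(z)\,dz\geq 1/2$, which is possible because $\|a\|_1=1$. Apply the spreading step with this $R$: for $x\in B_r$ and $t\geq T_R$,
\[
(a*u)(x,t)\geq\kappa_R\int_{B_R}a(x-y)\,dy\geq\kappa_R\int_{B_{R-r}}a(z)\,dz\geq\kappa_R/2,
\]
so $\partial_t u(x,t)\geq\lambda:=\min\{\kappa_R/2,1\}>0$ on $B_r$ for all $t\geq T_R$. Integrating gives $u(x,t)\geq\lambda(t-T_R)$ uniformly in $x\in B_r$, which is \eqref{eq:hair_trigger}. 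The only nontrivial ingredient is the geometric lower bound $\ell(\rho_n)$ used in the spreading step: a direct computation shows $|B_{\rho_n}\cap[x-\sigma,x+\sigma]|\geq\min(2\rho_n,\sigma/2)$ for $x\in B_{\rho_n+\sigma/2}$, so the iteration uses $\ell(\rho_n)=2\rho_n$ for the first few small steps (a bootstrap from $\rho_0=\delta$) and the uniform value $\ell(\rho_n)=\sigma/2$ once $\rho_n\geq\sigma/4$. In either regime $c_n$ stays strictly positive at each step, which is all that the argument needs.
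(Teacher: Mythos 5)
Your proof is correct, and the spreading phase takes a genuinely different route from the paper's. The paper compares $u$ with the solution $v$ of the auxiliary \emph{linear} equation $\partial_t v = d*v$, where $d = \sigma\1_{B_\sigma}\leq a$ and $v_0=\delta\1_{B_\delta(x_0)}\leq u_0$; it writes $v$ as the exponential series $\sum_j t^j D^j v_0/j!$, reads off positivity of $v(\cdot,t)$ on $B_{r+\sigma}$ from the $j$-th term, and invokes Proposition~\ref{prop:compar_princ} only on $(0,T)$ with $T=\inf\{t:\|v(\cdot,t)\|_\infty\geq 1\}$ so that $d*v\leq\min\{a*v,1\}$ remains valid. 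You instead bootstrap directly on the nonlinear flow in unit time steps, widening the positivity region by $\sigma/2$ per step using the elementary overlap bound $|B_\rho\cap[x-\sigma,x+\sigma]|\geq\min(2\rho,\sigma/2)$ for $x\in B_{\rho+\sigma/2}$ (which checks out in all cases). Both arguments then finish identically: once $u$ is bounded below on a ball across which $a$ has mass bounded away from zero, $\partial_t u=\min\{a*u,1\}$ is uniformly positive on $B_r$, giving linear growth there. Your route avoids the explicit series and the $T$-cutoff bookkeeping, at the cost of a small geometric lemma; the paper's route imports positivity from an exactly solvable linear subsolution in one shot. One cosmetic remark: justifying $u\geq 0$ via Proposition~\ref{prop:compar_princ} with $w\equiv 0$ is formally circular, since that proposition requires $0\leq u_2\leq c$ as a hypothesis; but $u\geq 0$ is immediate (the right-hand side of \eqref{eq:basic_meso} is nonnegative as long as $u$ is, so nonnegativity propagates), and the paper likewise takes it for granted, so this is not a substantive gap.
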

\begin{proof}
	By assumptions of  the lemma,
	\[
		d(x) := \sigma \1_{B_\sigma} (x) \leq a(x),\quad x\in\RR.
	\]
 Since 
		 $u_0\in\EEp$,
		  there exist $\delta>0$, $x_0\in\R$, such that $u_0(x)\geq v_0(x) := \delta\1_{B_\delta(x_0)}(x)$, $x\in\R$. 
	Let $v$ satisfies 
	\[
		\diff{v}{t}(x,t) = (d*v)(x,t), \quad x\in\R,\ t>0; \qquad v(x,0) = v_0(x) \leq u_0(x).
	\]
	We define $Df:= d*f$. Since for any $r_1 \leq r_2$,
	\[
		\Big( \1_{B_{2r_1}}*\1_{B_{2r_2}} \Big)(x) \geq r_1 \1_{B_{2r_2+r_1}}(x),\quad x\in\RR,
	\]
	the following estimate holds
	\[
		\delta \sum_{j\geq 0} \big( \min\{\delta,\sigma\} \big)^j \frac{t^j \sigma^j}{2^jj!} \1_{B_{\delta+\sigma j / 2}}(x) \leq \sum_{j\geq 0} \frac{t^j D^j v_0 (x)}{j!} = v(x,t),\quad x\in\RR,\ t\geq0.
	\]
	Hence, for any $t>0$, $r>0$,
	\[
		\nu_t := \inf_{x\in B_{r+\sigma}} v(x,t) >0.
	\]
	Let us define,
	\[
		T:= \inf \{ t>0, \|v(\cdot,t)\|_\infty \geq 1 \} >0.
	\]
	By Proposition \ref{prop:compar_princ},  applied with $Gu = \min\{ a*u, 1\}$,
	\[
		u(x,t_0) \geq v(x,t_0) \geq \nu_{t_0}, \quad x\in B_{r+\sigma},\ t_0\in (0,T).
	\]
	Since $u\geq0$, then by \eqref{eq:basic_meso}, $u(x,t)$ is nondecreasing in $t$. Thus for all $t\geq t_0$, $x\in B_{r}$,
	\[
		\frac{\partial u}{\partial t}(x,t) = \min\{(a*u)(x,t),1\} \geq \min\{(a*u)(x,t_0),1\} \geq \min\{ \frac{\sigma \nu_{t_0}}{2} ,1\} >0.
	\]
	As a result, \eqref{eq:hair_trigger} holds. The proof is completed.
\end{proof}

From now on we study the case when $a(x)$ is defined by \eqref{stampede}, with $\alpha>\frac{1}{2}$.

\begin{lem}\label{lem:conv_of_long_tail_and_o_small}
	Let $a(x)$ be defined by \eqref{stampede} with $\alpha>\frac{1}{2}$ and $u_0\in\EEp$. Then there exists $R>0$ such that the following statements hold 
	\begin{enumerate}
		\item For all $|x|\geq R$,
			\begin{equation}\label{eq:a_conv_u0_est_comp_sup}
				|x|^{-2\alpha} \preceq a(x) \preceq (a*u_0)(x).
			\end{equation}
		\item If there exist $\mu>0,\ \rho\in\R,$ such that $u_0(x) \geq \mu,\ x\leq \rho,$ then for all $x\geq R$,
			\begin{equation}\label{eq:a_conv_u0_est_monotone}
				x^{-2\alpha+1} \preceq \inte{x}{\infty} a(y) dy \preceq (a*u_0)(x).
			\end{equation}
	\end{enumerate}
\end{lem}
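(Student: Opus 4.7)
The plan is to obtain each of the two estimates by elementary manipulations: the lower bounds $|x|^{-2\alpha} \preceq a(x)$ and $x^{-2\alpha+1} \preceq \int_x^\infty a(y)\,dy$ follow immediately from the definition \eqref{stampede}, and the main content of the lemma is the lower bound on $(a*u_0)(x)$, which we extract from the non-degeneracy of $u_0$ (built into the definition of $\EEp$) in case $(1)$, and from the additional monotone-like hypothesis in case $(2)$.

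For part $(1)$, first observe that $(1+|x|^2)^\alpha \leq 2^\alpha |x|^{2\alpha}$ for $|x|\geq 1$, hence $a(x) \succeq |x|^{-2\alpha}$ for $|x|$ large. Since $u_0\in\EEp$, we may pick $\delta>0$ and $x_0\in\R$ with $u_0(y)\geq \delta$ for $y\in B_\delta(x_0)$, so that
\[
(a*u_0)(x) \;\geq\; \delta \int_{B_\delta(x_0)} a(x-y)\,dy.
\]
The key observation is then that $a$ is \emph{comparable at infinity} in the following sense: for any fixed compact set $K\subset\R$ there is a constant $C_K>0$ such that $a(x-y) \geq C_K a(x)$ for all $y\in K$ and all $|x|$ sufficiently large (this follows from $\frac{1+|x|^2}{1+|x-y|^2}\to 1$ uniformly in $y\in K$ as $|x|\to\infty$). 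Applying this with $K=\overline{B_\delta(x_0)}$ gives $(a*u_0)(x) \succeq a(x)$ for $|x|\geq R$ with some $R>0$, completing $(1)$.

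For part $(2)$, a direct integration shows $\int_x^\infty a(y)\,dy \succeq x^{-2\alpha+1}$ for large $x$ (the exact asymptotic $\int_x^\infty a(y)dy \sim \frac{c_\alpha}{2\alpha-1}x^{-2\alpha+1}$ can be read off from $a(y)\sim c_\alpha y^{-2\alpha}$). For the upper bound on $\int_x^\infty a(y)\,dy$ in terms of $(a*u_0)$, use the hypothesis $u_0(y)\geq \mu$ for $y\leq \rho$:
\[
(a*u_0)(x) \;=\; \int_\R a(x-y) u_0(y)\,dy \;\geq\; \mu \int_{-\infty}^{\rho} a(x-y)\,dy \;=\; \mu \int_{x-\rho}^{\infty} a(z)\,dz.
\]
For $x\geq R$ sufficiently large (e.g.\ $x\geq 2\rho_+ + 1$), $x-\rho \asymp x$, so $\int_{x-\rho}^\infty a(z)\,dz \succeq x^{-2\alpha+1} \succeq \int_x^\infty a(y)\,dy$, which yields \eqref{eq:a_conv_u0_est_monotone}.

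No step looks difficult; the only mild care needed is the uniform-comparability argument $a(x-y)\asymp a(x)$ for $y$ in a compact set and $|x|$ large, used in part $(1)$, and the change of variables together with the estimate $x-\rho\asymp x$ used in part $(2)$. Choosing a single $R$ large enough for both cases produces the claim.
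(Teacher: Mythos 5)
Your proof is correct. Part (1) coincides with the paper's: both reduce the lower bound on $(a*u_0)(x)$ to the uniform comparability of $a(x-y)$ with $a(x)$ for $y$ in a compact set containing $B_\delta(x_0)$ and $|x|$ large, which you express via $\tfrac{1+|x|^2}{1+|x-y|^2}\to 1$ and the paper expresses via $a(x)\sim a(|x|+r)$ together with the monotonicity of $a$.

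For part (2) you take a genuinely different and more elementary route. The paper's proof is indirect: it constructs a smooth decreasing $v_0\in\EEp$ with $v_0\leq u_0$ and $\partial_x v_0\leq 0$ compactly supported, applies part (1) to $-\partial_x v_0$, and then integrates the resulting inequality from $x$ to $\infty$ to pass from $a(x)$ to $\int_x^\infty a$. Your argument skips the auxiliary function entirely: bounding $u_0\geq\mu$ on $(-\infty,\rho]$, a change of variables gives $(a*u_0)(x)\geq\mu\int_{x-\rho}^\infty a(z)\,dz$, and the regular variation of $\int_x^\infty a$ with index $-(2\alpha-1)$ makes $\int_{x-\rho}^\infty a \asymp \int_x^\infty a$ for large $x$. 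This is shorter and avoids invoking part (1) as a black box, at the cost of using the explicit tail asymptotics of $a$ (which part (1) of the paper also uses, so nothing extra is really needed). Both approaches are valid; yours is a simplification.
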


\begin{proof}
	We start with the first part of the lemma.
 Without loss of generality we may assume that  $u_0 \in L^1(\RR)$.   

 By \eqref{def:Lp_plus}, there exist $\delta>0$ and $x_0\in\R$, such that $u_0(x)\geq \delta$, $x\in B_\delta(x_0)$. Since for any $r\geq |x_0|$, $a(x) \sim a(|x|+r)$ as $|x| \ra \infty$, then there exists $R>0$ such that the following estimate holds, for all $|x| \geq R$, 
  \[
		|x|^{-2\alpha} \preceq a(x) \preceq a(|x|+r) \inte{|y|\leq r}{} u_0(y)dy \leq \inte{|y| \leq r}{} a(x-y)u_0(y)dy \leq (a*u_0)(x), 
	\]
	Now we prove the second part of the lemma. By the assumptions on $u_0$, there exists decreasing smooth $v_0 \in \EEp$ such that $v_0(x) \ra 0$ as $x\ra\infty$, $v_0 \leq u_0$ and $\frac{ \partial v_0(x)}{\partial x} \leq 0$ is compactly supported.
		Then by the first part of the lemma applied to $-\frac{ \partial v_0(x)}{\partial x}$ instead of $u_0$, there exists $R>0$ such that
	\[
		x^{-2\alpha} \preceq a(x) \preceq -\big(a*\frac{\partial v_0}{\partial x}\big)(x),\quad x\geq R.
	\]
	Hence, for all $x\geq R$,				
  \[
		x^{-2\alpha+1} \preceq \inte{x}{\infty} a(y) dy \preceq -\inte{x}{\infty} \big(a*\frac{\partial v_0}{\partial y}\big)(y) dy = (a*v_0)(x) \leq (a*u_0)(x).
	\]
	The proof is completed.
\end{proof}

\begin{lem}\label{lem:subsol}
	Let $a$ be defined by \eqref{stampede} with $\alpha>\frac{1}{2}$, and we define 
	\begin{align}
		h(x,t) = \1_{\R_-}(x) + \min \bigl\{ 1, x^{-2\alpha+1} e^{(1-\eps)t} \1_{(0,\infty)}(x) \bigr\},\quad g(x,t) = \min\bigl\{1,|x|^{-2\alpha} e^{(1-\eps) t}\bigr\}. \label{eq:def_g_and_h}
	\end{align}
Then, for any $\eps\in(0,1)$ there exists $\tau_0=\tau_0(\eps)>0$ such that for all  $l>0$ the functions
	\begin{align} 
		H(x,t,l) := \frac{1}{l} \int[t][t+l] h(x,s)ds, \quad G(x,t,l):= \frac{1}{l} \int[t][t+l] g(x,s)ds, \label{eq:def_G_and_H}
	\end{align}
	are sub-solutions to $\partial_t u = a*u$ on $[\tau_0,\infty)$, 
Namely (c.f.  \eqref{eq:F_operator}), for all $l>0$, 
	\begin{equation*}
		\frac{\partial G}{\partial t} (x,t,l) \leq (a*G)(x,t,l),\qquad \frac{\partial H}{\partial t}(x,t,l) \leq (a*H)(x,t,l), \qquad x\in\RR,\ t\geq \tau_0. 
	\end{equation*}
	In this case one can understand $g$ and $h$ as `weak' sub-solutions to $\partial_t u = a*u$.
\end{lem}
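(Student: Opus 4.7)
The plan is to reduce the sub-solution property of $G$ and $H$ to a pointwise-in-time inequality for $g$ and $h$. For each fixed $x\in\R$, the function $s\mapsto g(x,s)$ is absolutely continuous, with a.e.\ derivative
\[
\partial_s g(x,s) = (1-\eps)g(x,s)\1_{\{g(x,s)<1\}}, \qquad \partial_s h(x,s) = (1-\eps)h(x,s)\1_{\{h(x,s)<1\}}.
\]
By the fundamental theorem of calculus and Fubini,
\[
\frac{\partial G}{\partial t}(x,t,l) = \frac{1}{l}\int[t][t+l]\partial_s g(x,s)\,ds, \qquad (a*G)(x,t,l) = \frac{1}{l}\int[t][t+l](a*g)(x,s)\,ds,
\]
and the same identities hold for $H$. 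Hence it suffices to prove the pointwise estimate
\begin{equation}\label{eq:plan_ptw}
\partial_s g(x,s)\leq (a*g)(x,s), \qquad s\geq \tau_0,\ x\in\R,
\end{equation}
together with its analogue for $h$; averaging in $s$ then delivers the claim uniformly in $l>0$.

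Where $g(x,s)=1$ the estimate \eqref{eq:plan_ptw} is trivial since the left-hand side vanishes. In the complementary region $|x|>R(s):=e^{(1-\eps)s/(2\alpha)}$ it reduces to $(a*g)(x,s)\geq (1-\eps)g(x,s)$. The key tool is the scaling identity
\[
(a*g)(x,s) = (a_s*\bar g)(\xi), \qquad g(x,s)=\bar g(\xi), \qquad \xi:=x/R(s),
\]
with $\bar g(\xi):=\min\{1,|\xi|^{-2\alpha}\}$ and $a_s(u):=R(s)a(R(s)u)$, obtained from the substitution $y=R(s)w$ and the identity $e^{(1-\eps)s}=R(s)^{2\alpha}$. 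The rescaled kernel $a_s$ satisfies $a_s\geq 0$, $\int a_s=1$ and $\int_{|u|>\delta}a_s(u)\,du = \int_{|v|>\delta R(s)}a(v)\,dv \to 0$ as $s\to\infty$ for every $\delta>0$, so $\{a_s\}$ is an approximate identity. Since $\bar g$ is bounded and Lipschitz (with constant $2\alpha$) on $\R$, the standard $\omega$-$\delta$ estimate gives uniform convergence $a_s*\bar g\to\bar g$ on $\R$. To upgrade this to the relative bound $(a_s*\bar g)(\xi)\geq (1-\eps)\bar g(\xi)$ I would split $\xi$ into two regimes: on a bounded set $|\xi|\in[1,M]$ (with $M=M(\eps)$ chosen below) the profile $\bar g$ is bounded away from zero and the uniform convergence directly controls the ratio; in the tail regime $|\xi|>M$ the localized lower bound
\[
(a_s*\bar g)(\xi) \geq (1+K/|\xi|)^{-2\alpha}\bar g(\xi)\int[-K][K] a_s(u)\,du,
\]
obtained by bounding $\bar g(\xi+u)\geq\bar g(\xi+K)$ on $|u|\leq K$, furnishes the required relative accuracy once $K$ is large (so $K/M$ is small) and $s$ is large (so $\int_{|u|<K}a_s$ is close to $1$). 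Choosing $\tau_0=\tau_0(\eps)$ so that both regimes yield the factor $(1-\eps)$ establishes \eqref{eq:plan_ptw}.

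The argument for $h$ proceeds identically with $R''(t):=e^{(1-\eps)t/(2\alpha-1)}$, rescaled profile $\bar h(\xi):=1$ for $\xi\leq 1$ and $\bar h(\xi):=\xi^{1-2\alpha}$ for $\xi>1$ (Lipschitz with constant $2\alpha-1$), and rescaled kernel $a''_t(u):=R''(t)a(R''(t)u)$; the scaling identity $(a*h)(x,t)=(a''_t*\bar h)(x/R''(t))$ reduces the $h$-analogue of \eqref{eq:plan_ptw} to the same approximate-identity argument. Integrating \eqref{eq:plan_ptw} and its $h$-counterpart over $[t,t+l]$ and dividing by $l$ then delivers $\partial_t G\leq a*G$ and $\partial_t H\leq a*H$ for all $l>0$ and $t\geq \tau_0$. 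The main technical obstacle is precisely the uniform-in-$\xi$ passage from the soft uniform convergence $\|a_s*\bar g-\bar g\|_\infty\to 0$ to the relative inequality $(a_s*\bar g)/\bar g\geq 1-\eps$: this is delicate in the tail regime where both sides vanish, and is what forces the localized window estimate above.
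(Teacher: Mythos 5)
Your proposal is correct, and the reduction from $H,G$ to the pointwise-in-time inequality for $h,g$ (via absolute continuity of $t\mapsto g(x,t)$ and averaging) matches the paper's opening moves. The core estimate $(a*g)/g \geq 1-\eps$ on the tail, however, is handled by a genuinely different route. The paper works directly with $g,h$ in the original variable, restricts the convolution to an asymmetric, time-dependent window $[-\sqrt{r_t},r_t]$, and runs a three-case analysis on the position of $x$ relative to $r_t$; the single technical input is the elementary estimate that $(r_t/(x-y))^{2\alpha-1}\geq 1-\delta$ when $|y|\leq\sqrt{r_t}$ and $x\gtrsim r_t$. You instead rescale $x\mapsto\xi=x/R(s)$ to a \emph{fixed} profile $\bar g$ and a rescaled kernel $a_s$, invoke the approximate-identity theorem ($a_s*\bar g\to\bar g$ uniformly since $\bar g$ is bounded Lipschitz), and then patch together two regimes: uniform convergence on $1\leq|\xi|\leq M$ where $\bar g$ is bounded below, and a fixed-window localization on $|\xi|>M$. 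Your version is more modular and makes the scaling structure transparent; the paper's version avoids the approximate-identity machinery at the cost of a longer case analysis, and its shrinking (relative) window $\sqrt{r_t}/r_t\to 0$ gives a single uniform estimate valid down to $x\approx r_t$ without a separate "bounded-$\xi$" regime. One slip to fix: you write ``once $K$ is large (so $K/M$ is small)'' --- the dependence is reversed. You need $K/M$ small to make $(1+K/M)^{-2\alpha}$ close to $1$, which means either $K$ small or $M$ large; for each fixed $K$, $\int_{|u|\leq K}a_s\,du\to 1$ as $s\to\infty$ regardless, so the correct order is: pick $M$ large (or $K$ small with $K<M-1$), then take $\tau_0$ large enough for both the uniform-convergence bound on $[1,M]$ and the mass condition $\int_{|u|\leq K}a_s\,du>1-\eps/2$ to hold.
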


\begin{proof}

	We denote $r_t = \exp(\frac{t-\eps t}{2\alpha-1})$.
	Note that $h(x,t) = 1 \Leftrightarrow x \leq r_t$.
	Since $t\to h(x,t)$ is absolutely continuous, then for all $x\in\R$ and almost all $t>0$, we have
	\begin{equation}\label{eq:h_subsol_sufficient}
		-\frac{\partial h}{\partial t}(x,t)  + (a*h)(x,t) = -(1-\eps) h(x,t) \1_{x\geq r_t} + (a*h)(x,t).
	\end{equation}
	Note that
	\begin{align}
		\frac{\partial H}{\partial t} (x,t,l) &= \frac{h(x,t+l)-h(x,t)}{l} = \frac{1}{l} \int[t][l+t] \frac{\partial h}{\partial t}(x,s)ds, \label{eq:dH_dt}\\
		(a*H)(x,t,l) &= \frac{1}{l} \int[t][l+t] (a*h)(x,s)ds. \label{eq:a_conv_H}
	\end{align}
	Hence, by \eqref{eq:dH_dt}, $H\in C(\R_+,L^\infty(\R))\cap C^1((0,\infty),L^\infty(\R))$.
Moreover, by \eqref{eq:dH_dt} and \eqref{eq:a_conv_H}, ${\partial_t h \leq a*h }$, for all $x\in\R$ and almost all $t>0$, yields ${\partial_t H \leq a*H}$, for $H$ as a vector valued function.
Thus, it is sufficient to check that the right-hand side of \eqref{eq:h_subsol_sufficient} is non-negative. 

 Take $\delta\in\bigl(0,1)$. There exists $x_0=x_0(\delta) > 0$, such that 
	\begin{equation}\label{eq:b_over_b_geq_delta1}
		\sup_{|y|\leq \sqrt{x}} \Big(\frac{x+y}{x}\Big)^{2\alpha-1}\geq 1-\delta,\qquad x\geq x_0.
	\end{equation}
	
	Let $\tau>0$ be such that $r_t\geq x_0$, $t\geq \tau$.
	By \eqref{eq:h_subsol_sufficient},  in order to show that $h$ is a subsolution, it is sufficient to prove 
	that there exists $t_0=t_0(\eps,\delta)>\tau$, such that
	\begin{equation}\label{eq:provethisnice}
		\frac{(a*h)(x,t)}{h(x,t)}\geq (1- \delta) \int_{-\sqrt{r_t}}^{r_t} a(y) dy
	\end{equation}
	for all $x\in \R$ and $t\geq t_0$. Note that, 
	\begin{equation}\label{eq:aplus_conv_g}
		(a*h)(x,t) \geq \int_{-\sqrt{r_t}}^{r_t} a(y) h(x-y,t)dy  
	\end{equation}
	for $x\in\R$ and $t>\tau$.

	1. Let $x\in(-\infty,r_t-\sqrt{r_t})$, $t>\tau$. 
	Since $h(x,t)=1$, for $x\leq r_t$, then we have  
	\begin{equation}
		\frac{(a*h)(x,t)}{h(x,t)} = \frac{\int_\R a(y) h(x-y,t)dy}{h(x,t)} \geq \int[\R] a(y) \1_{x-y\leq r_t}(y) dy = \int[x-r_t][\infty] a(y)dy \geq \int[-\sqrt{r_t}][\infty] a(y)dy,
	\end{equation}
	and \eqref{eq:provethisnice} holds. 

	2. Let $x\in[r_t-\sqrt{r_t},r_t)$, $t>\tau$. 
		Note that $h(x,t) = 1$, and $h(x-y,t) = 1$ for $y \geq x-r_t$. 
  Then \eqref{eq:aplus_conv_g} yields, that
	\begin{equation}
		\frac{(a*h)(x,t)}{h(x,t)} \geq  \int_{x-r_t}^{r_t} a(y) dy+ \int_{-\sqrt{r_t}}^{x-r_t} a(y) \Big(\frac{r_t}{x-y}\Big)^{2\alpha-1}dy. \label{eq:intermediate}
	\end{equation}
	Next, for the considered $x$, $-\sqrt{r_t}\leq y\leq x-r_t$ yields $0\leq x-y-r_t<\sqrt{r_t}$, and hence, by \eqref{eq:b_over_b_geq_delta1}, there exists $t_1>\tau$ such that for all $t\geq t_1$ and $x\in[r_t-\sqrt{r_t},r_t)$
	\[
		\Big(\frac{r_t}{x-y}\Big)^{2\alpha-1} = \Big(\frac{r_t}{r_t+(x-y-r_t)}\Big)^{2\alpha-1} \geq \Big(\frac{r_t}{r_t+\sqrt{r_t}}\Big)^{2\alpha-1} \geq 1- \delta,
	\]
that, together  with \eqref{eq:intermediate}, implies \eqref{eq:provethisnice}.

	3. Let $x\geq r_t$, $t>\tau$. Then, by \eqref{eq:aplus_conv_g},
	\begin{equation}
		\frac{(a*h)(x,t)}{h(x,t)} \geq \frac{x^{2\alpha-1}}{e^{(1-\eps) t}} \int_{x-r_t}^{r_t} a(y) dy 
		+\int_{-\sqrt{r_t}}^{x-r_t} a(y) \Big(\frac{x}{x-y}\Big)^{2\alpha-1} dy.\label{eq:intermediate2}
	\end{equation}
	Next, $e^{(1-\eps) t}=r_t^{2\alpha-1} \leq x^{2\alpha-1}$ for $t>\tau$.
	The latter also implies that $(x-y)^{2\alpha-1}\leq x^{2\alpha-1}$ if $0\leq y\leq x-r_t$.
Finally,  by \eqref{eq:b_over_b_geq_delta1}, there exists $t_2>t_1$, such that $x^{2\alpha-1} \geq (1-\delta)(x-y)^{2\alpha-1}$, if only $-\sqrt{r_t}\leq y<0$, $x\geq r_t$, $t\geq t_2$.
As a result, \eqref{eq:intermediate2} implies \eqref{eq:provethisnice}, which is proved hence for all $x\in\R$ and $t\geq t_2$.
The proof for $g(x,t)$ with $r_t = \exp(\frac{t-\eps t}{2\alpha})$ is similar.
\end{proof}

%
\begin{lem}\label{lem:a_conv_agamma_equiv_agamma}
	Let $a$ be defined by \eqref{stampede} with $\alpha>\frac{1}{2}$. Then for any $\gamma\in(\frac{1}{2\alpha},1)$ the following limit holds,
 \begin{equation}\label{eq:a_conv_a_gamma}
	 \frac{a*a^\gamma(x)}{a^\gamma(x)} \to 1, \qquad |x|\to \infty.
 \end{equation}
\end{lem}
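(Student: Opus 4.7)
The plan is to prove \eqref{eq:a_conv_a_gamma} by the standard split used for regularly‑varying tails. First I would note that the assumption $\gamma > \frac{1}{2\alpha}$ is equivalent to $2\alpha\gamma > 1$, so $a^\gamma \in L^1(\R)$, while $a^\gamma(x) \sim c_\alpha^\gamma |x|^{-2\alpha\gamma}$ as $|x|\to\infty$. Writing
\[
\frac{(a*a^\gamma)(x)}{a^\gamma(x)} = \int_\R a(y)\left(\frac{1+|x|^2}{1+|x-y|^2}\right)^{\alpha\gamma} dy,
\]
I would split the domain into $A_x := \{y\in\R : |y|\leq |x|/2\}$ and its complement.

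On $A_x$ the plan is dominated convergence. For $y\in A_x$ one has $|x-y|\geq |x|/2$, so the integrand is bounded above by
\[
a(y)\left(\frac{1+|x|^2}{1+|x|^2/4}\right)^{\alpha\gamma} \leq C\, a(y)
\]
uniformly in $y\in A_x$ and in $|x|$ large, with $C$ approaching $2^{2\alpha\gamma}$. For each fixed $y$, the ratio inside tends to $1$ and eventually $y\in A_x$. Since $a\in L^1(\R)$ and $\int_\R a\,dy = 1$, dominated convergence yields that this piece tends to $1$ as $|x|\to\infty$.

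For $A_x^c$ a crude bound suffices. For $|y|>|x|/2$ we have $a(y)\leq c_\alpha(1+|x|^2/4)^{-\alpha}\leq C|x|^{-2\alpha}$ once $|x|$ is large, hence
\[
\int_{A_x^c} a(y)\,a^\gamma(x-y)\,dy \leq C|x|^{-2\alpha}\|a^\gamma\|_1.
\]
Dividing by $a^\gamma(x)\geq c|x|^{-2\alpha\gamma}$ gives the bound $C|x|^{-2\alpha(1-\gamma)}$, which vanishes as $|x|\to\infty$ because $\gamma<1$. Adding the two contributions produces the desired limit $1$.

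There is no genuine obstacle; the only point that needs a moment of care is verifying the uniform dominating function on $A_x$. The two hypotheses on $\gamma$ play transparent complementary roles: $\gamma > \frac{1}{2\alpha}$ is exactly what makes $a^\gamma$ integrable, which controls the tail, while $\gamma < 1$ is what makes the tail decay strictly faster than $a^\gamma(x)$, so that the main contribution from $A_x$ is asymptotically dominant.
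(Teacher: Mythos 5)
Your argument is correct, and it takes a genuinely different route from the paper's. The paper uses a three-region decomposition $D_1 = [-|x|^\delta, |x|^\delta]$, $D_2 = (-|x|/2, -|x|^\delta) \cup (|x|^\delta, |x|/2)$, $D_3 = \{|y| \geq |x|/2\}$, with explicit estimates on each region: on $D_1$ the ratio $\frac{1+|x|^2}{1+|x-y|^2}$ is close to $1$ uniformly in $y$ and $\int_{D_1} a \to 1$; on $D_2$ the ratio is bounded by $4$ while $\int_{|y| > |x|^\delta} a \to 0$; on $D_3$ one bounds $a(y) \preceq |x|^{-2\alpha}$ and uses $a^\gamma \in L^1$. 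You merge $D_1 \cup D_2$ into $A_x = \{|y| \leq |x|/2\}$ and replace the $D_1$/$D_2$ bookkeeping with a single application of dominated convergence (with the uniform domination $4^{\alpha\gamma} a(y)$, valid since $|x-y| \geq |x|/2$ on $A_x$), while your tail estimate on $A_x^c$ matches the paper's $I_3$ estimate essentially verbatim. Your version is shorter and cleaner; the paper's version is more elementary in that it never invokes DCT (effectively unrolling it by hand via the extra middle region $D_2$), which also makes explicit rates of convergence immediately legible if one wanted them. Both proofs use $\gamma > \frac{1}{2\alpha}$ for the tail (integrability of $a^\gamma$) and $\gamma < 1$ to make the tail contribution $o(a^\gamma(x))$, exactly as you note.
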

\begin{proof} 
	Take arbitrary $\delta\in(0,1)$, $\gamma\in (\frac{1}{2\alpha},1)$. 
	Let us consider, for $x$ such that $|x|>2|x|^\delta$, a disjoint decomposition $\R=D_1(x)\sqcup D_2(x)\sqcup D_3(x)$, where

\begin{gather*}
	D_1(x):= [-|x|^{\delta}, |x|^{\delta}], \quad D_2(x):= (-\frac{|x|}{2}, -|x|^\delta) \cup (|x|^\delta, \frac{|x|}{2} ),\\
 D_3(x)=(-\infty, -\frac{|x|}{2}] \cup [\frac{|x|}{2}, \infty).
\end{gather*}
Then, $\frac{(a*a^\gamma)(x)}{a^\gamma(x)}=I_1(x)+I_2(x)+I_3(x)$, where
\[
	I_j(x):=\int_{D_j(x)}a(y) \Big( \frac{1+|x|^2}{1+|x-y|^2}\Big)^{\alpha\gamma}dy, \quad j=1,2,3.
\]
Using the inequality $|x-y|\geq |x|-|y|\geq |x|-|x|^\delta$ for $y\in D_1(x)$, $|x|>2^{1-\delta}$, one has 
\begin{align*}
	I_1(x)\leq \biggl(\frac{1+|x|^2}{1+(|x|-|x|^\delta)^2}\biggr)^{\alpha\gamma}\int_{D_1(x)} a (y)dy\to 1, \quad |x|\to\infty.
\end{align*}
Next, we evidently have, for any $|y|<\frac{|x|}{2}$, that $1+|x-y|^2\geq 1+(|x|-|y|)^2 \geq\frac{1}{4}(1+|x|^2)$; therefore,
\begin{align*}
	I_2(x)\leq 4^{\alpha \gamma } \int_{\{|y| \geq |x|^\delta\}} a (y)dy\to0, \quad |x|\to\infty.
\end{align*}
Finally, $a(y) \leq \frac{c_\alpha}{(1+ \frac{x^2}{4})^{\alpha}}$ for $y \in D_3(x)$, hence  
\begin{align*}
  I_3(x)&\leq
	c_\alpha 4^\alpha \frac{(1+|x|^2)^{\alpha\gamma}}{\Bigl(4+|x|^2\Bigr)^{\alpha}}
	\int_{D_3(x)}\frac{1}{(1+|x-y|^2)^{ \alpha\gamma}}dy\\
&\leq c_\alpha c_{\alpha\gamma} 4^\alpha \Big(\frac{(1+|x|^2)^{\gamma}}{4+|x|^2}\Big)^{\alpha}\to 0, \quad |x|\to\infty.
	\end{align*}
	where $c_\alpha$ is the normalising constant defined in \eqref{stampede}.
	As a result \eqref{eq:a_conv_a_gamma} holds. The proof is completed.
\end{proof}

\begin{lem}\label{lem:ba:suff_cond1}
	Let $a$ be defined by \eqref{stampede} with $\alpha>\frac{1}{2}$, $\gamma \in(\frac{1}{2\alpha},1)$.
Then, for any $\delta\in(0,1)$, there exists $\la=\la(\delta,\gamma)>0$, such that
\[
	(a*\omega_\la)(x) \leq (1+\delta) \omega_\la(x), \qquad x\in\R, 
\]
where
 \begin{equation}\label{eq:defofomegala}
    \omega_\la(x):=\min\bigl\{\la,a^\gamma(x)\bigr\}, \quad x\in\X.
  \end{equation}
\end{lem}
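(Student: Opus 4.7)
The plan is to exploit Lemma \ref{lem:a_conv_agamma_equiv_agamma} which already provides the pointwise asymptotic $(a*a^\gamma)(x) \sim a^\gamma(x)$ as $|x|\to\infty$. The truncation by $\lambda$ is introduced precisely to take care of the region near the origin, where $a^\gamma$ is large and the ratio $(a*a^\gamma)/a^\gamma$ need not be close to $1$. So the proof will be a two-region estimate, and the choice of $\lambda$ will ensure that the regions match up correctly.

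\textbf{Step 1 (choosing $R$ from the asymptotic).} By Lemma \ref{lem:a_conv_agamma_equiv_agamma}, for any fixed $\delta\in(0,1)$ there exists $R=R(\delta,\gamma)>0$ such that
\[
(a*a^\gamma)(x)\leq(1+\delta)a^\gamma(x),\qquad |x|\geq R.
\]

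\textbf{Step 2 (choosing $\lambda$ to pin the plateau inside $B_R$).} Set $\lambda:=a^\gamma(R)=c_\alpha^\gamma(1+R^2)^{-\alpha\gamma}$. Since $a^\gamma$ is radial and strictly decreasing in $|x|$, the level set $\{a^\gamma\geq \lambda\}$ equals $B_R$. Hence $\omega_\lambda(x)=\lambda$ for $|x|\leq R$ and $\omega_\lambda(x)=a^\gamma(x)$ for $|x|>R$.

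\textbf{Step 3 (estimate on the two regions).} For $|x|\leq R$, since $\omega_\lambda\leq \lambda$ pointwise and $\int_\R a(y)dy=1$,
\[
(a*\omega_\lambda)(x)\leq \lambda = \omega_\lambda(x)\leq (1+\delta)\omega_\lambda(x).
\]
For $|x|>R$, since $\omega_\lambda\leq a^\gamma$ pointwise and $a\geq 0$, together with Step 1,
\[
(a*\omega_\lambda)(x)\leq (a*a^\gamma)(x)\leq (1+\delta)a^\gamma(x)=(1+\delta)\omega_\lambda(x).
\]
Combining the two cases yields the claim.

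Since Lemma \ref{lem:a_conv_agamma_equiv_agamma} does the analytical work, there is no real obstacle here; the only thing to be mindful of is choosing $\lambda$ small enough (equivalently $R$ large enough) so that the plateau region $\{\omega_\lambda=\lambda\}$ contains the entire set where the pointwise asymptotic has not yet kicked in. Any $\lambda\leq a^\gamma(R(\delta,\gamma))$ works; selecting equality gives the cleanest split.
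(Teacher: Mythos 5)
Your proof is correct and follows essentially the same route as the paper: the paper also splits $\R$ into the plateau set $\{a^\gamma \geq \lambda\}$ (where $(a*\omega_\lambda) \leq \lambda = \omega_\lambda$ via $\int a = 1$) and its complement (where $\omega_\lambda \leq a^\gamma$ and Lemma~\ref{lem:a_conv_agamma_equiv_agamma} give $(a*\omega_\lambda) \leq (a*a^\gamma) \leq (1+\delta)a^\gamma = (1+\delta)\omega_\lambda$). Your explicit choice $\lambda = a^\gamma(R)$ just makes the paper's existential claim concrete.
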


\begin{proof}
	For any $\la>0$, we define the set
  \begin{equation}\label{eq:defofsetOmegala}
    \Omega_\la:=\Omega_\la(\gamma):=\bigl\{x\in\X: a^\gamma(x)<\la\bigr\}.
  \end{equation}
 	By \eqref{eq:defofomegala}, for an arbitrary $\la>0$, we have $\omega_\la(x)\leq \la$, $x\in\X$; then $(a*\omega_\la)(x)\leq \la$, $x\in\X$, as well. In particular, cf.~\eqref{eq:defofomegala},
\begin{equation}\label{eq:ineq1213}
    (a*\omega_\la)(x)\leq
    \omega_\la(x), \quad x\in\X\setminus\Omega_\la.
  \end{equation}
	Next, by Lemma \ref{lem:a_conv_agamma_equiv_agamma}, for any $\delta>0$ there exists $\la=\la(\delta)\in(0,1)$ such that
	\[
		\sup_{x\in\Omega_\la}\dfrac{( a * a^\gamma)(x)}{a^\gamma(x)}\leq 1+\delta,
	\]
in particular,
\begin{equation*}
    (a* a^\gamma)(x)\leq (1+\delta) a^\gamma(x)=(1+\delta) \omega_\la(x), \quad x\in\Omega_\la.
\end{equation*}
  Therefore, for all $x\in\Omega_\la$,
  \begin{equation}\label{eq:ineq1214}
        (a*\omega_\la)(x)=(a*a^\gamma)(x)-\left( a*(a^\gamma-\omega_\la) \right)(x)\leq (1+\delta) \omega_\la(x),
  \end{equation}
  where we used the obvious inequality: $a^\gamma\geq \omega_\la$. By \eqref{eq:ineq1213} and \eqref{eq:ineq1214}, one gets the statement.
\end{proof}	

For a  function $\varpi:\X\to(0,+\infty)$, we define, for any $f:\X\to\R$,
\begin{equation}\label{eq:defofomeganorm}
  \lVert f\rVert_\varpi:=\sup_{x\in\X} \frac{|f(x)|}{\varpi(x)}\in[0,\infty].
\end{equation}

\begin{prop}[\protect{cf.~\cite[Propostion 3.1]{FKT2018}}] \label{prop:ba:bound_above_gen}
	Let $a$ be defined by \eqref{stampede} with $\alpha>\frac{1}{2}$, function $\varpi:\X\to(0,+\infty)$ be such that $a*\varpi$ is well-defined (for example, let $\varpi$ be bounded) and, for some $\nu\in(0,\infty)$,
\begin{equation} \label{eq:ba:Phi_est_suff_cond}
	\dfrac{(a*\varpi)(x)}{\varpi(x)}\leq\nu, \quad x\in\X.
\end{equation}
 Let $0\leq u_0\in L^\infty(\X)$ and $\normom{u_0}<\infty$; let $u=u(x,t)$ be the corresponding solution to \eqref{eq:basic_meso}. Then
\begin{equation} \label{eq:ba:u_est_above_goal}
  \normom{u(\cdot,t)}\leq \normom{u_0}e^{\nu t},\quad  t\geq0.
\end{equation}
\end{prop}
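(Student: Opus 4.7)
The plan is to produce an explicit supersolution of the form
\[
V(x,t) := \|u_0\|_\varpi\,\varpi(x)\,e^{\nu t},
\]
and conclude either by the comparison principle (Proposition \ref{prop:compar_princ}), or, equivalently, by a direct Gronwall estimate in the weighted norm \eqref{eq:defofomeganorm}.

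First, the initial inequality $V(x,0)\geq u_0(x)$ is immediate from the definition \eqref{eq:defofomeganorm} of $\|u_0\|_\varpi$. Second, since $\partial_t V = \nu V$, and, by linearity of convolution together with hypothesis \eqref{eq:ba:Phi_est_suff_cond},
\[
(a*V)(x,t) = \|u_0\|_\varpi\,e^{\nu t}\,(a*\varpi)(x) \leq \nu\,\|u_0\|_\varpi\,e^{\nu t}\,\varpi(x) = \nu V(x,t),
\]
one gets $\min\{(a*V)(x,t),1\}\leq (a*V)(x,t)\leq \partial_t V(x,t)$, i.e. $\mathcal{F}V\geq 0 = \mathcal{F}u$ with $\mathcal{F}$ from \eqref{eq:F_operator} and $Gu=\min\{a*u,1\}$. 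Since $G$ is monotone and Lipschitz (Remark \ref{rem:G_example}) and both $u,V\geq 0$, Proposition \ref{prop:compar_princ} yields $u(x,t)\leq V(x,t)$; dividing by $\varpi(x)>0$ and taking the essential supremum over $x$ then gives \eqref{eq:ba:u_est_above_goal}.

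The one point that requires care is that Proposition \ref{prop:compar_princ} as stated requires the upper function to be bounded, whereas $V$ need not be bounded when $\varpi$ is unbounded. A clean way to bypass this is to replace the comparison step by a Gronwall argument: from the integral form of \eqref{eq:basic_meso} and the pointwise bound $\min\{a*u,1\}\leq a*u$,
\[
0\leq u(x,t) \leq u_0(x) + \int_0^t (a*u(\cdot,s))(x)\,ds,\qquad x\in\R,\ t\geq 0.
\]
Setting $\phi(t):=\|u(\cdot,t)\|_\varpi$, the same estimate as above gives $(a*u(\cdot,s))(x)\leq \phi(s)(a*\varpi)(x)\leq \nu\phi(s)\varpi(x)$, so dividing by $\varpi(x)$ and taking the essential supremum over $x$ yields
\[
\phi(t)\leq \phi(0) + \nu\int_0^t \phi(s)\,ds,\qquad t\geq 0,
\]
and Gronwall's inequality then delivers $\phi(t)\leq \phi(0)\,e^{\nu t}$, which is \eqref{eq:ba:u_est_above_goal}. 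Local boundedness of $\phi$, needed for Gronwall, is the only routine issue: it is inherited from the fact that $u(\cdot,t)\in L^\infty(\R)$ depends continuously on $t$ (Proposition \ref{prop:exist_uniq}) together with $\|u_0\|_\varpi<\infty$, so no genuine obstacle arises.
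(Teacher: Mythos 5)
Your second (Gronwall) argument, which you present as the rigorous route, has a genuine gap precisely at the point you dismiss as routine. The inequality
\[
\phi(t)\leq \phi(0)+\nu\int_0^t\phi(s)\,ds
\]
and the subsequent Gronwall step both require that $\phi(s)=\normom{u(\cdot,s)}$ be finite (indeed measurable and locally bounded/integrable) on $[0,T]$ \emph{a priori}. This does not follow from continuity of $t\mapsto u(\cdot,t)$ in $L^\infty(\R)$ together with $\normom{u_0}<\infty$: the map $f\mapsto\normom{f}$ is in general not finite, let alone continuous, on $L^\infty(\R)$ when $\inf\varpi=0$. A perturbation of $u_0$ that is tiny in $\|\cdot\|_\infty$ but located where $\varpi$ is tiny can make $\normom{\cdot}$ jump to infinity; so one cannot deduce $\phi(t)<\infty$ for $t>0$ from $\phi(0)<\infty$ and $L^\infty$-continuity. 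This is exactly the a priori estimate that the proposition is supposed to produce, so assuming it is circular.

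The paper avoids this issue by working directly with the Picard iteration $\Phi_{u(\cdot,\tau)}$ from the existence proof (Proposition \ref{prop:exist_uniq}): it shows that the set $\{v:\normom{v(\cdot,t)}\leq\normom{u_0}e^{\nu t}\}$ is invariant under $\Phi$ (using the same convolution estimate you use), and passes the bound to the fixed point along the iterates, interval by interval. This yields the a priori weighted bound \emph{constructively}, without needing to assume it beforehand. Your Gronwall argument is the formal shadow of this, but as written it skips the step that actually does the work. Your first approach (the explicit supersolution $V=\normom{u_0}\varpi e^{\nu t}$ together with Proposition \ref{prop:compar_princ}) is sound precisely when $\varpi\in L^\infty$, as you correctly note; for the general statement you should either restrict to bounded $\varpi$, or replace the Gronwall step by the Picard-invariance argument (or first prove local finiteness of $\phi$ by a separate bootstrap, e.g.\ iterating the integral inequality over short time intervals starting from the constant-in-time function $u_\tau$, which is effectively the same thing).
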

\begin{proof}
For any $f:\X\to\R_+$, with $\|f\|_\varpi<\infty$, we have
\begin{align} \label{eq:ba:conv_ineq}
	\dfrac{\min\{(a*f)(x),1\}}{\varpi(x)} \leq&\, \dfrac{(a*f)(x)}{\varpi(x)} \leq \int_{\X}\dfrac{a(y)\varpi(x-y)}{\varpi(x)}\dfrac{|f(x-y)|}{\varpi(x-y)}dy \nonumber \\ 
	\leq&\, \dfrac{a*\varpi(x)}{\varpi(x)} \normom{f}.
\end{align}
By Proposition \ref{prop:exist_uniq} and \eqref{eq:Phi_def}, for any $0\leq\tau<\Tau$, we have that 
\[
	u(x,t)=(\Phi u)(x,t), \quad t\in[\tau,\Tau].
\]
where $\Phi = \Phi_{u(\cdot,\tau)}$.
Suppose that for some $\tau=(N-1)\delta$, $\delta\in(0,1)$, $N\in\N$, we have $\normom{u_\tau}\leq \normom{u_0}e^{\nu \tau}$.
Take any $v\in C([\tau,\Tau], L^\infty(\R,\R_+))$, $t\in [\tau,\Tau]$, $\Tau:=\tau+\delta$, $0\leq u_\tau\in L^\infty(\R)$ such that
\begin{equation} \label{eq:ba:v_est}
	\normom{v(\cdot,t)}\leq \normom{u_0}e^{\nu t},\quad t\in[\tau,\Tau].
\end{equation}
We will check the following inequality
\[
  \normom{(\Phi v)(\cdot,t)}\leq \normom{u_0}e^{\nu t}, \quad t\in[\tau,\Tau].
\]
By \eqref{eq:Phi_def}, \eqref{eq:ba:conv_ineq}, \eqref{eq:ba:v_est}, one gets, for $t\in[\tau,\Tau]$,
\begin{align*}
	0&\leq \dfrac{(\Phi v)(x,t)}{\varpi(x)} \leq \dfrac{u_\tau(x)}{\varpi(x)} + \int_\tau^{t}\frac{(a*v)(x,s)}{\varpi(x)}ds\\ 
	&\leq \normom{u_0} e^{\nu\tau} + \normom{u_0} \int_{\tau}^{t} \nu e^{\nu s}ds = \normom{u_0}e^{\nu t}.\end{align*}
Since, by the proof of Proposition \ref{prop:exist_uniq}, $u$ is the limiting function for the sequence $\Phi^n v$, $n\in\N$, and $u_\tau(x) = u(x,\tau)$, one gets the statement. 
\end{proof}

\begin{prop}\label{prop:est_above}
	Let $a$ be defined by \eqref{stampede} with $\alpha>\frac{1}{2}$, $u_0\in\EEp$, and $u$ is the corresponding solution to \eqref{eq:basic_meso}. Then for any $\eps>0$ the following statements hold,
	\begin{enumerate}
  	\item If $u_0(x) \preceq a(x)$ for $x\in\RR$, then there exists $t_0$, such that for all $t\geq t_0$, 
  		\begin{equation}\label{eq:est_u_0_integrable}
				u(x,t) \preceq e^{-\frac{\eps t}{2}},\quad  x\in (-\infty, -e^{\frac{1+\eps}{2\alpha}t})\cup(e^{\frac{1+\eps}{2\alpha}t},\infty).
  		\end{equation}
		\item If $u_0(x) \preceq \inte{x}{\infty} a(y)dy$ for $x\in\RR$, then there exists $t_0$, such that for all $t\geq 0$, 
  		\begin{equation}\label{eq:est_u_0_decreasing}
				u(x,t) \preceq e^{-\frac{\eps t}{2}},\quad  x\in (e^{\frac{1+\eps}{2\alpha-1}t}, \infty).
  		\end{equation}
  \end{enumerate}
\end{prop}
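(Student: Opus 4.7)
The overall plan is to apply Proposition \ref{prop:ba:bound_above_gen} with a carefully tailored weight $\varpi$ in each case, and then convert the resulting bound $u(x,t) \leq \|u_0\|_\varpi\,\varpi(x)\,e^{\nu t}$ into the desired pointwise estimate using the power-law decay of $\varpi$ at infinity. In both parts the parameter $\nu$ will be arbitrarily close to $1$ thanks to (an analog of) Lemma \ref{lem:ba:suff_cond1}.

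For part (1), I would take $\varpi = \omega_\lambda = \min\{\lambda, a^\gamma\}$ as in Lemma \ref{lem:ba:suff_cond1}, with $\gamma\in(\tfrac{1}{2\alpha},1)$ to be tuned. Since $a\leq c_\alpha$ implies $a\leq c_\alpha^{1-\gamma} a^\gamma$, the hypothesis $u_0\preceq a$ yields $\|u_0\|_{\omega_\lambda}<\infty$. Given $\delta>0$, Lemma \ref{lem:ba:suff_cond1} furnishes $\lambda$ with $(a*\omega_\lambda)/\omega_\lambda\leq 1+\delta$ on $\R$, so Proposition \ref{prop:ba:bound_above_gen} produces $u(x,t)\leq \|u_0\|_{\omega_\lambda}\,\omega_\lambda(x)\,e^{(1+\delta)t}$. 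For $|x|\geq e^{(1+\eps)t/(2\alpha)}$ and $t$ large, $\omega_\lambda(x)=a^\gamma(x)\leq C|x|^{-2\alpha\gamma}\leq C e^{-\gamma(1+\eps)t}$, and picking $\delta$ and $1-\gamma$ small enough so that $1+\delta-\gamma(1+\eps)\leq -\eps/2$ concludes \eqref{eq:est_u_0_integrable}.

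For part (2), let $\bar a(x):=\int_x^{\infty} a(y)\,dy$ and introduce
\begin{equation*}
\psi_\lambda(x) := \min\bigl\{\lambda,\,\bar a(x)^\gamma\bigr\}, \qquad \gamma\in(0,1).
\end{equation*}
The function $\bar a$ is bounded by $1$, tends to $1$ as $x\to-\infty$, and decays like $x^{-(2\alpha-1)}$ at $+\infty$. The hypothesis $u_0(x)\preceq\bar a(x)$, together with $\bar a\leq \bar a^\gamma$ (since $0\leq\bar a\leq 1$), gives $\|u_0\|_{\psi_\lambda}<\infty$. The crucial technical ingredient is the analog of Lemma \ref{lem:a_conv_agamma_equiv_agamma}, namely
\begin{equation*}
\lim_{x\to+\infty} \frac{(a*\bar a^\gamma)(x)}{\bar a^\gamma(x)} = 1,
\end{equation*}
to be proved by the same three-region decomposition $D_1\sqcup D_2\sqcup D_3$. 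Granted this, the argument of Lemma \ref{lem:ba:suff_cond1} applies verbatim with $\psi_\lambda$ in place of $\omega_\lambda$: on $\{\bar a^\gamma<\lambda\}$ use the above limit, and on the complement $\psi_\lambda\equiv\lambda$, so $(a*\psi_\lambda)/\psi_\lambda\leq 1$ trivially. Proposition \ref{prop:ba:bound_above_gen} then yields $u(x,t)\leq\|u_0\|_{\psi_\lambda}\,\psi_\lambda(x)\,e^{(1+\delta)t}$, and the tail bound $\psi_\lambda(x)\leq Cx^{-(2\alpha-1)\gamma}$ for $x\geq e^{(1+\eps)t/(2\alpha-1)}$, combined with the same parameter tuning as in part (1) (with $2\alpha-1$ in place of $2\alpha$), delivers \eqref{eq:est_u_0_decreasing}.

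The main obstacle is the asymmetry of $\bar a$: unlike $a^\gamma$, the weight $\bar a^\gamma$ does not decay as $x-y\to-\infty$, so in the far region $\{y\geq x/2\}$ the factor $\bar a(x-y)^\gamma$ can be as large as $1$ and cannot be dominated by its value near $x$. I would handle this by the cruder bound
\begin{equation*}
\int_{x/2}^{\infty} a(y)\bar a(x-y)^\gamma\,dy \;\leq\; \bar a(x/2) \;\sim\; C x^{-(2\alpha-1)},
\end{equation*}
observing that this is $o\bigl(\bar a^\gamma(x)\bigr)=o(x^{-(2\alpha-1)\gamma})$ precisely because $\gamma<1$; the symmetric region $\{y\leq -x/2\}$ yields the same estimate since $a(y)\leq C|y|^{-2\alpha}$ there, while the two inner regions $D_1,D_2$ are essentially identical to those in Lemma \ref{lem:a_conv_agamma_equiv_agamma}, using monotonicity of $\bar a$ to bound $\bar a(x-y)^\gamma/\bar a(x)^\gamma$ by $1$ for $y<0$ and by a constant for $0<y<x/2$.
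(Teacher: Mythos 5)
Your part (1) is essentially identical to the paper's: weight $\omega_\lambda = \min\{\lambda, a^\gamma\}$, Lemma \ref{lem:ba:suff_cond1}, Proposition \ref{prop:ba:bound_above_gen}, and the same parameter tuning.

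Your part (2) takes a genuinely different route that is correct but heavier. You build a fresh weight $\psi_\lambda = \min\{\lambda, \bar a^\gamma\}$ with $\bar a(x)=\int_x^\infty a(y)\,dy$ and re-prove the asymptotic relation $(a*\bar a^\gamma)(x)/\bar a^\gamma(x)\to 1$ as $x\to+\infty$ by redoing the three-region decomposition of Lemma \ref{lem:a_conv_agamma_equiv_agamma}. Your treatment of the asymmetric far region via $\int_{x/2}^\infty a(y)\bar a(x-y)^\gamma\,dy\le\bar a(x/2)=o(\bar a(x)^\gamma)$ is the right fix, and the rest of the argument (bound on $\Omega_\lambda$ via the limit, trivial bound $(a*\psi_\lambda)/\psi_\lambda\le 1$ off $\Omega_\lambda$) mirrors Lemma \ref{lem:ba:suff_cond1}, so the scheme goes through. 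The paper instead avoids any new asymptotic analysis: it takes the already-established inequality $(a*\omega_\lambda)\le(1+\delta)\omega_\lambda$ from Lemma \ref{lem:ba:suff_cond1} and integrates it, defining $\omega(x)=\int_x^\infty\omega_\lambda(y)\,dy$ and observing that $(a*\omega)(x)=\int_x^\infty(a*\omega_\lambda)(y)\,dy\le(1+\delta)\omega(x)$ because convolution and the tail integral commute. That single line replaces your re-proved asymptotic lemma. The resulting weights differ slightly in exponent ($\omega(x)\sim x^{1-2\alpha\gamma}$ versus $\psi_\lambda(x)\sim x^{-(2\alpha-1)\gamma}$), but both dominate $\bar a$ and both give an exponent tending to $-\eps$ as $\gamma\to 1$, $\delta\to 0$, so either choice closes the argument. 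What your version buys is symmetry between parts (1) and (2); what the paper's version buys is brevity, since it reuses the convolution estimate instead of re-deriving one.
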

\begin{proof}
	We start with proving the first statement. Recall that $\omega_\la(x) = \min\{a^\gamma(x),\la\},\ x\in\R$, for $\gamma\in (\frac{1}{2\alpha},1)$.
	By Lemma \ref{lem:ba:suff_cond1} and Proposition \ref{prop:ba:bound_above_gen}, for any $\delta\in(0,1)$ there exists $\la>0$  such that, for $\omega := \omega_\la$,
	\[
		u(x,t) \leq \normom{u_0}e^{(1+\delta)t} \min\{a^\gamma,\la\}, \qquad x\in\R,\ t\geq0.
	\]
	Then for $t_0$, such that $a^\gamma(e^{\frac{1+\eps}{2\alpha}t_0})\leq\lambda$, and for all $t\geq t_0$, $|x|\geq e^{\frac{1+\eps}{2\alpha}t}$,
	\[
		u(x,t) \leq c_\alpha \normom{u_0} \frac{e^{(1+\delta)t}} { \big(1+e^{\frac{1+\eps}{\alpha}t}\big)^{\alpha\gamma} } \leq c_\alpha \normom{u_0} e^{(1+\delta-\eps\gamma-\gamma)t}.
	\]
	where the first inequality holds by \eqref{stampede}.
	Hence it suffices to choose $\gamma \in( \frac{1}{\min\{2,2\alpha\}},1)$, $\delta\in(0, \eps(\gamma-\frac{1}{2}))$ and redefine $t_0$ such that $c_\alpha \normom{u_0} e^{(1+\delta-\eps\gamma-\gamma+\frac{\eps}{2})t_0}\leq 1$.

	To prove the second statement we note that, by Lemma \ref{lem:ba:suff_cond1}, for any $\delta\in(0,1)$, there exists $\la>0$, such that for $\omega_\la(x) = \min\{\la, a^\gamma(y)\}$, $\omega(x) = \int[x][\infty]\omega_\la(y)dy$, 
	\[
		(a*\omega)(x) = \int[x][\infty] (a*\omega_\la)(y)dy \leq (1+\delta) \int[x][\infty]\omega_\la(y)dy = (1+\delta) \omega(x), \qquad x\in\R.
	\]
	Hence Proposition \ref{prop:ba:bound_above_gen} may be applied. The rest of the proof is analogous to the first part. The proof is completed.
\end{proof}
%

Now we can prove the main result.
\begin{proof}[Proof of Theorem \ref{thm:est_front}]

	We prove the first part of the theorem.
	Let $v$ solve \eqref{eq:basic_meso} with $v(x,0) =v_0(x) = \min\{u_0,\frac{1}{2}\}$.
	By Proposition \ref{prop:compar_princ}, for fixed $t_0\in(0,T)$, $T:= \inf \{t:\|v(\cdot,t)\|_\infty\geq1\}$,
	\[
		(a*v_0)(x) \preceq \sum_{j\geq0} \frac{t_{0}^j A^j}{j!} v_0(x) = v(x,t_0) \leq u(x,t_0), \quad x\in\R,
	\]
	where $Af:=a*f$.
	Hence, by the first part of Lemma \ref{lem:conv_of_long_tail_and_o_small} applied to $v_0$, and since $u(x,t)$ is increasing in $t$, there exists $R>0$ such that	
	\begin{equation}\label{eq:some_est_on_u}
		|x|^{-2\alpha} \preceq u(x,t),\quad |x|\geq R,\ t\geq t_0.
	\end{equation}

	By \eqref{eq:some_est_on_u} and Lemma \ref{lem:hair_trigger_effect}, there exists $\tau_1\geq t_0$ such that 
	\[
		\min \bigl\{ 1, |x|^{-2\alpha}e^{(1-\frac{\eps}{2})(\tau_0+1)} \bigr\} \preceq u(x,\tau_1),\quad x\in\RR,
	\]
	where $\tau_0$ is defined in Lemma \ref{lem:subsol}.
	Hence, by Proposition \ref{prop:compar_princ} and Lemma \ref{lem:subsol}, there exits $\la\in(0,1)$, such that 
	\[
		\la g(x,t+\tau_0) = \lim_{l\to0} \frac{1}{l} \int[t][t+l] \la g(x,s+\tau_0) ds = \lim_{l\to0} \la G(x,t+\tau_0,l) \leq  u(x,t+\tau_1),\quad x\in\RR,\ t\geq0,
	\]
	where $g$ is defined by \eqref{eq:def_g_and_h} with $\frac{\eps}{2}$ instead of $\eps$ and we used, by the monotonicity of $g$ in $t$, $\la G(x,\tau_0,l)\leq \la g(x,\tau_0+1) \leq u(x,\tau_1)$, $x\in\R$, $l\in(0,1)$.

	By Lemma \ref{lem:hair_trigger_effect} and \eqref{eq:Z_Q_are_commutative}, for any $n>0$ there exists $t_n$ such that $u_0(x) \geq \la$, for $x\in B_1(x_0)$, yields $u(x,t+t_n)\geq n$, for $x\in B_1(x_0)$, $t\geq0$.
	Hence, for $t\geq \frac{2-2\eps}{\eps}(\tau_1+t_n)$,
	\[
		u(x,t+\tau_1+t_n) \geq n,\quad x\in \{x:|x|^{-2\alpha} e^{(1-\eps)(t+\tau_1+t_n)}\geq 1\}, 
	\]
	since $\{x:|x|^{-2\alpha} e^{(1-\eps)(t+\tau_1+t_n)}\geq 1\} \subset \{x:\la g(x,t+\tau_0)\geq \la\} =\{x:|x|^{-2\alpha} e^{(1-\frac{\eps}{2})t}\geq 1\}$.
	On the other hand by Proposition \ref{prop:est_above} there exits $\tau \geq t_n + \tau_1$ such that 
	\[
		u(x,t) \leq \frac{1}{n},\quad x\in \{x:|x|^{-2\alpha} e^{(1+\eps)t}\leq 1\}.
	\]
	As a result \eqref{eq:front_position_u_0_integrable} is proved. 

	Let us prove \eqref{eq:front_position_u_0_decreasing}.	
	Let $v$ solve \eqref{eq:basic_meso} with $v_0(x) := v(x,0)\not\equiv 0$ such that $v_0 \in C^\infty(\R)$ is decreasing and $v_0 \leq \min\{u_0,\frac{1}{2}\}$. As before, 
	\[
		(a*v_0)(x) \preceq v(x,t_0) \leq u(x,t_0), \quad x\in\R. 
	\]
	Similarly to \eqref{eq:some_est_on_u}, by the second part of Lemma \ref{lem:conv_of_long_tail_and_o_small},
	\begin{equation}\label{eq:one_more_est_on_u}
		x^{-2\alpha+1} \preceq u(x,t), \qquad x\geq R,\ t\geq t_0.
	\end{equation}
	By Corollary \ref{cor:Z_Q_are_commutative} and since $v_0$ is decreasing, then $v(\cdot,t)$ is decreasing in $x$, for all $t\geq0$. Therefore by Proposition \ref{prop:compar_princ} and Lemma \ref{lem:hair_trigger_effect}, for any $r\in\R$,
	\begin{equation}\label{eq:hair_trigger_monotone}
		\infty = \lim_{t\to\infty} \inf_{x\leq r} v(x,t) \leq \lim_{t\to\infty} \inf_{x\leq r} u(x,t).
	\end{equation}
	By \eqref{eq:one_more_est_on_u} and \eqref{eq:hair_trigger_monotone} there exists $\tau_1\geq t_0$, such that
	\[
		\1_{\R_-}(x) + \min\{ 1, x^{-2\alpha+1}e^{(1-\frac{\eps}{2})(\tau_0+1)} \1_{\R_+}(x) \}\preceq u(x,\tau_1), \quad x\in\R.
	\]
	Hence,
	\[
		\la h(x,t+\tau_0) \leq u(x,t+\tau_1),\quad x\in\RR,\ t\geq0,
	\]
	where $h$ is defined by \eqref{eq:def_g_and_h} with $\frac{\eps}{2}$ instead of $\eps$.
	The rest of the proof runs as before.
\end{proof}

\section*{Acknowledgements}

Pasha Tkachov (PT) wishes to express his gratitude to the 
``Bielefeld Young Researchers'' Fund for the support through the Funding Line Postdocs: ``Career Bridge Doctorate -- Postdoc''.
VB and  Tyll Krueger (TK)  are grateful for the support of the 
ZIF Cooperation group "Multiscale modelling of tumor evolution, progression and growth",
and
Wroclaw University of  Science and Technology, Faculty of Electronics.
TK is grateful for the support of the University of Verona 
during his visit.
TK is also supported by the National Science Center in Poland (NCN)
through grant 2013/11/B/HS4/01061: ``Agent based modeling of innovation diffusion''.
The authors would like to thank the anonymous referee for the valuable comments which helped to
improve the manuscript.

\bibliographystyle{alphaSinus}
\bibliography{Sinus}

\end{document}